\DeclareMathAlphabet{\pazocal}{OMS}{zplm}{m}{n}
\def\smallsection#1{\smallskip\noindent\textbf{#1}.}
\numberwithin{equation}{section}
\newcommand{\HH}{{\pazocal{H}}}
\newcommand{\R}{\mathbb{R}}
\newcommand{\Id}{\operatorname{Id}}
\newcommand{\w}{\omega}
\newcommand{\dd}[2]{\dfrac{\partial #1}{\partial #2}}
\newcommand{\comp}{{\operatorname{comp}}}
\newcommand{\epsi}{\varepsilon}
\newcommand{\WF}{{\operatorname{WF}}}
\newcommand{\Ree}{{\operatorname{Re}}}
\newcommand{\I}{{\operatorname{I}}}
\newcommand{\II}{{\operatorname{II}}}
\newcommand{\III}{{\operatorname{III}}}
\newcommand{\Ha}{{\operatorname{H}}}
\newcommand{\WB}{{\pazocal{W}\pazocal{B}}}
\newcommand{\ttv}{{v_{\operatorname{toy}}}}
\newcommand{\ttsquare}{{\square_{\operatorname{toy}}}}
\newcommand{\Hz}{\hat{z}}
\newcommand{\lr}[1]{\left\langle #1 \right\rangle}
\newcommand{\blr}[1]{\left\langle #1 \right\rangle}
\newcommand{\oY}{{\overline{Y}}}
\newcommand{\FF}{\pazocal{F}}
\newcommand{\supp}{\mathrm{supp}}
\newcommand{\Z}{\mathbb{Z}}
\newcommand{\Vv}{\mathbb{V}}
\newcommand{\tK}{\tilde{K}}
\newcommand{\DD}{\pazocal{D}}
\newcommand{\CC}{\pazocal{C}}
\newcommand{\Hh}{\mathbb{H}}
\newcommand{\ap}{{\operatorname{ap}}}
\newcommand{\MM}{\pazocal{M}}
\newcommand{\SSS}{\pazocal{S}}
\newcommand{\Hr}{{\hat{r}}}
\newcommand{\C}{\mathbb{C}}
\newcommand{\Ss}{\mathbb{S}}
\newcommand{\az}{\alpha}
\newcommand{\p}{\partial}
\newcommand{\te}{\theta}
\newcommand{\vp}{\varphi}
\newcommand{\HT}{{\hat{T}}}
\newcommand{\BB}{{\pazocal{B}}}
\newcommand{\EE}{\pazocal{E}}
\newcommand{\UU}{\pazocal{U}}
\newcommand{\tchi}{{\tilde{\chi}}}
\newcommand{\loc}{{\operatorname{loc}}}
\newcommand{\scatt}{{\operatorname{scatt}}}
\newcommand{\Ht}{{\hat{t}}}
\newcommand{\Haw}{{\operatorname{Haw}}}
\newcommand{\HSS}{\widehat{\SSS}}
\newcommand{\Ee}{\mathbb{E}}
\newcommand{\vac}{{\operatorname{vac}}}
\newcommand{\TTT}{\mathcal{T}}
\newcommand{\Ime}{{\operatorname{Im} \ }}
\newcommand{\system}[1]{\left\{\begin{matrix}
#1
\end{matrix} \right.}
\newcommand{\tv}{{\tilde{v}}}
\newcommand{\oZ}{\overline{Z}}
\newcommand{\matrice}[1]{\left(\begin{matrix}
#1
\end{matrix} \right)}
\newcommand{\HHS}{{\widehat{\SSS}}}
\newcommand{\Hhh}{{\mathcal{H}}}
\newcommand{\AAA}{{\mathcal{A}}}
\newcommand{\Mmm}{{\mathcal{M}}}
\title[A quantitative version of Hawking radiation]{A quantitative version of Hawking radiation}
\author{Alexis Drouot}
\email{alexis.drouot@gmail.com}
\address{Universit\'e Joseph Fourier and University of California, Berkeley.}
\newtheorem{thm}{Theorem}
\newtheorem{lem}{Lemma}[section]
\newtheorem{proposition}[lem]{Proposition}
\newtheorem{theorem}[thm]{Theorem}
\begin{document}

\maketitle

\begin{abstract} We present a proof of the existence of the Hawking radiation for massive bosons in the Schwarzchild-de Sitter metric. It provides estimates for the rates of decay of the initial quantum state to the Hawking thermal state. The arguments in the proof include a construction of radiation fields by conformal scattering theory; a semiclassical interpretation of the blueshift effect; and the use of a WKB parametrix near the surface of a collapsing star. The proof does not rely on the spherical symmetry of the spacetime.
\end{abstract}

\section{Introduction} 

The goal of this paper is to describe mathematically the Hawking radiation in the context of a Klein-Gordon field outside a Schwarzchild-de Sitter black hole. Hawking's celebrated prediction states that for outside observers, a black hole appears to emit particles at temperature
\begin{equation*}
T_\Haw = \dfrac{\hbar c^3\kappa_-}{2\pi G k_B} .
\end{equation*}
Here $\kappa_-$ is the surface gravity of the black hole horizon, $\hbar, c, G$ are the three universal constants, and $k_B$ is the Boltzman constant. The discovery of this effect goes back to Hawking's seminal work \cite{Hawking}. This result has had a remarkable effect on the scientific community and has generated since a lot of work.

The first mathematical treatment of the Hawking radiation goes back to a remarkable series of papers of Bachelot \cite{Bachelot0},  \cite{Bachelot2}, \cite{Bachelot1}. This pioneering work describes the apparent emission of non-interacting massive bosonic particles by a spherical black hole. Later Bachelot \cite{Bachelot3} and Melnyk \cite{Melnyk1} studied the emission of fermions by spherical black holes. Also in the case of fermions in a spherical background Melnyk \cite{Melnyk2} showed that the convergence to the Hawking thermal state still occurs while starting with a state at a positive temperature. This is an important physical case -- see Gibbons-Hawking \cite{GibHaw}. The first, and so far the only description of the Hawking radiation in a non-spherical background is due to H\"afner \cite{Hafner}. There the author describes the emission of fermions by Kerr black holes. More recently Bouvier and G\'erard \cite{BouGer} have proposed a toy model studying the Hawking radiation in the case of interacting fermions.

Our goal here is to develop techniques to study the Hawking radiation in a more robust way, which could handle non-symmetric black holes. We investigate the emission of massive spin-$0$ particles by spherical black holes with positive cosmological constant, but we do not rely on the separation of variables that was used in the works cited above. We also avoid the use of compactness arguments, and we privilege geometric aspects. As a result, the proof we give is constructive and we recover a quantitative version of the Hawking radiation. It is shown that as a star collapses to a black hole the initial quantum state converges to the thermal Hawking state exponentially fast. This dramatically improves the rate of convergence to the equilibrium. Note that this result does not apply either to massless fields nor to spaces with vanishing cosmological constant.

This work has been made possible only because of the spectacular growth of interest in the decay of waves on black holes spacetime, see \cite{FKSY06},  \cite{DafRod1}, \cite{FKSY08Err}, \cite{AnBl}, \cite{BonHaf}, \cite{DafRod2}, \cite{Dyatlov1}, \cite{Dyatlov2}, \cite{TT11}, \cite{Dyatlov3}, \cite{Vasy}, \cite{DRS14}, \cite{Ga14a}, \cite{Ga14b}, \cite{MelSaBVas} and references given there. In the case of a non-vanishing cosmological constant the culminating paper is the groundbreaking \cite[Theorem 1.4]{Vasy}. This theorem provides a way to obtain exponential decay for a broad class of perturbations of the Schwarzchild-de Sitter metric. It happens to be crucial to get exponential decay to the thermal state in the framework of Hawking radiation, as achieved here in the case of Schwarzchild-de Sitter black holes. As it was proved that such black holes admit many resonances -- see \cite{SaZw} -- one cannot beat the exponential convergence.

\subsection{Results} Let $\R \times \R \times \Ss$ be the Schwarzchild-de Sitter spacetime (denoted by SdS below) provided with its standard Lorentzian metric and $\square$ be the d'Alembertian operator on SdS -- see \S \ref{sec:1} for precise definitions. A collapsing star spacetime is the submanifold (with boundary) $\MM$ of SdS given by
\begin{equation*}
\MM = \{ (t,x,\w) \in \R^+ \times \R \times \Ss, \ x \geq z_*(t) \}.
\end{equation*}
Here $t \in \R^+ \mapsto z_*(t)$ is a smooth decreasing function with $z_*(t) = -t-A_0 e^{-2\kappa_-t} + O_{C^\infty}(e^{-4\kappa_-t})$ and $z_*(0) < 0$. The surface of the collapsing star is given by $\p\MM=\{(t,x,\w), \ x=z_*(t)\}$. For physical motivations see \S \ref{sec:1}. For $u_0, u_1 \in C^\infty_0(\R \times \Ss)$ and $T$ large enough let $u$ (depending on $T$) be the solution of the Klein-Gordon (KG) boundary value problem
\begin{equation}\label{eq:hgc}
\system{  (\square+m^2) u = 0, \\ \ u|_{\p \MM} = 0, \\ \ u(T) = u_0, \ \p_t u(T) = u_1. }
\end{equation}
Here $m$ is a \textit{positive} number. As explained in \cite{Bachelot1} and reviewed in \S \ref{sub:func} in order to give a mathematical description of the Hawking radiation it suffices to study the behavior of $u|_{t=0}$ on the surface $\Sigma_0=[z_*(0),\infty) \times \Ss$ as $T \rightarrow +\infty$. We define the space of scattering fields $X_\scatt$ as the set of functions $v \in C^\infty(\R \times \Ss)$ such that
\begin{equation*}\begin{gathered}
\exists C \in \R, \ \supp(v) \subset [C,\infty) \times \Ss \text{ and }  \\
\exists \nu > 0, \ v(x,\w) = O_{C^\infty(\R \times \Ss)}\left(e^{-\nu \lr{x}}\right).
\end{gathered}
\end{equation*}
The following theorem describes the behavior of the solution of \eqref{eq:hgc} as $T \rightarrow +\infty$:

\begin{theorem}\label{thm:asymp} Let $u_0, u_1 \in C_0^\infty(\R \times \Ss)$ and $u$ be the solution of \eqref{eq:hgc}. There exists $u_\pm^*\in X_\scatt$ such that on $\Sigma_0$,
\begin{equation*}\begin{gathered}
u(0,x,\w) = \dfrac{r_-}{r} u^*_-\left( \kappa_-^{-1} \ln\left( \dfrac{\gamma_0x}{e^{-\kappa_-T}} \right),\w \right) +  u^*_+(x-T,\w) + \epsi_0(x,\w) \\
\p_tu(0,x,\w) = \dfrac{r_-}{\kappa_- r  x} (\p_xu^*_-)\left( \kappa_-^{-1} \ln\left( \dfrac{\gamma_0x}{e^{-\kappa_-T}} \right),\w \right) + (\p_xu^*_+)(x-T,\w) + \epsi_1(x,\w), \end{gathered}
\end{equation*}
where $\gamma_0 < 0$ and $\epsi_0, \epsi_1$ both belong to $C^\infty(\Sigma_0)$ with
\begin{equation*}\begin{gathered}
|\epsi_0|_{H^{1/2}([z_*(0),1] \times \Ss)} = O(e^{-cT}), \ \ \ \ |\epsi_0|_{C^\infty([0,\infty) \times \Ss)} = O(e^{-cT}),\\
|\epsi_1|_{H^{-1/2}([z_*(0),1] \times \Ss)} = O(e^{-cT}), \ \ \ \ |\epsi_1|_{C^\infty([0,\infty) \times \Ss)} = O(e^{-cT}). \end{gathered}
\end{equation*}
\end{theorem}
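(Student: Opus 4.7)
The plan is to propagate the data at time $T$ backward to $t=0$ while tracking two spatially separated pieces: one that escapes to the cosmological horizon and produces the free outgoing wave $u^*_+(x-T,\w)$, and one that is blueshifted by reflection off the receding surface $\p\MM$ and produces the logarithmic-phase term built from $u^*_-$. The two main analytic inputs are the exponential local energy decay on the ambient SdS spacetime following \cite{Vasy}, and a WKB parametrix adapted to the moving boundary $\p\MM$.

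First I would solve the Cauchy problem on full SdS (no boundary): let $v$ satisfy $(\square+m^2)v=0$ with the same data at $t=T$. The resonance expansion of \cite{Vasy} decomposes $v|_{t=0}$ into an outgoing radiation field at the cosmological horizon plus an incoming radiation field at the black-hole horizon, modulo a remainder of size $O(e^{-cT})$ in every Sobolev norm, because the non-zero resonances have imaginary part below $-c$. The outgoing piece is exactly of the form $u^*_+(x-T,\w)$, which identifies $u^*_+$ as the cosmological-horizon radiation field of $(u_0,u_1)$.

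Next I would handle the boundary condition on $\p\MM$. The difference $u-v$ is emitted by the star's surface; by finite speed of propagation and the fact that $z_*(t)=-t-A_0 e^{-2\kappa_- t}+O_{C^\infty}(e^{-4\kappa_- t})$ hugs the null line $x=-t$, only the part of $v$ microlocalized near that null line is relevant before time $0$. In a neighborhood of this line I would construct a WKB parametrix for the boundary value problem of the form $u\sim (r_-/r)\, U_h(\phi(t,x),\w)$ with semiclassical parameter $h=e^{-\kappa_- T}$ and $U_h$ a placeholder for $u^*_-$. The null eikonal equation $\p_t\phi+\p_x\phi=0$ together with the Dirichlet condition $\phi(t,z_*(t))=\text{cst}$ and the asymptotic form of $z_*(t)$, after integration along the bicharacteristic striking $\p\MM$, produces the logarithmic phase $\phi(t,x)=\kappa_-^{-1}\ln(\gamma_0 x e^{\kappa_- T})$ with $\gamma_0<0$ computable from $A_0$ and $\kappa_-$; the transport equation along the same bicharacteristic yields the amplitude $r_-/r$. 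This is the quantitative blueshift mechanism.

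Finally I would patch the two constructions and read off the error bounds. Away from $x=0$ the WKB contribution is infinitely smoothing, so combined with the scattering description one recovers $C^\infty$ control of the remainder, explaining the $C^\infty([0,\infty)\times\Ss)$ bound on $\epsi_0,\epsi_1$. Near the horizon the parametrix oscillates arbitrarily fast as $T\to\infty$, which is why the remainder is only controlled in $H^{\pm1/2}$ on $[z_*(0),1]\times\Ss$. I expect the main obstacle to be the quantitative control of this parametrix through the reflection on the moving boundary together with the singular behavior of the phase as $x\to 0$: this requires semiclassical propagation of singularities matching the WKB parametrix to the scattering description in an intermediate shell, and careful bookkeeping of the $C^\infty$ expansion of $z_*(t)$ to pin down the prefactor $r_-/r$ and the constant $\gamma_0$.
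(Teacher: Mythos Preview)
Your high-level strategy---split the backward solution into a freely outgoing piece giving $u^*_+(x-T,\w)$ and a reflected piece giving the logarithmic-phase term, with the exponential decay of \cite{Vasy}/\cite{MelSaBVas} controlling the remainder---is correct and matches the paper's. The construction of $u^*_\pm$ as horizon radiation fields is also essentially what the paper does (via conformal scattering in \S\ref{subsec:1a}).

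The genuine gap is in your WKB step. You propose to build the parametrix directly in the Regge--Wheeler coordinates $(t,x)$, solving the eikonal equation along the bicharacteristic that hits $\BB$ and reflects. In these coordinates the reflection and the subsequent propagation back to $t=0$ take place over a time interval of order $T\sim\kappa_-^{-1}\ln(1/h)$, i.e.\ the Ehrenfest time for the semiclassical parameter $h=e^{-\kappa_-T}$. Standard WKB parametrices are not known to be accurate on this time scale, and the paper explicitly points out (\S 3.6) that neither local parametrices nor global FIO theorems such as \cite[Theorem~11.12]{Zworski} apply here; this is precisely why Bachelot's original argument in $\SSS_*$ does not yield a rate. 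To obtain the quantitative $O(e^{-cT})$ errors you need a device that reduces the parametrix construction to a \emph{bounded} time interval.

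The paper's resolution is to pass to the compactified coordinate system $\HHS=(\Ht,\Hr,\w)$ of \S\ref{subsec:coo}--\S\ref{sec:hg}, in which the star surface reaches the horizon at a \emph{finite} time $\Ht_\BB$. In these coordinates the logarithmic phase is not produced by the reflection at all: it is already present in the free backward solution at $\Ht=\Ht_\BB$ as the semiclassical blueshift (Lemma~\ref{lem:lalala}), namely $u\approx u_-\bigl(\kappa_-^{-1}\ln((r-r_-)/h),\w\bigr)$. The reflection is then handled in two short steps: a constant-coefficient toy model $\ttsquare$ near the corner $(\Ht_\BB,r_-)$ (Proposition~\ref{prop:1}), followed by a WKB parametrix with linear phase $\Hr+\Ht$ over the fixed interval $[0,\Ht_\BB]$ (Proposition~\ref{prop:jG4}). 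The three pieces are glued via an auxiliary spacelike hypersurface $\Sigma$ and a three-way cutoff $\chi_\I+\chi_{\II}+\chi_{\III}=1$ (\S\ref{sec:Haw}), with the middle piece killed by Proposition~\ref{prop:res2}. Your intuition about the origin of the factor $r_-/r$ (transport equation) and of $\gamma_0<0$ (geometry of the boundary) is correct, but the bookkeeping lives in $\HHS$, not in $\SSS_*$.
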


This theorem shows the difference between the propagation of KG fields in SdS and the propagation of KG fields in $\MM$: as the time $T$ goes to $+\infty$, a part of $u$ radiates to the cosmological horizon $x=+\infty$ while another part reflects off $\p \MM$ and produces a high frequency term. The latter is responsible for the apparent emission of bosons by the black hole resulting from the collapse. In the following result $\Ee_{\Ha_0,\tau}$ (resp. $\Ee^\pm_{D_x^2,\tau}$) stands for the generating functional of a state at temperature $\tau$ with respect to the KG Hamiltonian $\Ha_0$ (resp. $D_x^2$) in the collapsing star spacetime (resp. near the black hole/cosmological horizons) -- see \S \ref{sub:func} for precise definitions.

\begin{theorem}\label{thm:haw} There exists $\Lambda_0 > 0$ such that for all $0 < \Lambda < \Lambda_0$ and under the assumptions and notations of Theorem \ref{thm:asymp},
\begin{equation*}\begin{gathered}
\Ee_{\Ha_0,\kappa_+/(2\pi)}[u(0),\p_tu(0)] \\ = \Ee^+_{D_x^2,\kappa_+/(2\pi)} [u^*_+,D_x u^*_+] \cdot \Ee^-_{D_x^2,\kappa_-/(2\pi)} [u^*_-,D_x u^*_-] \cdot \left(1 + O(e^{-cT})\right), \ \  T \rightarrow \infty.
\end{gathered}
\end{equation*}
\end{theorem}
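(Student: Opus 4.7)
The plan is to substitute the asymptotic expansion of Theorem \ref{thm:asymp} into $\Ee_{\Ha_0,\kappa_+/(2\pi)}[u(0),\p_tu(0)]$, to decouple the two resulting contributions (one concentrated near the black hole horizon, one escaping toward the cosmological horizon), and to identify each with its claimed thermal counterpart. The generating functional $\Ee_{\Ha_0,\kappa_+/(2\pi)}$ is continuous with respect to the energy norm $H^{1/2}\oplus H^{-1/2}$ on compact sets and with respect to a weighted $C^\infty$ norm at infinity, so the remainder terms $\epsi_0,\epsi_1$ of Theorem \ref{thm:asymp} contribute only a multiplicative factor $1+O(e^{-cT})$. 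From now on I may work with the two explicit pieces
\begin{equation*}
v_-(x,\w) = \dfrac{r_-}{r}\, u_-^*\left( \kappa_-^{-1}\ln\left( \dfrac{\gamma_0 x}{e^{-\kappa_-T}}\right),\w\right), \qquad v_+(x,\w) = u_+^*(x-T,\w),
\end{equation*}
and their time derivatives at $t=0$.

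Next I would decouple these two pieces. The support of $v_-$ is essentially contained in a neighbourhood of $x=0$ of width $O(e^{-\kappa_-T})$, while $v_+$ lives near $x=T$, so the distance between the two supports grows linearly in $T$. The symplectic pairing between $(v_-,\p_tv_-)$ and $(v_+,\p_tv_+)$ is therefore $O(e^{-cT})$, which in the Fock formalism translates into the factorisation of $\Ee_{\Ha_0,\kappa_+/(2\pi)}$ as a product over the two regions, up to $1+O(e^{-cT})$. For the outgoing factor the KG Hamiltonian $\Ha_0$ reduces to $D_x^2$ plus a term which is $O(e^{-cT})$ when applied to $v_+$ (the potential is exponentially small near the cosmological horizon), so this contribution is identified with $\Ee^+_{D_x^2,\kappa_+/(2\pi)}[u_+^*,D_xu_+^*]$ up to the same error.

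The core of the argument is the remaining black hole factor. Under the change of variable $y=\kappa_-^{-1}\ln(\gamma_0 x/e^{-\kappa_-T})$ the data $(v_-,\p_t v_-)$ becomes $(u_-^*, D_y u_-^*)$ on a fixed compact set in $y$ — this is precisely why the Jacobian $r_-/(\kappa_- r x)$ appears in the formula for $\p_t u(0)$ — and $\Ha_0$ is conjugated to $D_y^2$ modulo $O(e^{-cT})$. Crucially, the map $y\mapsto x=\gamma_0^{-1}e^{\kappa_-(y-T)}$ is $2\pi i/\kappa_-$-periodic in the imaginary $y$-direction; pushed through the second quantisation this turns the $\kappa_+/(2\pi)$-thermal state of $D_x^2$ into a state whose two-point function is $2\pi/\kappa_-$-periodic in imaginary $y$, i.e.\ a KMS state at temperature $\kappa_-/(2\pi)$. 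This Bogoliubov computation — the mathematical incarnation of Hawking's original argument, to be justified here via the WKB parametrix near the collapsing surface $\p\MM$ announced in the abstract — yields
\begin{equation*}
\Ee_{\Ha_0,\kappa_+/(2\pi)}[v_-,\p_tv_-] = \Ee^-_{D_x^2,\kappa_-/(2\pi)}[u_-^*,D_xu_-^*]\cdot(1+O(e^{-cT})),
\end{equation*}
and combining with the decoupling of the previous paragraph gives the theorem. The main obstacle will be to make this last identification quantitative: one must establish trace-class (or at least Hilbert--Schmidt) estimates on the difference between the two Bogoliubov transformations, not merely strong-operator convergence, so that the exponentially small remainders survive after second quantisation in Fock space.
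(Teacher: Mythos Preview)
Your architecture matches the paper's: pass to the quadratic form in the exponent, split it according to the decomposition of Theorem~\ref{thm:asymp}, show the cross terms are $O(e^{-cT})$, and identify the two diagonal pieces with the $\kappa_\pm$ thermal forms. Two of your steps, however, do not work as written.

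First, the claim that under $y=\kappa_-^{-1}\ln(\gamma_0 x/e^{-\kappa_-T})$ the operator $\Ha_0$ is ``conjugated to $D_y^2$ modulo $O(e^{-cT})$'' is false. Since $\partial_x=(\kappa_-x)^{-1}\partial_y$, the operator $D_x^2$ pulls back to $(\kappa_-x)^{-2}D_y^2+\ldots$, which blows up like $e^{2\kappa_-T}$ rather than converging to $D_y^2$. The $\kappa_-$ temperature does not emerge from a conjugation of Hamiltonians. What the paper does instead is: use L\"owner--Heinz and the bound $|u_\BB|_{L^2}=O(e^{-cT})$ to replace $\varphi_+(\Ha_0)=\Ha_0^{1/2}\coth(\beta_+\Ha_0^{1/2})$ by the bare $|D_{x,0}|$ (the $\beta_+$ drops out because $u_\BB$ lives at frequency $\sim e^{\kappa_-T}$, where $\coth\approx1$); then compute $\langle u_\BB,|D_{x,0}|\,u_\BB\rangle$ explicitly in Fourier and invoke Bachelot's Lemma~II.6 in \cite{Bachelot2}, which is exactly the Mellin-type identity saying that the $|\xi|$-form pulled back through $x\mapsto e^{\kappa_-y}$ becomes the $|\xi|\coth(\pi|\xi|/\kappa_-)$-form. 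Your KMS/imaginary-periodicity remark is the heuristic behind that lemma, not a substitute for it, and no Bogoliubov or trace-class machinery is used: everything happens at the level of the quadratic form. (Also, the WKB parametrix near $\partial\MM$ is part of the proof of Theorem~\ref{thm:asymp}, not of this theorem.)

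Second, the continuity of $\Ee_{\Ha_0,\kappa_+/(2\pi)}$ that you invoke is far from free. The $\psi_+(\Ha_0)=\Ha_0^{-1/2}\coth(\beta_+\Ha_0^{1/2})$ piece has a pole at $0$, so one must first show that $\Ha_0$ has no zero resonance and then control $\langle\Ha_0^{-1}v,v\rangle$ on the relevant test functions; the paper does this via the explicit Green kernel of $D_{x,0}^2$ and L\"owner--Heinz comparison (this is where the hypothesis $\Lambda<\Lambda_0$ enters, to force $V>0$). Likewise, the decoupling of the $v_-$ and $v_+$ contributions is not a matter of disjoint supports --- $\varphi_+(\Ha_0)$ and $\psi_+(\Ha_0)$ are nonlocal --- but is obtained by Cauchy--Schwarz for the positive forms $\langle\,\cdot\,,\varphi_+(\Ha_0)\,\cdot\,\rangle$, $\langle\,\cdot\,,\psi_+(\Ha_0)\,\cdot\,\rangle$ together with the already-established diagonal bounds.
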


This theorem is the mathematical interpretation of Hawking's result. As the time goes to infinity the vacuum state (which has temperature $\kappa_+/(2\pi)$ in SdS) in a spacetime containing a collapsing star splits into two parts. The first one corresponds to a thermal state at temperature $\kappa_+/(2\pi)$ escaping to the cosmological horizon -- as expected. The second part is more surprising: it corresponds to a thermal state at temperature $\kappa_-/(2\pi)$ emerging from the black hole horizon. As seen by an outside observer the resulting black hole emits bosons at temperature $\kappa_-/(2\pi)$. Similar results were obtained in slightly different contexts in \cite{Bachelot2}, \cite{Bachelot1}. The interest of the result, apart for the novelty of the proof, is the exponential rate of convergence. 

\subsection{The Schwarzchild-de Sitter spacetime.}\label{sec:1} Here we define the Schwarzchild-de Sitter spacetime (SdS) and the coordinates systems we will use. We also define collapsing-star spacetimes.

\subsubsection{Definition of the spacetime}\label{sub:2} Let $M$ and $\Lambda$ be two \textit{positive} numbers such that $0 < 9M^2 \Lambda < 1$. The Scharzchild-de Sitter metric on $\R \times \R_+ \times \Ss$ is in coordinates $(t,r,\w)$ given by
\begin{equation}\label{eq:1a}
g =  \dfrac{\Delta_r}{ r^2}dt^2-r^2 \left( \dfrac{dr^2}{\Delta_r} + d\sigma_\Ss^2 \right).
\end{equation}
The metric $d\sigma_\Ss^2$ is the Euclidean metric on the $2$-sphere $\Ss$ and $\Delta_r$ is defined by
\begin{equation*}
\Delta_r = r^2\left( 1-\dfrac{\Lambda r^2}{3} \right) -2M r., \\
\end{equation*}
This polynomial has four distinct roots, one negative, one equal to zero, and two positive ones. We call $0 < r_- < r_+$ the two positive. In this paper we are interested in $r$ lying in some small neighborhood $\UU$ in $\R$ of $[r_-,r_+]$, so that $r_\pm$ are the only roots of $\Delta_r$ on $\UU$. The submanifold $\R \times \UU \times \Ss$ (provided with the metric given by \eqref{eq:1a}) is a smooth Lorentzian manifold. The hypersurfaces $r=r_-$, $r=r_+$ are respectively called the black hole horizon and the cosmological horizon. The surface gravity $\kappa_+$ and $\kappa_-$ of the horizons $r=r_\pm$ are defined by
\begin{equation*}
\kappa_\pm = \mp\dfrac{\Delta_r'(r_\pm)}{2r_\pm^2}.
\end{equation*}
On $\R \times (r_-,r_+) \times \Ss$ we define $\SSS_*$ the system of coordinates $(t,x,\w)$ where $x(r)$ is given by
$x'(r) = r^2/\Delta_r$. The main properties of $x$ are the following asymptotics:
\begin{equation*}\begin{gathered}
x = \mp \dfrac{1}{2\kappa_\pm} \ln|r-r_\pm| + O(1), \ r \text{ near } r_\pm,\\
r=r_\pm + O(e^{\mp 2\kappa_\pm x}), \ x \text{ near } \pm \infty
\end{gathered}
\end{equation*}
-- see \cite[Proposition 4.1]{Dyatlov1} for a more precise statement. As $r$ spans $[r_-,r_+]$ $x$ spans the whole real axis. In $\SSS_*$ the metric $g$ takes the form
\begin{equation*}
g = \dfrac{\Delta_r}{ r^2}(dt^2-dx^2) -r^2  d\sigma_\Ss^2.
\end{equation*}
The smooth Lorentzian manifold $\R \times \R \times \Ss$ provided with the metric $g$ is called the Schwarzchild-de Sitter spacetime. The d'Alembertian operator on this manifold is given by
\begin{equation*}
\square = \dfrac{1}{\Delta_r} \dd{^2}{t^2} - \dfrac{1}{r^2 \Delta_r} \dd{}{x} r^2 \dd{}{x} + \dfrac{\Delta_{\Ss}}{r^2}.
\end{equation*}

\subsubsection{Compactification of the spacetime.}\label{subsec:coo} Fix $K$ a compact interval in $(r_-,r_+)$ and define a smooth function $F_K$ on $(r_-,r_+)$ such that
\begin{enumerate}
\item[$(i)$] For $r \in K$, $F_K(r) = 0$.
\item[$(ii)$] There exists $\mu_K \in C^\infty(\UU)$  with $\mu_K(r_-) = 0$, such that for $r$ in a neighborhood of $r_\pm$,
\begin{equation}\label{eq:yr3}
F_K(r) = - \dfrac{1}{2\kappa_\pm} \ln|r-r_\pm| + \mu_K(r).
\end{equation}
\item[$(iii)$] The level set of $\Ht = t - F_K(r)$ are spacelike -- equivalently $|F_K'(r)| < \Delta_r/r^2$.
\end{enumerate}
Near $r_\pm$ $\Delta_r \sim \mp 2 \kappa_\pm r_\pm^2 (r-r_\pm)$. Differentiating \eqref{eq:yr3} yields
\begin{equation}\label{eq:3d}
F_K'(r) = \pm \dfrac{r^2}{\Delta_r} + \lambda_K'(r), \ r \text{ near } r_\pm
\end{equation}
for some function $\lambda_K \in C^\infty(\UU)$. Point $(iii)$ above  implies that $\mp \lambda_K'(r_\pm) > 0$. It is always possible to construct such a function $F_K$, see for instance \cite{Dyatlov3}. We denote below the system of coordinates $(\Ht,r,\w)$ by $\SSS$. In $\SSS$ the metric $g$ has smooth coefficients up and beyond the event horizons. It takes the form
\begin{equation*}
g =  -r^2 \left( \dfrac{dr^2}{\Delta_r} + d\te^2 \right) - r^2 \sin^2 \te d\vp^2 + \dfrac{\Delta_r}{ r^2}(d\Ht-F_K'(r) dr)^2.
\end{equation*}
The d'Alembertian operator is given in $\SSS$ by
\begin{equation*}
\square = \dfrac{1}{r^2}\left[ - (\p_r - F_K'(r)\p_\Ht) \Delta_r (\p_r - F_K'(r)\p_\Ht) + \Delta_\Ss + \dfrac{r^4}{\Delta_r}\p_t^2 \right].
\end{equation*}
In Figure \ref{fig:31} we give pictorial representations of $\SSS_*$ and $\SSS$.

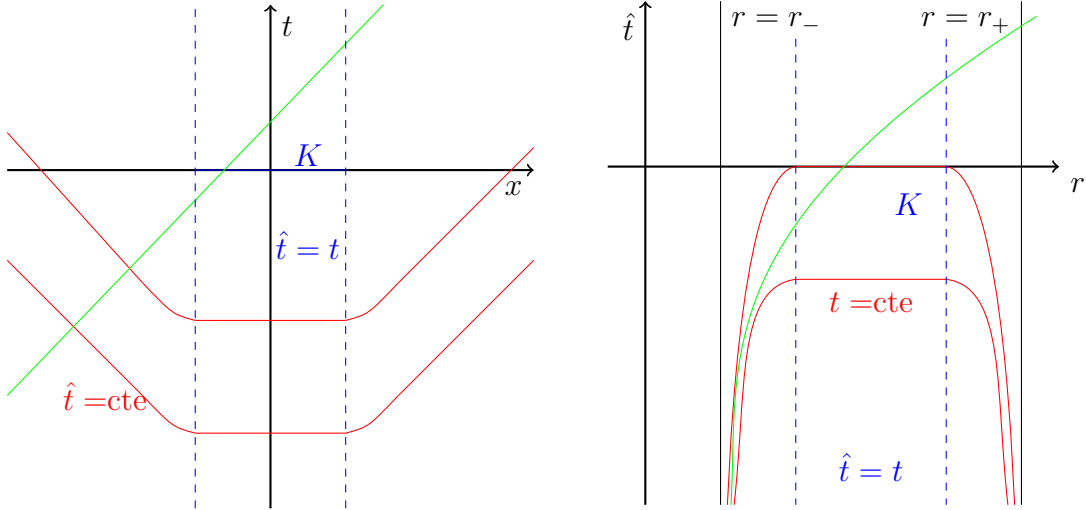
\begin{figure}
\centering
\begin{subfigure}{.5\textwidth}
  \centering
\begin{tikzpicture}
\draw[thick,->] (-3.5,0) -- (3.5,0) node[anchor=north east] {$x$}; \draw[thick,->] (0,-4.5) -- (0,2.2) node[anchor=north west] {$t$};
\draw[blue] (-1,0) -- (1,0);
\node[blue] at (0.5,.2) {$K$};
\draw[red] (-1,-2) -- (1,-2);
\draw[red] (-1,-2) .. controls (-1.3,-1.92) .. (-1.7,-1.5);
\draw[red] (-1.7,-1.5) -- (-3.5, 0.5);
\draw[red] (1,-2) .. controls (1.3,-1.92) .. (1.7,-1.5);
\draw[red] (1.7,-1.5) -- (3.5, 0.3);
\draw[red] (-1,-3.5) -- (1,-3.5);
\draw[red] (-1,-3.5) .. controls (-1.3,-3.42) .. (-1.7,-3);
\draw[red] (-1.7,-3) -- (-3.5,-1.2);
\draw[red] (1,-3.5) .. controls (1.3,-3.42) .. (1.7,-3);
\draw[red] (1.7,-3) -- (3.5,-1.2);
\node[red] at (-2.2,-3) {$\hat{t}=$cte};
\draw[green] (-3.5,-3) -- (1.5,2.2);
\draw[blue,dashed] (-1,-4.5) -- (-1,2.2);
\draw[blue,dashed] (1,-4.5) -- (1,2.2);
\node[blue] at (.5,-1) {$\hat{t}=t$};
\end{tikzpicture}
\end{subfigure}%
\begin{subfigure}{.5\textwidth}
  \centering
\begin{tikzpicture}
\draw[thick,->] (1,-4.5) -- (1,2.2) node[anchor=north east] {$\hat{t}$}; \draw (2,-4.5) -- (2,2.2) node[anchor=north west] {$r=r_-$}; ; \draw (6,-4.5) -- (6,2.2) node[anchor=north east] {$r=r_+$}; \draw[thick,->] (0.5,0) -- (6.5,0) node[anchor=north west] {$r$};
\draw[red] (3,0) -- (5,0);
\node[blue] at (4.5,-.5) {$K$};
\draw[red] (3,0) .. controls (2.2,-.1) and (2.1, -4.6) .. (2.09,-4.5);
\draw[red] (5,0) .. controls (5.8,-0.1) and (5.9, -4.6) .. (5.91,-4.5);
\draw[red] (3,-1.5) .. controls (2.2,-1.6) and (2.35,-3) .. (2.18,-4.5);
\draw[red] (3,-1.5) -- (5,-1.5);
\draw[red] (5,-1.5) .. controls (5.8,-1.6) and (5.65,-3) .. (5.82,-4.5);
\node[red] at (4,-1.8) {$t=$cte};
\draw[green]  (6.2,2) .. controls (1.8,-.8) and (2.24,-2.5) .. (2.14,-4.5);    ;
\draw[blue,dashed] (3,1.7) -- (3,-4.5);
\draw[blue,dashed] (5,1.7) -- (5,-4.5);
\node[blue] at (4,-4) {$\hat{t}=t$};
\end{tikzpicture}
\end{subfigure}
\caption{ On the left (resp. right) we plot in red the level sets of $\Ht$ (resp. $t$) in $\SSS_*$ (resp. $\SSS$). Inside the zone delimited by the blue dashes we have $\Ht=t$. In green we drew the typical trajectory of an untrapped geodesic.}
\label{fig:31}
\end{figure}

\subsubsection{Definition of a collapsing star} In order to describe mathematically the Hawking radiation we give a model of a star collapsing to a black hole. A collapsing-star spacetime is a smooth Lorentzian manifold with boundary $\MM$ given by
\begin{equation*}
\MM = \{(t,x,\w) \in \R \times \R \times \Ss, \ x \geq z_*(t), \ t > 0\}
\end{equation*}
provided with the metric $g$ defined above. The boundary $\BB=\p\MM$ describes the surface of the star. We require $\BB$ to be timelike and the function $t \in \R^+ \mapsto z_*(t)$ to be negative, smooth, decreasing and to satisfy
\begin{equation*}
z_*(t) = -t - A_0 e^{-2\kappa_-t} + O_{C^\infty}(e^{-4\kappa_-t})
\end{equation*}
for some $A_0 > 0$. Such assumptions on $z_*$ are justified in \cite{Bachelot2}. It will be useful to look at the manifold $\MM$ in $\SSS$. In this system of coordinates $\MM$ can be seen as
\begin{equation*}
\left\{(\Ht,r,\w) \in \R \times (r_-,r_+) \times \Ss, \ r \geq z(\Ht) \text{ for } \Ht \in [0,\Ht_\BB], \ \Ht + F_K(r) > 0\right\}.
\end{equation*}
Here $\Ht_\BB$ is a positive number and $\Ht \in [0,\Ht_\BB] \mapsto z(\Ht)$ is a smooth decreasing function such that as $\Ht \rightarrow \Ht_\BB$,
\begin{equation}\label{eq:boundary2}
z(\Ht) = r_- + \az_0(\Ht-\Ht_\BB) + O((\Ht-\Ht_\BB)^2), 
\end{equation}
for some $\az_0 < 0$. In the system of coordinate $\SSS$ this equation can be found in \cite{FH}.

\subsection{Plan of the paper}
We organize the paper as follows. In \S \ref{sec:freeprop} we construct the functions $u_\pm^*$ of Theorem \ref{thm:asymp} using conformal scattering. We give a semiclassical interpretation of the blueshift effect. As we shall see, backwards KG fields asymptotically concentrate near the horizons $r=r_\pm$. This goes together with a radial frequency blow up. This microlocalization result will allow us to study precisely the part of $u$ that reflects off the boundary of the star $\BB$ in \S \ref{sec:mixedprob}. We construct a WKB parametrix and use it to prove Theorem \ref{thm:asymp}. In \S \ref{sec:5} we describe the quantization procedure then prove Theorem \ref{thm:haw}. Despite the fine asymptotics of Theorem \ref{thm:asymp} this last step is quite technical. 

In the figure \ref{fig:dessina} (resp. figure \ref{fig:dessin2}) we plot the propagation of Klein-Gordon fields in $\MM$ starting at time $T \gg 1$ in the system of coordinates $\SSS$ (resp. $\SSS_*$). \begin{center}
\begin{figure}
\begin{tikzpicture}
\draw[thick,->] (0,-1) -- (0,12) node[anchor=north east] {$\hat{t}$}; 
\draw[thick,->] (-1,0) -- (9,0) node[anchor=north west] {$r$};
\draw (1,-1) -- (1,12); 
\draw (8,-1) -- (8,12) node[anchor=north west] {$r=r_+$}; 
\draw[thick] (1,4) .. controls (3.5,2) .. (4,0);
\draw[blue,dashed] (-1,4) -- (9,4);
\node[blue] at (-.8,4.3) {$\hat{t}=\hat{t}_B$};
\draw[red] (3,11) .. controls (1.5,10) and (1.3,6) .. (1.1,3.9);
\draw[red] (6,11) .. controls (4,10) and (2,8.5) .. (1.22,3.82);
\draw[red] (3,11) .. controls (7,9) and (7.8,4) .. (7.8,0);
\draw[red] (6,11) .. controls (7.8,9) and (7.9,4) .. (7.9,0);
\node at (1,12.2) {$r=r_-$};
\draw[red] (3,11) -- (6,11);
\node[red] at (4.5,11.3) {$u_0,u_1$};
\draw[red] (1.1,3.9) .. controls (4,3.7) and (5,1.5) .. (6,0);
\draw[red] (1.22,3.82) .. controls (3.8,3.6) and (4.9,1.5) .. (5.88,0);
\draw[blue,dashed] (-1,11) -- (9,11);
\node[blue] at (-.6,10.6) {$\hat{t}=T$} ;
\node[red] at (2.3,9) {$A$} ;
\node[red] at (6.4,9) {$B$} ;
\node at (4.5,5) {$\CC$} ;
\end{tikzpicture}
\caption{The propagation of KG fields in $\SSS$. In \S \ref{sec:freeprop} we prove that such fields propagate mainly along the zones $A,B$. In $\CC$ the energy decays exponentially. At time $\Ht=\Ht_\BB$ the left part of the KG fields reflects off the boundary of the star and its propagation can be well approximated by a WKB parametrix.}\label{fig:dessina}
\end{figure}
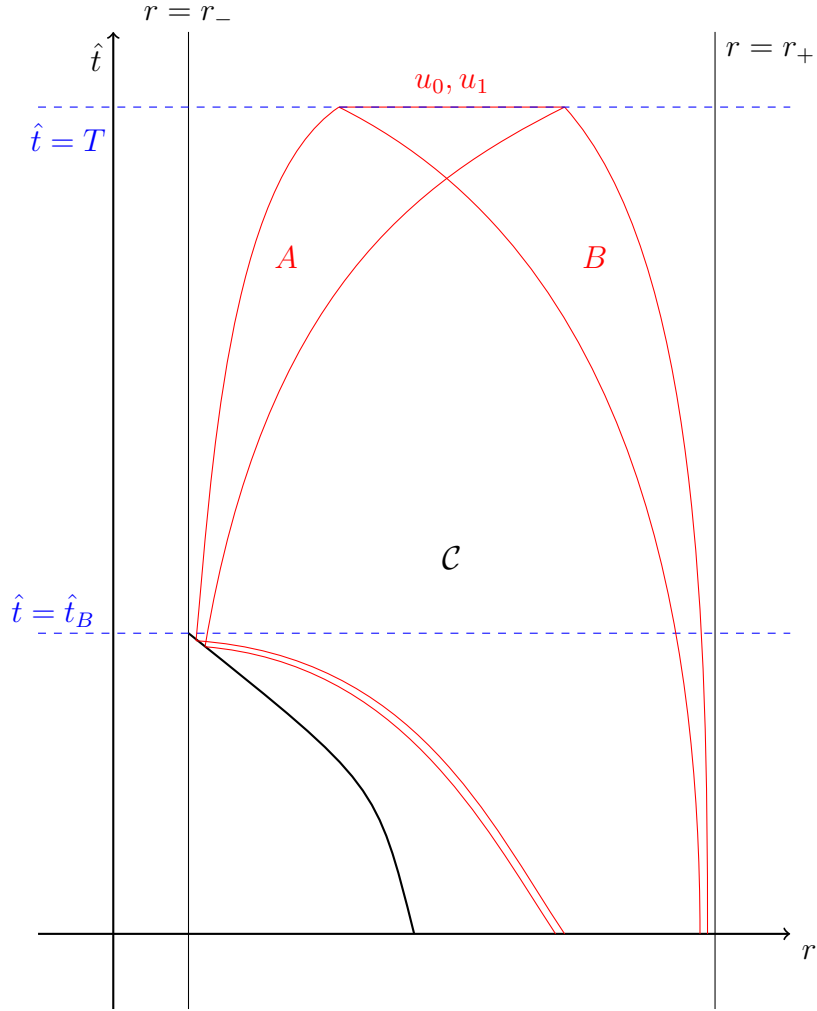
\end{center}
\begin{center}
\begin{figure}
\begin{tikzpicture}
\draw[thick,->] (-7,0) -- (8,0) node[anchor=north east] {$x$}; 
\draw[thick,->] (0,-1)  -- (0,7) node[anchor=north east] {$t$};
\draw[thick] (-7,7) .. controls (-2.5,2.4) and (-2.2, 1.5) .. (-2,0);
\node at (-5.5,5) {$\BB$}; 
\node[blue] at (7,6.5) {$t=T$}; 
\node[red] at (-1,6.5) {$(u_0,u_1)$};
\draw[blue,dashed] (8,6) -- (-6.1,6); 
\draw[red] (-1.5,6) -- (1.5,6);
\draw[red] (-1.5,6) -- (4.5,0);
\draw[red] (1.5,6) -- (7.5,0); 
\draw[red] (-1.5,6) -- (-3.9,3.6);
\draw[red] (-0.3,0) -- (-3.9,3.6);
\draw[red] (1.5,6) -- (-2.63,1.87); 
\draw[red] (-2.63,1.87) -- (-.76,0); 
\end{tikzpicture}
\caption{The propagation of KG fields in $\SSS_*$.}\label{fig:dessin2}
\end{figure}
\end{center}
\subsection{Relation with existing work} Let us describe the relation of this paper with previous work of Fredenhagen-Haag \cite{FH} and Bachelot \cite{Bachelot2}, \cite{Bachelot1}. In \cite{FH} the authors study the Hawking radiation using the same coordinate system as us. In order to perform this derivation they make a few structural assumptions on the asymptotics of KG fields. Theorem \ref{thm:asymp} here \textit{proves} some of these assumptions, clarifying their method. 

In \cite{Bachelot2}, \cite{Bachelot1} the author work in the Schwarzchild metric and derive a result similar to Theorem \ref{thm:haw}. The work is entirely realized in $\SSS_*$ and relies on the separation of variables using spherical harmonics. Here we do not use the spherical symmetry of SdS and rely instead on purely geometric arguments and on \cite[Theorem 1.4]{Vasy} -- which is stable under broad perturbations of the metric. We hope that this work will open the way to a stable treatment of the Hawking radiation.

\subsection{Notations}\label{sub:1}
\begin{itemize}
\item For $z\in \C$, we denote by $\Ree \ z $ and $\Ime z$ the real and imaginary parts of $z$.
\item We denote throughout the paper by $c,c',C>0$ some constants subject to change from a line to another. 
\item If $E$ is a subset of a Lorentzian manifold provided with a time orientation we define $J^+(E)$ its chronological future and $J^-(E)$ its chronological past.
\item The measure $d\w$ is the Lebesgue measure on the $2$-sphere $\Ss$.
\item The space $C^\infty$ with respect to a system of coordinate $\{x^i\}$ is provided with the topology induced by the following convergence. A sequence of functions $f_n$ is said to go to $0$ in $C^\infty$ with respect to $\{x^i\}$ if for all $\az = (\az_1 ..., \az_d)$, $\p_{x_1}^{\az_1} ... \p_{x_d}^{\az_d} f_n$ converges to $0$ in $L^\infty$.
\item If $s \in \R$, the space $H^s$ with respect to a system of coordinate is the standard $L^2$-based Sobolev space.
\item If $X$ is a function space whose topology is given by a family of seminorms and $f,g \in X$, $g \neq 0$, we write $f = O_X(g)$ if $fg^{-1}$ is bounded for all the seminorms.
\item We use the $\pm/\mp$ notation: any time the symbols $\pm/\mp$ appear in an equation, this equation has two meanings: one for the upper subscript, one for the lower subscript. For instance, $f(x) = \pm 1 \text{ for } \mp x \geq 1$ means $f(x) = 1 \text{ for } -x \geq 1 
$ and $f(x) = -1 \text{ for } x \geq 1$.
\item We say that $f \in C^\infty(\R \times \Ss)$ belongs to the class of symbols $S^\delta$ if for all $\az, \beta>0$, $\p_x^\az \Delta_\Ss^\beta f = O(\lr{x}^{\delta-\az})$.
\end{itemize}

\smallsection{Acknowledgment} I would like to thank Dietrich H\"afner for suggesting the project and for his invaluable help and guidance throughout the conception of this paper. I would also like to thank Semyon Dyatlov and Maciej Zworski for many fruitful discussions and ideas. I am indebted to Michal Wrochna for a partial proofreading of the manuscript. Finally, I gratefully acknowledge financial support from the Fondation CFM pour la recherche.

\section{The free propagation}\label{sec:freeprop} 

Fix $K$ a compact set of $(r_-,r_+)$, and $F_K, \Ht$ associated to $K$ as described in \S \ref{subsec:coo}.  Fix $u_0, u_1  \in C_0^\infty((r_-,r_+) \times \Ss)$ with support in $K \times \Ss$. Here we study the scattering of solutions of
\begin{equation}\label{eq:fy6}
\system{(\square+m^2) u = 0 \\ u|_{\Ht=0} = u_0, \ \ \p_\Ht u|_{\Ht=0} = u_1.}
\end{equation}
In \S \ref{subsec:2.1} we recall the decay estimates of the literature. In \S \ref{subsec:1a} we define the radiation fields of $u_0, u_1$ and we study their qualitative properties. In \S \ref{subsec:2.4} we give a microlocal description of the blueshift effect. In this section unless precised otherwise the Sobolev norms are measured with respect to $r,\w$.

\subsection{Pointwise backward decay}\label{subsec:2.1} Decay of KG fields in SdS have been studied in \cite{BonHaf}, \cite{DafRod1} and  \cite{MelSaBVas}. \cite[Theorem $1.3$]{MelSaBVas} states:

\begin{proposition}\label{prop:res} There exists $\nu >0$ such that if $u$ solves $\eqref{eq:fy6}$ then
\begin{equation}\label{eq:hDs}
u(\Ht,r,\w) = O_{C^\infty(\R^+ \times \UU \times \Ss)}(e^{- \nu \Ht}).
\end{equation}
\end{proposition}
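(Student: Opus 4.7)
\smallsection{Proof plan} The plan is to deduce this exponential decay from the meromorphic continuation and resonance-free strip results of \cite{MelSaBVas} (see also \cite[Theorem 1.4]{Vasy}) for the Klein--Gordon operator on SdS. Three facts combine: in the coordinates $\SSS$ the metric and the d'Alembertian extend smoothly and non-degenerately across both horizons; the Mellin/Laplace transform in $\Ht$ reduces the Cauchy problem to a stationary family $P(\lambda)$ on the spatial variables, which is Fredholm on suitable anisotropic radial-point Sobolev spaces; and $P(\lambda)^{-1}$ admits a meromorphic continuation whose poles $\{\lambda_j\}$ lie in $\{\operatorname{Im}\lambda<0\}$, with only finitely many in any strip $\{\operatorname{Im}\lambda>-2\nu\}$. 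Positivity of $m$ is what rules out a resonance at $\lambda=0$.

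First I would recast \eqref{eq:fy6} as a first order system for $U=(u,\p_\Ht u)$ and take its Laplace transform along a contour in the upper half plane, where $U$ is a priori tempered by finite speed of propagation in the $\SSS$ coordinates. Inverting the transform expresses $U(\Ht)$ as a contour integral of $P(\lambda)^{-1}$ against the Cauchy data $(u_0,u_1)$. I then shift the contour down to $\{\operatorname{Im}\lambda=-\nu\}$, choosing $\nu>0$ smaller than $|\operatorname{Im}\lambda_j|$ for every resonance in that strip. Since no resonance lies on or above the real axis, each residue contributes a term of the form $e^{-i\lambda_j\Ht}$ which decays strictly faster than $e^{-\nu\Ht}$, while the remainder integral on the shifted contour is controlled by the polynomially bounded cutoff resolvent estimates of \cite{MelSaBVas}. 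This yields $U(\Ht)=O(e^{-\nu\Ht})$ in a fixed spatial Sobolev norm, uniformly on $\UU\times\Ss$.

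To promote this bound to $C^\infty(\R^+\times\UU\times\Ss)$, I would commute the whole argument with $\p_\Ht^k$ for arbitrary $k$: each $\p_\Ht^k U$ solves the same system with smooth compactly supported data (the Cauchy data for $\p_\Ht^k U$ being polynomial expressions in $u_0, u_1$ and the coefficients of the equation), so the same contour deformation produces exponential decay for every time derivative. Once all $\Ht$-derivatives are controlled, standard elliptic regularity for the spatial part of $\square+m^2$, which in $\SSS$ is non-characteristic across the horizons after absorbing the $\p_\Ht$-terms, recovers arbitrary spatial derivatives and closes the $C^\infty$ bound.

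The main obstacle is the resonance-free strip itself, which is the deep input supplied by \cite{MelSaBVas} and \cite{Vasy}: it rests on the $r$-normally hyperbolic nature of the trapped set on SdS together with radial-point estimates at the two horizons. I would use it as a black box here; the novelty of the present paper lies not in this step, but in the way Proposition \ref{prop:res} is combined in the subsequent sections with a semiclassical analysis of the blueshift and a WKB parametrix near the collapsing surface.
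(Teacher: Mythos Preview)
The paper gives no proof of this proposition: it is stated as a direct citation of \cite[Theorem~1.3]{MelSaBVas}, with the remark that the strict positivity of $\nu$ follows from the absence of a zero resonance when $m>0$ (referring to \cite{Dyatlov1}), and that the dependence of the constant on Sobolev norms of $(u_0,u_1)$ is as in \cite[Theorem~2]{Dyatlov2} and \cite[Theorem~1.4]{Vasy}. Your proposal correctly identifies both the source and the mechanism (meromorphic continuation of $P(\lambda)^{-1}$, resonance-free strip, contour shift), and what you wrote is an accurate sketch of what lies behind those references; but you should be aware that the paper itself treats this entirely as a black box and does not reproduce any of these steps.
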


The implicit constant in \eqref{eq:hDs} depends only on high-order Sobolev norms of $(u_0,u_1)$ -- see \cite[Theorem $2$]{Dyatlov2} and \cite[Theorem $1.4$]{Vasy} for a precise description of this dependence in a much more general framework. The fact that $\nu$ is positive is due to the absence of the zero resonance in the case of the Klein-Gordon equation with strictly positive mass -- see \cite{Dyatlov1}.

Decay for forwards solutions beyond event horizons is possible because the coordinate system $\SSS$ allows the linear waves to cross the horizons in finite time. As a result all the mass of the solution escapes $[r_-,r_+] \times \Ss$. Things behave differently backwards, as all the mass of the solution remains between $r_-$ and $r_+$. The Klein-Gordon equation is not time-reversible in $\SSS$ though it is time reversible in $\SSS_*$. This induces the following invariance. Let $\gamma$ be the map
\begin{equation*}
\gamma : (\Ht,r,\w) \mapsto (-\Ht-2F_K(r), r,\w).
\end{equation*}
As $\square$ is invariant under $t \mapsto -t$, the pull-back operator $\gamma^* : u \mapsto \gamma^*u$ satisfies $[\gamma^*,\square+m^2]=0$. Since $t=\Ht$ on $K\times \Ss$ we can apply Proposition \ref{prop:res} to $\gamma^* u$. It gives

\begin{proposition}\label{prop:res2} There exists $\nu >0$ such that if $u$ solves $\eqref{eq:fy6}$,
\begin{equation*}
u(\Ht,r,\w) = O_{C^\infty(\{(\Ht,r,\w): \ \Ht+2F_K(r) \leq 0 \})}(e^{\nu(\Ht+2F_K(r))}).
\end{equation*}
\end{proposition}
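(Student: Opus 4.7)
The plan is to deduce Proposition \ref{prop:res2} from Proposition \ref{prop:res} via the time-reversal symmetry encoded in the involution $\gamma(\Ht,r,\w) = (-\Ht - 2F_K(r), r,\w)$ introduced in the preceding discussion. Geometrically, $\gamma$ reduces to the standard reversal $(t,x,\w) \mapsto (-t,x,\w)$ in $\SSS_*$, so it interchanges the forward half-space $\{\Ht \geq 0\}$ and the backward region $\{\Ht + 2F_K(r) \leq 0\}$ of the statement, while turning the forward rate $e^{-\nu \Ht}$ of Proposition \ref{prop:res} into the rate $e^{\nu(\Ht + 2F_K(r))}$ after substitution.

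Concretely, I would first check $[\gamma^*, \square + m^2] = 0$, which is immediate in $\SSS_*$ since the metric depends on $t$ only through $dt^2$ and hence $\square$ is invariant under $t \mapsto -t$. Setting $\tilde u := \gamma^* u$, this yields another Klein-Gordon solution on the interior $\R \times (r_-,r_+) \times \Ss$ where $\gamma$ is smooth. Since $F_K$ vanishes on $K$, the map $\gamma$ fixes $(0,r,\w)$ for $r \in K$, so the Cauchy data of $\tilde u$ on $\{\Ht = 0\}$ coincide with $(u_0, -u_1)$ on $K \times \Ss$; for $r \notin K$ they are obtained by propagating $(u_0,u_1)$ in $\SSS_*$ for time $-F_K(r)$, and are smooth with finite Sobolev norms of every order. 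Invoking the Sobolev-norm dependence of the constant in Proposition \ref{prop:res}, this gives $\tilde u(\Ht,r,\w) = O_{C^\infty}(e^{-\nu \Ht})$ on $\R^+ \times \UU \times \Ss$. Any point $(\Ht_0,r_0,\w_0)$ with $\Ht_0 + 2F_K(r_0) \leq 0$ can be written as $\gamma(\Ht_1,r_0,\w_0)$ with $\Ht_1 := -\Ht_0 - 2F_K(r_0) \geq 0$, and $\gamma^2 = \mathrm{id}$ then yields
\begin{equation*}
u(\Ht_0,r_0,\w_0) = \tilde u(\Ht_1,r_0,\w_0) = O(e^{-\nu \Ht_1}) = O(e^{\nu(\Ht_0 + 2F_K(r_0))}),
\end{equation*}
which is the desired estimate; the full $C^\infty$ version follows in the same manner from the $C^\infty$ form of Proposition \ref{prop:res}.

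The main obstacle is ensuring that Proposition \ref{prop:res} really does apply to $\tilde u$: its Cauchy data on $\{\Ht = 0\}$ are generically not compactly supported in $K \times \Ss$, in contrast with the hypothesis of \eqref{eq:fy6}. The cleanest resolution is to use the Sobolev-norm-only dependence of the constant recorded after Proposition \ref{prop:res}; alternatively, one can localize $\tilde u$ by a cutoff $\chi(r)$ supported in a slightly enlarged neighborhood of $K$ and control the remainder via finite propagation speed and standard energy estimates in $\SSS_*$. Once this is handled, the remainder of the argument is purely algebraic.
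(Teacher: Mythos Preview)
Your approach via the involution $\gamma$ is exactly the paper's; the paper simply asserts that Proposition~\ref{prop:res} applies to $\gamma^*u$ and reads off the conclusion. The obstacle you flag, however, is not actually present: the Cauchy data of $\tilde u = \gamma^* u$ on $\{\Ht = 0\}$ \emph{are} supported in $K \times \Ss$, so Proposition~\ref{prop:res} applies verbatim with data $(u_0, -u_1) \in C_0^\infty(K \times \Ss)$.

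To see this, recall that in $\SSS_*$ the map $\gamma$ is $(t,x,\w) \mapsto (-t,x,\w)$, so $\tilde u$ solves $(\square+m^2)\tilde u=0$ with data $(u_0, -u_1)$ at $t = 0$, supported in $|x| \leq A$ where $[-A,A] = x(K)$. The trace of $\tilde u$ on $\{\Ht = 0\}$ at a point with $r \notin K$ equals $u(-F_K(r), x(r), \w)$ in $\SSS_*$. Integrating the spacelike condition $|F_K'(r)| < x'(r) = r^2/\Delta_r$ from the nearest endpoint of $K$ gives $|F_K(r)| < |x(r)| - A$, hence $|x(r)| > |F_K(r)| + A$ and the point $(-F_K(r), x(r))$ lies strictly outside the domain of influence of $\{0\} \times [-A,A] \times \Ss$; so $u$ vanishes there. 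Equivalently: $\{\Ht = 0\}$ is an acausal Cauchy hypersurface containing $K_0 := \{t = 0\} \cap (K \times \Ss)$, and therefore meets $J^+(K_0) \cup J^-(K_0)$ only in $K_0$.

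Your proposed workaround via the Sobolev-norm dependence of Proposition~\ref{prop:res} would in fact be harder to justify directly: since $\gamma$ is singular at the horizons, it is not a priori clear that $\tilde u|_{\Ht=0}$ extends smoothly to $r = r_\pm$, which the cited results require. The compact-support observation above settles this for free.
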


\subsection{Radiation fields}\label{subsec:1a} Here we define the radiation fields of $u_0, u_1$. The main idea comes from \cite{Friedlander}. There the author constructs the \textit{radiation field of expanding waves} in the Euclidean setting. We adapt this construction to the hyperbolic setting - which happens to be much simpler. Similar ideas can be found in \cite{JP} for the Schwarzchild metric. Fix $\chi_\pm : [r_-,r_+] \rightarrow [0,1]$ such that
\begin{equation}\label{eq:kjx}
\chi_++\chi_- = 1, \ \chi_\pm = 1 \ \operatorname{near} \ r_\pm.
\end{equation}
We recall that $r \mapsto x(r)$ is the radial coordinate defined in \S \ref{sub:2}

\begin{proposition}\label{prop:scatt} If $u$ solves \eqref{eq:fy6} there exist $u_\pm \in X_\scatt$ such that \begin{equation}\label{eq:gfC}
u(\Ht,r,\w) = \left(\sum_{+/-}\chi_\pm (r) u_\pm(-\Ht-2F_K(r),\w)\right) + \epsi(\Ht,x,\w), \ \ \Ht \leq 0
\end{equation}
where $\epsi$ is a smooth function on $\R^-\times\R\times\Ss$ satisfying $\epsi = O_{C^\infty(\R^- \times \R \times \Ss)}(e^{c\Ht})$.
\end{proposition}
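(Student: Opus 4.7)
The plan is to work in the coordinate system $\SSS_*$, where $\square$ is time-reversible and, after the substitution $v := r u$, the Klein--Gordon equation $(\square+m^2)u=0$ takes the form
\begin{equation*}
(\p_t^2 - \p_x^2)\, v + W v = 0,
\end{equation*}
with $W$ a second-order angular differential operator whose coefficients involve $\Delta_r$ and $r''(x)/r(x)$, and hence decay exponentially as $|x|\to\infty$ by the asymptotics $\Delta_r\sim e^{\mp 2\kappa_\pm x}$ near $r_\pm$. This places the problem in the familiar framework of short-range Lax--Phillips scattering on $\R^{1+1}_{t,x}\times\Ss_\w$.

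First, I would construct backward radiation fields at the $C^\infty$ level by Cook's method. The compact support of $(ru_0, ru_1)$ together with finite speed of propagation confines $\supp v(t,\cdot)$ to $\{|x|\leq X_0+|t|\}$ for some $X_0$; on this set, a free left-moving wave $\phi(x+t)$ concentrates at $x\to+\infty$ as $t\to-\infty$ where $W$ is exponentially small, giving $\|W\phi(\cdot+t)\|_{L^2}\lesssim e^{ct}$, and similarly for right-moving waves at $x\to-\infty$. Duhamel's formula, combined with commuting the equation with the null derivatives $\p_t\pm\p_x$ and with $\Delta_\Ss$, then produces $A_\pm\in C^\infty(\R\times\Ss)$ such that
\begin{equation*}
v(t,x,\w) = A_+(t+x,\w) + A_-(t-x,\w) + O_{C^\infty}(e^{ct}), \qquad t\to-\infty.
\end{equation*}
Finite speed of propagation forces $\supp A_\pm \subset (-\infty,X_0]\times\Ss$. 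Proposition~\ref{prop:res2}, pulled to $\SSS_*$ as $v=O(e^{\nu(t+F_K)})$ on $\{t+F_K\leq 0\}$ and evaluated along $t\to-\infty$ with $t\pm x$ fixed (using $F_K = \pm x + O(1)$ near $r_\pm$), yields $A_\pm(\eta,\w) = O_{C^\infty}(e^{\nu\eta})$ as $\eta\to-\infty$; shrinking $\nu$ if necessary, I may assume $\nu<\min(\kappa_+,\kappa_-)$.

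To transfer the decomposition back to $\SSS$, I would use that near $r_\pm$ the functions $x(r)$ and $F_K(r)$ share a logarithmic singularity, so $t\pm x = \Ht + 2F_K(r) + \phi_\pm(r)$ for some $\phi_\pm\in C^\infty$ near $r_\pm$. Setting $u_\pm(s,\w):=r_\pm^{-1}A_\pm(-s+\phi_\pm(r_\pm),\w)$ transfers the one-sided support and exponential decay of $A_\pm$ to the required $X_\scatt$ properties of $u_\pm$. The discrepancies from replacing $r$ by $r_\pm$ and $\phi_\pm(r)$ by $\phi_\pm(r_\pm)$ produce correction terms bounded by $|r-r_\pm|\cdot(|A_\pm|+|A'_\pm|)(\Ht+2F_K(r)+O(1))$; combining the bound $|r-r_\pm|\lesssim e^{-2\kappa_\pm F_K(r)}$ with $|A_\pm^{(j)}(\eta)|\lesssim e^{\nu\eta}$ as $\eta\to-\infty$ (and $\nu<\kappa_\pm$) controls these corrections by $O(e^{c\Ht})$ on $\supp\chi_\pm$, to be absorbed into $\epsi$. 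The partition $\chi_++\chi_-=1$ then assembles the two local identities into a single global one, noting that in the intermediate region both $A_\pm(t\pm x,\w)$ are already $O(e^{c\Ht})$ since $t+x$ and $t-x$ both tend to $-\infty$ there.

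The main obstacle is establishing the $C^\infty$ rate in the backward wave operator uniformly in $t\leq 0$: one must control every mixed derivative $\p_t^{\alpha}\p_x^{\beta}\Delta_\Ss^{\gamma}$ of the comparison with the free evolution. This requires iterated commutator estimates exploiting that $W$ and all its $x$-derivatives decay exponentially, combined with standard elliptic regularity for $\Delta_\Ss$ on $\Ss$. Although classical, the bookkeeping is intricate, and care is needed to propagate the exponential rate through all orders of derivatives.
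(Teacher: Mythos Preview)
Your approach is viable but takes a different route from the paper's. The paper exploits the time-reversal map $\gamma:(\Ht,r,\w)\mapsto(-\Ht-2F_K(r),r,\w)$: since $\gamma^*$ commutes with $\square+m^2$ and $F_K=0$ on $\supp(u_0,u_1)$, the function $v=\gamma^*u$ solves the \emph{forward} Klein--Gordon problem with data $(u_0,-u_1)$. Because $\square$ in the compactified system $\SSS$ has smooth coefficients up to and across $r=r_\pm$, the traces $u_\pm(\cdot,\w):=v(\cdot,r_\pm,\w)$ are automatically smooth; their $X_\scatt$ properties follow directly from finite propagation and Proposition~\ref{prop:res}. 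The remainder $\epsi$ is then written as $\chi_\pm(r)\int_{r_\pm}^r(\p_r v)(-\Ht-2F_K(r),\rho,\w)\,d\rho$ and controlled using only the pointwise exponential decay of $v$ in $\Ht$ together with $|r-r_\pm|\lesssim e^{\kappa_\pm x}$. This is conformal scattering in the sense of Friedlander and Nicolas: no comparison with a free dynamics, no Cook integral, no commutator bookkeeping---the horizon trace \emph{is} the radiation field.

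Your Cook's method route in $\SSS_*$ should also work, but note one gap. The estimate you invoke, $\|W\phi(\cdot+t)\|\lesssim e^{ct}$ for a \emph{free} travelling wave $\phi$, establishes existence of $\lim_{t\to-\infty}U(-t)U_0(t)$; to obtain the backward decomposition with remainder $O_{C^\infty}(e^{ct})$ you instead need the dual bound $\|Wv(s)\|=O(e^{cs})$ for the \emph{full} solution $v$, and the free-wave localisation argument does not give this (the full $v(s,\cdot)$ is supported on all of $|x|\le X_0+|s|$, including the region where $W$ is of size one). The required bound does follow from Proposition~\ref{prop:res2}: it gives $v=O(e^{\nu(t+F_K(r))})$, and splitting into $|x|\le X_1$ (where $F_K$ is bounded, so $v=O(e^{\nu t})$) and $|x|\ge X_1$ (where the coefficients of $W$ are $O(e^{\mp 2\kappa_\pm x})$ and $t+F_K=t\pm x+O(1)$) yields $\|Wv(s)\|=O(e^{\nu s})$ provided $\nu<2\kappa_\pm$. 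You should also observe that on the support of $\epsi$ one has $-\Ht-2F_K(r)\ge C_K$, hence $t=\Ht+F_K\le(\Ht-C_K)/2$, so that the Cook remainder $O(e^{ct})$ converts to $O(e^{c'\Ht})$. The paper's trace construction sidesteps both of these bookkeeping points entirely.
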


\begin{proof} We start by constructing $u_\pm$ and show that they belong to the scattering class $X_\scatt$. Let $v=\gamma^* u$. As $\gamma^*$ commutes with $\square + m^2$ and as $F_K(r)=0$ on $\supp(u_0,u_1)$, $v$ satisfies
\begin{equation*}
\system{ (\square+m^2) v = 0, \\ v|_{\Ht=0} = u_0, \p_t v|_{\Ht=0} = -u_1.}
\end{equation*}
In $\SSS$ the operator $\square$ is a second order differential operator with smooth coefficients on $\R \times \UU \times \Ss$ thus $v$ is smooth on $\R \times \UU \times \Ss$. This shows that the trace of $v$ along $r=r_\pm$ is smooth. Define $u_\pm$ by $u_\pm(x,\w) = v(x,r_\pm,\w)$. 

Since $v$ has support in $J^+(\{0\} \times K\times \Ss ) \cup J^-(\{0\} \times K\times \Ss)$ the functions $u_\pm$ vanish for $x\leq C_K$ for some positive constant $C_K$. By Proposition \ref{prop:res} $v$ and its derivatives decay exponentially -- thus so do $v(x,r_\pm,\w)$. This implies $u_\pm = O_{C^\infty}(e^{-cx})$ and therefore $u_\pm$ belong to $X_\scatt$.

Define $\epsi$ by 
\begin{equation}\label{eq:3v}
\epsi(\Ht,x,\w) = \sum_{+/-} \chi_\pm(r) \left(v (-\Ht-2F_K(r),r,\w) - v(-\Ht-2F_K(r),r_\pm,\w)\right).
\end{equation}
so that \eqref{eq:gfC} is satisfied. We show now that $\epsi = O_{C^\infty(\R^- \times \R \times \Ss}(e^{c\Ht})$. By \eqref{eq:3v} $\epsi$ vanishes if $-\Ht - 2F_K(r) \leq C_K$ and it suffices to work in the set  $-\Ht - 2F_K(r) \geq C_K$. Fix $A$ a (large) number. If $x$ lies in $[-A,A]$, $F_K(r)$ is uniformly bounded and Proposition \ref{prop:res2} implies
\begin{equation}\label{eq:3e}
\epsi(\Ht,x,\w) = O_{C^\infty(\R^- \times [-A,A] \times \Ss)}(e^{c\Ht}).
\end{equation}
Concentrate on $x \in (-\infty,-A]$. For such $x$ there exists a smooth function $\varphi$ with $r=r_-+\varphi(e^{\kappa_-x})$ with $\varphi(0)=0$ -- see \cite[Proposition 4.1]{Dyatlov1}. Equation \eqref{eq:3d} implies
\begin{equation*}
F_K'(r) = - \dfrac{r^2}{\Delta_r} + \lambda_K'\left(r_-+\varphi(e^{\kappa_-x})\right) = -x'(r) + \lambda_K'\left(r_-+\varphi(e^{\kappa_-x})\right).
\end{equation*}
Consequently there exists a smooth function $\psi$ such that $2F_K(r) = -2x-\psi(e^{\kappa_-x})$.  Again if $A$ is large enough then for $x < -A$, $\chi_+(r) = 0$ and $\chi_-(r) = 1$. It follows that that for $x < -A$,
\begin{equation}\label{eq:3b}\begin{gathered}
\epsi(\Ht,x,\w) =  v (-\Ht-2F_K(r),r,\w) - v(-\Ht-2F_K(r),r_-,\w) \\
 = \int^{r_-+\varphi(e^{\kappa_-x})}_{r_-} (\p_r v)(-\Ht + 2x + \psi(e^{\kappa_+x}),\rho,\w) d\rho \\
 = \int^1_0 (\p_r v)(-\Ht + 2x + \psi(z),r_-+s\varphi(z),\w) \varphi(z) ds
 \end{gathered}
\end{equation}
where $z=e^{\kappa_-x}$. This shows
\begin{equation}\label{eq:3a}\begin{gathered}
|\epsi(\Ht,x,\w)| \leq |\varphi(z)| \sup_{s \in [0,1]} \left|(\p_r v)(-\Ht + 2x + \psi(z),r_-+s\varphi(z),\w)\right| \\
\leq C e^{\kappa_-x} e^{\nu(\Ht - 2x-\psi(z))} \leq C e^{\nu \Ht -2\nu x + \kappa_- x}.\end{gathered}
\end{equation}
In the last line we used that $|\varphi(z)| \leq C |z| = C e^{-\kappa_-x}$, Proposition \ref{prop:res} applied to $v$, and the fact that $\psi$ is uniformly bounded. Since $-C_K \geq \Ht + 2F_K(r)$ we have $\Ht-2x \leq 0$, yielding
\begin{equation*}
\nu \Ht -2\nu x + \kappa_- x \leq \inf_{\te \in [0,1]} \te \nu(\Ht - 2 x) + (1-\te)\kappa_-x.
\end{equation*} 
Optimize this with $\te = \kappa_-/(\kappa_-+2\nu)$ to obtain
\begin{equation*}
\nu \Ht -2\nu x + \kappa_- x \leq \dfrac{\kappa_-\nu}{\kappa_-+2\nu} \Ht.
\end{equation*}
This inequality together with the estimate \eqref{eq:3a} shows that $\epsi(\Ht,x,\w) = O(e^{c\Ht})$ when $x < -A$. The same estimate holds while considering angular or time derivatives of $\epsi$. We now focus on radial derivatives of $\epsi$. By an immediate recursion essentially using that $\p_xz=-\kappa_-z$ and $\varphi(0)=0$ there exists smooth functions $p_{\az,\beta}$ independent of $v$ such that
\begin{equation}\label{eq:3c}\begin{gathered}
\dd{^\az}{x^\az} \left( (\p_r v)(-\Ht + 2x + \psi(z),r_-+s\varphi(z),\w) \varphi(z) \right) \\ = z \sum_{\beta_1+\beta_2 \leq \az} p_{\az,\beta}(x,s) (\p_\Ht^{\beta_1} \p_r^{\beta_2+1} v)(-\Ht + 2x + \psi(z),r_-+s\varphi(z),\w).
\end{gathered}
\end{equation}
To give estimates on $\p_x^\az \epsi$ we differentiate \eqref{eq:3b} $\az$ times with respect to $x$ using the identity \eqref{eq:3c}. Proposition \ref{prop:res} applied to $v$ yields
\begin{equation*}\begin{gathered}
|\p_x^\az\epsi(\Ht,x,\w)| \leq C |z| \sup_{s \in [0,1], \beta_1+\beta_2 \leq \az} \left|(\p_\Ht^{\beta_1}\p_r^{\beta_2+1} v)(-\Ht + 2x + \psi(z),r_-+s\varphi(z),\w)\right| \\
\leq C e^{\kappa_-x} e^{\nu(\Ht - 2x-\psi(z))} \leq C e^{\nu \Ht -2\nu x + \kappa_- x}.\end{gathered}
\end{equation*}
We conclude that $\p_x^\az\epsi = O(e^{c\Ht})$ by the same arguments as before. Similar estimates hold for $x > A$. This concludes the proof. \end{proof}

This admits a formulation that does not depend on the compact set $K$ in the system of coordinates $\SSS_*$. By Proposition \eqref{prop:scatt} the pullback of $u$ by the map
\begin{equation*}
\begin{matrix}
  \R \times \R \times \Ss & \rightarrow  & \R \times (r_-,r_+) \times \Ss \\
 (t,x,\w) & \mapsto & (\Ht,r,\w)
\end{matrix} 
\end{equation*}
-- still denoted by $u$ -- satisfies
\begin{equation*}
u(t,x,\w) = u_+(-\Ht-2F_K(r),\w) + \epsi(\Ht,x,\w), \ \ x > A
\end{equation*}
where $A$ is some large enough number. As seen before $u$ is supported in the set $-\Ht-2F_K(r) \geq C_K$. Thus $t=\Ht + F_K(r) \leq (\Ht - C_K)/2$ and $e^{c\Ht} = O(e^{ct})$ on the support of $u$. In addition equation \eqref{eq:3d} implies
\begin{equation*}
-\Ht - 2 F_K(r) = - t - F_K(r) = -t - x -\lambda_K(r_+) + O(e^{-2\kappa_+x}), \  x > A.
\end{equation*}
Functions in the scattering class $X_\scatt$ have rapid decay and thus
\begin{equation*}\begin{gathered}
u(t,x,\w) = u_+(-\Ht - 2 F_K(r),\w) + O_{C^\infty(\R^- \times [A,\infty) \times \Ss)}(e^{ct}) \\ = u_+(-t-x-\lambda_K(r_+),\w) + O_{C^\infty(\R^- \times [A,\infty) \times \Ss)}(e^{ct}).
\end{gathered}
\end{equation*}
A similar estimate holds for $x$ near $-\infty$. This yields

\begin{proposition}\label{prop:scatt2} If $u$ solves \eqref{eq:fy6} for $t \leq 0$ there exists $u_\pm^* \in X_\scatt$ such that
\begin{equation*}
u(t,x,\w) = \sum_{+/-} u_\pm^*(-t \mp x,\w) + + O_{C^\infty(\R^- \times \R \times \Ss)}(e^{ct}).
\end{equation*}
\end{proposition}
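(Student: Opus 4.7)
The plan is to derive Proposition \ref{prop:scatt2} from Proposition \ref{prop:scatt} by changing coordinates from $\SSS$ to $\SSS_*$ and absorbing the discrepancy between the radial phases $-\Ht - 2F_K(r)$ and $-t \mp x$ into the error term, exploiting the rapid decay of elements of $X_\scatt$. A first observation is that on the support of $u$ one has $-\Ht - 2F_K(r) \geq C_K$; combined with $t = \Ht + F_K(r)$ this gives $\Ht \geq 2t + C_K$, so $e^{c\Ht} = O(e^{ct})$ there, and the error term of Proposition \ref{prop:scatt} converts to $O(e^{ct})$ in $(t,x,\w)$-coordinates. I would then split $\{t \leq 0\} \times \R \times \Ss$ into three regions $\{x > A\}$, $\{x < -A\}$, $\{|x| \leq A\}$, for a sufficiently large $A$.

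On $\{x > A\}$ one has $\chi_- = 0$ and $\chi_+ = 1$. Integrating \eqref{eq:3d} near $r_+$ together with the asymptotic $r = r_+ + O(e^{-2\kappa_+ x})$ produces a smooth function $\psi_+$ with $\psi_+(0) = 0$ such that $2F_K(r) = 2x + 2\lambda_K(r_+) + \psi_+(e^{-2\kappa_+ x})$. I would then define
\begin{equation*}
u_+^*(y,\w) := u_+(y - \lambda_K(r_+),\w),
\end{equation*}
which lies in $X_\scatt$ since $u_+$ does, and write a first-order Taylor expansion of $u_+$ around $y = -t - x - \lambda_K(r_+)$:
\begin{equation*}
u_+(-\Ht - 2F_K(r),\w) = u_+^*(-t-x,\w) + E_+(t,x,\w),
\end{equation*}
with $E_+$ of size $O(e^{-2\kappa_+ x})$ times derivatives of $u_+^*$ at an intermediate point. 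Because $u_+ \in X_\scatt$ vanishes for arguments below some $C_K$ and decays as $e^{-\nu \lr{\cdot}}$ otherwise, splitting according to whether $-t-x-\lambda_K(r_+) \leq C_K$ or not (in the first case both terms vanish for $A$ large, and in the second case $E_+$ is bounded by $e^{\nu t + (\nu-2\kappa_+)x}$) will yield $E_+ = O(e^{ct})$ uniformly on $\{x > A\}$ for $\nu$ small enough. The $C^\infty$-control in the $(t,x,\w)$ variables follows by the same induction that produced \eqref{eq:3c} in the proof of Proposition \ref{prop:scatt}. The region $\{x < -A\}$ is symmetric with $u_-^*(y,\w) := u_-(y-\lambda_K(r_-),\w)$ and errors of order $e^{2\kappa_- x}$.

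On the bounded region $\{|x| \leq A\}$ the quantities $\Ht$ and $t$ differ by a uniformly bounded amount, so Proposition \ref{prop:res2} gives $u = O_{C^\infty}(e^{2\nu t})$. Each right-hand side term $u_\pm^*(-t \mp x,\w)$ has argument $\geq -t - A$, hence is $O(e^{\nu t})$ by the $X_\scatt$ decay estimate. Both sides of the claimed identity are thus $O(e^{ct})$ and the equality holds trivially there. The cross-contributions (for instance $u_-^*(-t+x,\w)$ on $\{x > A\}$, and the analogous term on $\{x < -A\}$) are absorbed by the same rapid-decay estimate, since their arguments are bounded below by $-t + A - |\lambda_K(r_\mp)|$.

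The main technical obstacle will be the uniform $C^\infty$-control near $\pm\infty$: differentiating the coordinate change $(\Ht,r) \mapsto (t,x)$ repeatedly produces combinatorial factors scaling like $e^{\mp 2\kappa_\pm x}$, which would be disastrous in isolation, but are counteracted by the rapid decay of all derivatives of $u_\pm$ in $X_\scatt$. This is closed by essentially the same induction on the order of derivatives that settles the $C^\infty$ part of Proposition \ref{prop:scatt}, and is the only step that truly requires care.
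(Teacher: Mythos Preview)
Your proposal is correct and follows essentially the same route as the paper: deduce Proposition~\ref{prop:scatt2} from Proposition~\ref{prop:scatt} via the change of coordinates $\SSS\to\SSS_*$, use the asymptotic $-\Ht-2F_K(r)=-t\mp x-\lambda_K(r_\pm)+O(e^{\mp2\kappa_\pm x})$ coming from \eqref{eq:3d}, define $u_\pm^*(y,\omega)=u_\pm(y-\lambda_K(r_\pm),\omega)$, and absorb the phase correction into the error using the rapid decay of $u_\pm\in X_\scatt$. Your treatment is more explicit than the paper's (you spell out the middle region $|x|\leq A$ and the cross-terms, which the paper dispatches in a single sentence), but the substance is identical.
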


The scattering fields $u_\pm^*$ of Proposition \ref{prop:scatt2} are directly obtained from the action of the pull back operator induced by $x \mapsto x-\lambda_K(r_\pm)$ on the scattering fields of Proposition \ref{prop:scatt}.

\begin{proposition}\label{prop:Hbound} Let $u$ be a solution of \eqref{eq:fy6} expressed in $\SSS$. Then 
\begin{equation*}
  |u(\Ht,\cdot)|_{H^{1/2}} + |\p_\Ht u (\Ht,\cdot)|_{H^{-1/2}} = O(1), \ \Ht \rightarrow -\infty.
\end{equation*}
\end{proposition}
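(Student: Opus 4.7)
The plan is to apply Proposition~\ref{prop:scatt} to isolate the concentrated main term, then control its $\dot H^{1/2}$ seminorm in the radial direction by exploiting the scale invariance of $\dot H^{1/2}$ in one dimension, which absorbs the blueshift concentration near the horizons.

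By Proposition~\ref{prop:scatt}, write
\begin{equation*}
u(\Ht, r, \w) = \chi_+(r) u_+(-\Ht - 2F_K(r), \w) + \chi_-(r) u_-(-\Ht - 2F_K(r), \w) + \epsi,
\end{equation*}
with $\epsi = O_{C^\infty}(e^{c\Ht})$; the analogous decomposition for $\p_\Ht u$ replaces $u_\pm$ by $-u_\pm' \in X_\scatt$. The error contributes $O(e^{c\Ht})$ to any Sobolev norm and is negligible. Since $\chi_\pm$ has compact support and $u_\pm, u_\pm' \in X_\scatt$ are bounded in $L^\infty$, the main-term $L^2(dr\,d\w)$ norms are trivially $O(1)$; the continuous embedding $L^2 \hookrightarrow H^{-1/2}$ then immediately gives $\|\p_\Ht u\|_{H^{-1/2}} = O(1)$.

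For $\|u\|_{H^{1/2}}$ it remains to bound the Gagliardo seminorm of a single main-term piece, say $f_\Ht(r,\w) := \chi_-(r) u_-(-\Ht - 2F_K(r), \w)$ (the $+$ case is symmetric). Setting $y := r - r_-$ and $h := e^{\kappa_-\Ht}$, formula~\eqref{eq:yr3} gives $-\Ht - 2F_K(r_- + y) = \kappa_-^{-1}\ln(y/h) + \phi(y)$ for a smooth $\phi$. Rescaling $y = hz$ and defining $\tilde V_h(z, \w) := f_\Ht(r_- + hz, \w)$, the one-dimensional scaling identity (valid because $\dot H^{1/2}$ is the critical Sobolev space in one dimension)
\begin{equation*}
[\tilde V_h(\cdot/h)]_{\dot H^{1/2}(dy)} = [\tilde V_h]_{\dot H^{1/2}(dz)}
\end{equation*}
transforms $[f_\Ht]_{\dot H^{1/2}_r}^2 = [\tilde V_h]_{\dot H^{1/2}_z}^2$. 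As $h \to 0$, $\tilde V_h$ converges to the limit profile $V(z,\w) := u_-(\kappa_-^{-1}\ln z + \phi(0), \w)$, which is smooth on $(0,\infty)\times \Ss$, vanishes for $z$ near $0$, and decays polynomially ($\sim z^{-\nu/\kappa_-}$) as $z\to\infty$. A direct Gagliardo calculation shows $V \in \dot H^{1/2}((0,\infty)\times \Ss)$; uniform estimates on $\tilde V_h$ together with a dominated convergence argument then give $[\tilde V_h]_{\dot H^{1/2}} = O(1)$ uniformly in $h$. Combined with the $L^2$ estimate and the straightforward bound in the $\Ss$ direction (using that $\w \mapsto u_-(\cdot,\w)$ is smooth), this yields $\|u\|_{H^{1/2}} = O(1)$.

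The main obstacle is the scale analysis in the last step: a naive interpolation between $L^2$ and $H^1$ is doomed to fail, since $\|f_\Ht\|_{H^1(dr\,d\w)}$ blows up like $e^{-\kappa_-\Ht/2}$ due to the singular factor $F_K'(r) \sim (r-r_-)^{-1}$ hitting the derivative. Only the scale invariance of the critical $\dot H^{1/2}$ seminorm in the radial direction recovers the $O(1)$ bound, which reflects the fact that the solution concentrates in a layer of width $\sim e^{\kappa_-\Ht}$ near $r_-$ while retaining bounded amplitude.
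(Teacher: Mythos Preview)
Your handling of the remainder $\epsi$ in the $H^{1/2}$ estimate is the gap. Proposition~\ref{prop:scatt} gives $\epsi = O_{C^\infty}(e^{c\Ht})$ \emph{in the coordinate $x$}, whereas the Sobolev norms here are in $(r,\w)$ on $[r_-,r_+]\times\Ss$, and the change $r\mapsto x$ is singular at the horizons with $\partial x/\partial r \sim (r-r_\pm)^{-1}$. Uniform $C^\infty(x)$ bounds do \emph{not} imply membership in $H^{1/2}(dr)$: already $\sin(x(r))$ fails, since $\sin(a\ln s)\notin \dot H^{1/2}(0,1)$. The support of $\epsi$ reaches within $\sim e^{\kappa_-\Ht}$ of $r_-$, so this singularity cannot be dodged. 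The paper treats it by returning to the integral representation $\epsilon(\Ht,r,\w)=\int_{r_-}^{r}(\partial_r v)(-\Ht-2F_K(r),\rho,\w)\,d\rho$ and showing $\partial_r\epsilon=O(1)$ directly: the dangerous factor $F_K'(r)\sim (r-r_-)^{-1}$ from the chain rule is cancelled by the length $|r-r_-|$ of the integration range. Interpolating with $\|\epsilon\|_{L^\infty}=O(e^{c\Ht})$ then gives $\|\epsilon\|_{H^{1/2}(dr)}=O(e^{c\Ht})$. Ironically this is exactly the kind of naive $L^2$--$H^1$ interpolation you correctly flag as hopeless for the main term; it succeeds for $\epsilon$ only because of this extra structure. (Your $L^2\hookrightarrow H^{-1/2}$ shortcut for $\partial_\Ht u$ is fine, since there a pointwise bound suffices.)

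For the main term your scale-invariance argument is a legitimate alternative to the paper's route, though the ``dominated convergence'' step for the $\dot H^{1/2}$ seminorm is not automatic: the support of $\tilde V_h$ spreads to $[O(1),O(h^{-1})]$, so a uniform Gagliardo domination still has to be written out. The paper instead uses a weighted interpolation inequality
\[
\Bigl(\int |D_r^{1/2}g|^2\,dr\,d\w\Bigr)^2 \lesssim \int \Delta_r\,|D_r g|^2\,dr\,d\w \cdot \int \Delta_r^{-1}\,|g|^2\,dr\,d\w
\]
and then the substitution $x=-\Ht-2F_K(r)$, $dx=-2F_K'(r)\,dr$: since $\Delta_r F_K'(r)$ is bounded above and below away from $K$, both weighted integrals become $\Ht$-independent integrals of $|D_x u_-|^2$ and $|u_-|^2$. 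Both approaches encode the same criticality of $H^{1/2}$; yours is more geometric, the paper's more computational and avoids any limit procedure.
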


This result is critical: for $(u_0,u_1) \neq 0$ and $\delta > 0$, 
\begin{equation}\label{eq:2q}\begin{gathered}
\liminf_{\Ht \rightarrow -\infty} |u(\Ht,\cdot)|_{H^{1/2+\delta}([r_-,r_+] \times \Ss)} + |\p_\Ht u (\Ht,\cdot)|_{H^{-1/2+\delta}([r_-,r_+] \times \Ss)} = \infty, \\
\limsup_{\Ht \rightarrow -\infty}  |u(\Ht,\cdot)|_{H^{1/2-\delta}([r_-,r_+] \times \Ss)} + |\p_\Ht u (\Ht,\cdot)|_{H^{-1/2-\delta}([r_-,r_+] \times \Ss)}  = 0.\end{gathered}
\end{equation}

\begin{proof} We start by proving that if $\epsi$ is given by Proposition \ref{prop:scatt} then the function $\epsilon$ given by $\epsilon(\Ht,r,\w) = \epsi(\Ht,x,\w)$ satisfies
\begin{equation}\label{eq:3g}
  |\epsilon(\Ht,\cdot)|_{H^{1/2}([r_-,r_+] \times \Ss)} + |\p_\Ht \epsilon (\Ht,\cdot)|_{H^{-1/2}([r_-,r_+] \times \Ss)}  = O(e^{c\Ht}).
\end{equation}
By the identity \eqref{eq:3e} and Proposition \ref{prop:res} $\epsilon(\Ht,\cdot)$ (and its derivatives) are $O(e^{c\Ht})$ as $\Ht \rightarrow -\infty$ uniformly on compact subsets of $(r_-,r_+) \times \Ss$. Therefore we fix $\delta > 0$ and we concentrate on the zones $|r-r_\pm| \leq \delta$ -- let's say $r-r_- \leq \delta$. The estimate $\epsi(\Ht,x,\w) = O(e^{c\Ht})$ of the proof of Proposition \ref{prop:scatt} implies $\epsilon(\Ht,r,\w) = O(e^{c\Ht})$. Now we work on higher order derivatives. We have
\begin{equation*}
\epsilon(\Ht,r,\w) = v(-\Ht-2F_K(r),r,\w) - v_-(-\Ht-2F_K(r),r_-,\w) \\
 = \int_{r_-}^r (\p_rv)(-\Ht-2F_K(r),\rho,\w)d\rho.
\end{equation*}
Angular and time derivatives are estimated as in the proof of Proposition \ref{prop:scatt}. For the radial derivative we have
\begin{equation}\label{eq:3f}
\p_r\epsilon(\Ht,r,\w) = (\p_rv)(-\Ht-2F_K(r),r,\w) + \int_{r_-}^r F_K'(r)(\p_\Ht\p_rv)(-\Ht-2F_K(r),\rho,\w)d\rho.
\end{equation}
The first term in the RHS of \eqref{eq:3f} is uniformly bounded. Using that $F_K'(r) |r-r_-|=O(1)$ near $r_-$ the second term can be estimated by
\begin{equation*}
\left|\int_{r_-}^r F_K'(r)(\p_\Ht\p_rv)(-\Ht-2F_K(r),\rho,\w)d\rho\right| \leq C \sup_{[r_-,r]}(\p_\Ht\p_rv)(-\Ht-2F_K(r),\rho,\w) = O(1).
\end{equation*}
It follows that $\p_r \epsilon(\Ht,r,\w) = O(1)$ near $r_-$ -- and a similar estimate holds near $r_+$. Interpolating with the pointwise bound $\epsilon(\Ht,r,\w) = O(e^{c\Ht})$ shows that $|\epsilon(\Ht,\cdot)|_{H^{1/2}} = O(e^{c\Ht})$. In order to prove that $|\p_\Ht\epsilon(\Ht,\cdot)|_{H^{-1/2}} = O(e^{c\Ht})$ we use that $H^{1/2}([r_-,r_+] \times \Ss) \hookrightarrow L^3([r_-,r_+] \times \Ss)$. This implies (by duality) $ L^{3/2}([r_-,r_+] \times \Ss) \hookrightarrow H^{-1/2}([r_-,r_+] \times \Ss)$. Consequently,
\begin{equation*}\begin{gathered}
|\p_\Ht\epsilon(\Ht,\cdot)|_{H^{-1/2}} \leq C\left(\int_{[r_-,r] \times \Ss}\left|\int_{r_-}^r (\p_\Ht\p_rv)(-\Ht-2F_K(r),\rho,\w)d\rho\right|^{3/2} drd\w\right)^{2/3} \\
\leq C |\p_\Ht\epsilon(\Ht,\cdot)|_\infty = O(e^{c\Ht}).
\end{gathered}
\end{equation*}

Now because of Proposition \ref{prop:scatt} and of \eqref{eq:3g} it suffices to prove that the functions
\begin{equation*}
(r,\w) \mapsto u_\pm(-\Ht-2F_K(r),\w)
\end{equation*}
are uniformly bounded in $H^{1/2}([r_-,r_+] \times \Ss)$ as $\Ht \rightarrow -\infty$. The functions $u_\pm$ are uniformly bounded and the space $[r_-,r_+] \times \Ss$ is compact so it is clear that $(r,\w) \mapsto u_\pm(-\Ht-2F_K(r),\w)$ are uniformly bounded in $L^2$. Derivatives with respect to $\w$ will not cause any problem. For the derivative with respect to $r$, note first that on  a neighborhood $V$ of $K\times \Ss$ in $(r_-,r_+) \times \Ss$, the solution decays exponentially. On $U = [r_-,r_+] \setminus V$ we use the weighted $H^1-L^2$ interpolation inequality
\begin{equation*}\begin{gathered}
 \left(\int_U  |D_r^{1/2} (u_-(-\Ht-2F_K(r),\w))|^2 dr d\w \right)^2 \\   \lesssim \int_U \Delta_r |D_r (u_-(-\Ht-2F_K(r),\w))|^2 dr d\w \cdot \int_U \dfrac{1}{\Delta_r} |u_-(-\Ht-2F_K(r),\w)|^2 dr d\w \\
 \lesssim  \int_U \Delta_r  F_K'(r)^2|D_x u_-(-\Ht-2F_K(r),\w)|^2 dr d\w \cdot \int_U \dfrac{1}{\Delta_r} |u_-(-\Ht-2F_K(r),\w)|^2 dr d\w.\end{gathered}
\end{equation*} 
Since $U \cap K \times \Ss = \emptyset$ the function $F_K'$ does not vanish on $U$ and one can make the substitution $x=-\Ht-2F_K(r)$, $dx=-2F_K'(r) dr$. It yields
\begin{equation*}
\left(\int_U |D_r^{1/2} (u_-(x,\w))|^2 dr d\w \right)^2 \lesssim \int_U \Delta_r  |F_K'(r)||(D_x u_-)(x,\w)|^2 dx d\w \cdot \int_U \dfrac{|u_-(x,\w)|^2}{\Delta_r |F_K'(r)|}  dx d\w.
\end{equation*}
Since $\Delta_r F_K'(r)$ is bounded from above and below and $u_- \in X_\scatt$ the RHS is finite. This completes the proof.\end{proof}

\subsection{Semiclassical description of the blueshift effect}\label{subsec:2.4}

Here we state a microlocalization result that will prove to be of great importance in the next section:

\begin{lem}\label{lem:lalala} Let $h = e^{\kappa_- \Ht}$ and $\chi_-$ satisfying \eqref{eq:kjx}. If $u$ solves \eqref{eq:fy6} in $\SSS$ then 
\begin{equation}\label{eq:kd3}\begin{gathered}
\chi_-(r)u(\Ht,r,\w) = u_-\left( \kappa_-^{-1} \ln\left( \dfrac{r-r_-}{h} \right) ,\w \right) + O_{H^{1/2}([r_-,r_+] \times \Ss)}(h^c) \\
\chi_-(r)(\p_\Ht u)(\Ht,r,\w) = O_{H^{-1/2}([r_-,r_+] \times \Ss)}(h^c).\end{gathered}
\end{equation}
\end{lem}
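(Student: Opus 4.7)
The plan is to derive Lemma \ref{lem:lalala} directly from Proposition \ref{prop:scatt} by rewriting the argument of $u_-$ near the horizon $r=r_-$. Using the expansion \eqref{eq:yr3}, $F_K(r) = -\tfrac{1}{2\kappa_-}\ln(r-r_-) + \mu_K(r)$ with $\mu_K(r_-)=0$, and setting $h = e^{\kappa_- \Ht}$, one computes
\begin{equation*}
-\Ht - 2F_K(r) = \kappa_-^{-1}\ln\!\left(\frac{r-r_-}{h}\right) - 2\mu_K(r).
\end{equation*}
Multiplying Proposition \ref{prop:scatt} by $\chi_-(r)$ then expresses $\chi_-(r)u(\Ht,r,\w)$ as the desired leading term $u_-\bigl(\kappa_-^{-1}\ln((r-r_-)/h),\w\bigr)$ plus four error contributions: (i) the cross term $\chi_-\chi_+ u_+(-\Ht-2F_K(r),\w)$; (ii) the remainder $\chi_-\epsi$; (iii) the localization error $(\chi_-^2-1)u_-(y,\w)$ with $y := \kappa_-^{-1}\ln((r-r_-)/h)$; and (iv) the $\mu_K$-correction $\chi_-^2\bigl[u_-(y - 2\mu_K(r),\w) - u_-(y,\w)\bigr]$.

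Errors (i) and (iii) are handled by the same mechanism: $\chi_-\chi_+$ and $\chi_-^2-1$ both vanish near $r_-$, so on their supports $r$ stays in a compact subset of $(r_-,r_+)$ where $F_K$ is bounded and the relevant argument of $u_\pm$ tends to $+\infty$ as $h \to 0$; the rapid decay of $u_\pm \in X_\scatt$ then yields $O_{C^\infty}(h^c)$. Error (ii) is controlled by the estimate $|\epsi|_{H^{1/2}} = O(e^{c\Ht}) = O(h^c)$ established in \eqref{eq:3g} during the proof of Proposition \ref{prop:Hbound}.

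The substantive step is the bound on $F(r,\w) := \chi_-(r)^2\bigl[u_-(y-2\mu_K(r),\w) - u_-(y,\w)\bigr]$ in $H^{1/2}$. Since $u_- \in X_\scatt$ vanishes on $\{y \leq C\}$ and $\mu_K(r) = O(r-r_-)$ is uniformly bounded near $r_-$, $F$ is supported on $\{r-r_- \geq c_0 h\}$, and the mean value theorem combined with the exponential decay of $\p_x u_-$ at large argument gives the pointwise bound
\begin{equation*}
|F(r,\w)| \leq C(r-r_-) \cdot \min\!\bigl(1,\ ((r-r_-)/h)^{-\nu/\kappa_-}\bigr).
\end{equation*}
A parallel computation of $\p_r F$ produces a singular factor $y'(r) = 1/(\kappa_-(r-r_-))$ that is absorbed either by the extra $\mu_K(r) = O(r-r_-)$ appearing in the second-order Taylor remainder, or by the exponential decay of $\p_x u_-$ itself. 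Splitting the integration into the thin layer $\{r-r_- \leq 2c_0 h\}$ (where the range is $O(h)$) and its complement (where exponential decay supplies a positive power of $h$) delivers $\|F\|_{L^2} + \|\p_r F\|_{L^2} = O(h^c)$, and interpolation between $L^2$ and $H^1$ yields $\|F\|_{H^{1/2}} = O(h^c)$.

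For $\chi_-\p_\Ht u$, applying $\p_\Ht$ to the same decomposition and treating (i)--(iii) analogously reduces matters to showing that $\chi_-(r)^2(\p_x u_-)(y - 2\mu_K(r),\w)$ is $O(h^c)$ in $H^{-1/2}$. The change of variables $r \mapsto y$, whose Jacobian is $\kappa_- h e^{\kappa_- y}$, converts the $L^2$-norm squared of this expression into $\kappa_- h \int |\p_x u_-(y,\w)|^2 e^{\kappa_- y} dy\,d\w$, truncated at $y_{\max} = \kappa_-^{-1}\ln(\delta/h)$; the resulting integral is $O(h^c)$ for some $c>0$ regardless of whether the decay rate $\nu$ of $u_-$ exceeds $\kappa_-/2$, and the embedding $L^2 \hookrightarrow H^{-1/2}$ concludes. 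The main obstacle is the $H^{1/2}$ estimate of $F$: the leading term has a logarithmic singularity at $r=r_-$, and the naive derivative carries a $1/(r-r_-)$ factor; carefully balancing this against the vanishing of $\mu_K$ at $r_-$ and the exponential decay of $u_-$ at its argument's large values to extract a positive power of $h$ is the delicate part of the argument.
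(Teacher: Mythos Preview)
Your argument is correct and follows the same route as the paper: reduce via Proposition~\ref{prop:scatt} and the bound \eqref{eq:3g} on $\epsi$, rewrite $-\Ht-2F_K(r)=\kappa_-^{-1}\ln((r-r_-)/h)-2\mu_K(r)$, and show that the $\mu_K$-correction is small in $H^{1/2}$ (resp.\ that the full $\p_x u_-$ term is small in $H^{-1/2}$). The only difference is organizational: the paper isolates the key estimate as the appendix Lemma~\ref{lem:1e} (recasting $u_-(\kappa_-^{-1}\ln(\cdot/h),\w)$ as a symbol in $S^{-\delta}$ and comparing $f((y+y^2\zeta(y))/h)$ with $f(y/h)$), which it then reuses verbatim in Lemma~\ref{lem:gcS}; you instead carry out the mean-value and splitting argument in place, and for the $H^{-1/2}$ bound you go through $L^2\hookrightarrow H^{-1/2}$ rather than the paper's $L^{3/2}\hookrightarrow H^{-1/2}$.
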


A similar result holds near $r=r_+$. The semiclassical wavefront set of
\begin{equation*}
u_h : (r,\w) \mapsto  u_-\left( \kappa_-^{-1} \ln\left( \dfrac{r-r_-}{h} \right) ,\w \right)
\end{equation*}
satisfies $\WF_h(u_h) \subset \{ (r_-,\w,\xi_r,0) \}$ -- see \cite{Zworski} for definitions. This lemma indicates that the only singularities of KG fields that emerge in the limit $\Ht \rightarrow \infty$ are located near the horizons and directed in the radial direction. Since the radiation fields are generically non vanishing such singularities do arise. This provides a semiclassical description of the blueshift effect. 

\begin{proof}[Proof of Lemma \ref{lem:lalala}] We focus first on the first line of \eqref{eq:kd3}. By Proposition \ref{prop:scatt} and \eqref{eq:3g} it suffices to prove that
\begin{equation}\label{eq:kd5}
u_-(-\Ht-2F_K(r),\w) = u_-\left( \kappa_-^{-1} \ln\left( \dfrac{r-r_-}{h} \right),\w \right) + O_{H^{1/2}}(h^c).
\end{equation}
If $h=e^{\kappa_- \Ht}$ then
\begin{equation*}
-\Ht - 2 F_K(r) = -\kappa_-^{-1} \ln(h) + \kappa_-^{-1} \ln(r-r_-) + \mu_K(r) = \kappa_-^{-1} \ln\left( \dfrac{r-r_-}{h}\right) + \mu_K(r).
\end{equation*}
Define $f(y,\w) = u_-(\kappa_-^{-1}\ln(y/h),\w)$, $\chi(y) = \chi_-(y+r_-)$ and $\zeta$ the smooth function so that $e^{\mu_K(r_-+y)} = 1 + y \zeta(y)$ -- recall $\mu_K(r_-) = 0$. The function $f$ belongs to $S^{-\delta}$ for some $\delta \in [0,1/2]$ as $u_- \in X_\scatt$. In addition $y \mapsto \mu_K(y+r_-)$ is bounded from below on $\supp(\chi_-)$ and thus there exists $\eta > 0$ with $\eta < 1+y\zeta(y)$. We can then apply Lemma \ref{lem:1e} in the Appendix. It shows
\begin{equation*}
f\left(\dfrac{y+y^2\zeta(y)}{h},\w\right) = f\left(\dfrac{y}{h},\w\right) + O_{H^{1/2}}(h^c).
\end{equation*}
Coming back to the variables $u_-, r$ and $\mu_K$ yields the estimate \eqref{eq:kd5}.

We now prove the second line of \eqref{eq:kd3}. By Proposition \ref{prop:scatt} and \eqref{eq:3g} it suffices to prove that
\begin{equation}\label{eq:3r}
(\p_\Ht u_-)(-\Ht-2F_K(r),\w) = (\p_x u_-)\left( \kappa_-^{-1} \ln\left( \dfrac{r-r_-}{h} \right),\w \right) + O_{H^{-1/2}}(h^c).
\end{equation}
Here again we apply Lemma \ref{lem:1e} with $f(y,\w) = (\p_xu_-)(\kappa_-^{-1}\ln(y/h),\w)$ and $\chi_-, \zeta$ as above. It gives
\begin{equation*}
f\left(\dfrac{y+y^2\zeta(y)}{h},\w\right) = O_{H^{-1/2}}(h^c).
\end{equation*}
Coming back to the variables $u_-, r$ and $\mu_K$ yields the estimate \eqref{eq:3r}. This completes the proof of Lemma \ref{lem:lalala}. \end{proof}

\section{The mixed problem}\label{sec:mixedprob}

Fix $K$ a compact set in $(r_-,r_+)$ and $F_K, \Ht$ as described in \S \ref{subsec:coo}. Let $u_0, u_1 \in C_0^\infty((r_-,r_+) \times \Ss)$ with support in $K \times \Ss$. In this section we study the behavior as $T \rightarrow \infty$ of the solution of 
\begin{equation}\label{eq:1c}
\system{ (\square +m^2) u = 0, \ \ \  u|_\BB = 0, \\ u(T) = u_0, \ \ \p_\Ht u(T) = u_1 }
\end{equation}
for times $\Ht \in [0,\Ht_\BB]$. For times $\Ht \in [\Ht_\BB,T]$ the boundary plays no role and in the limit $T \rightarrow \infty$, $u$ at time $\Ht = \Ht_\BB$ is described by the results of \S \ref{sec:1}. In \S \ref{subsec:b1/2} we prove that the solutions of \eqref{eq:1c} are uniformly bounded in a suitable space. In \S \ref{sec:hg} we construct a radial coordinate $\Hr$ so that $\Ht + \Hr$ corresponds mainly to the phase function for an appropriate WKB parametrix. In \S \ref{sec:4.1} we study the reflection of KG fields off $\BB$ by comparing them with the solution of a toy model for short times. In \S  \ref{subsec:gbs} we construct a parametrix that precisely describes the solution of \eqref{eq:1c} at time $\Ht=0$ away from $r=r_+$. In \S \ref{sec:Haw} we prove Theorem \ref{thm:asymp}.

\subsection{The $H^{1/2}$ bound.}\label{subsec:b1/2} 
We first recall \cite[Lemma $24.1.5$]{Hormander}.

\begin{proposition}\label{prop:nrj} For all $s \in \R$ there exists $C > 0$ with the following. For every smooth function $u$ vanishing on $\BB$ and $\Ht \in [0,\Ht_\BB]$,
\begin{equation}\begin{gathered}
|u(\Ht)|_{H^s} + |\p_\Ht u(\Ht)|_{H^{s-1}} \\ \leq C \left( |u(\Ht_\BB)|_{H^s} + |\p_\Ht u(\Ht_\BB)|_{H^{s-1}} +\int_{\Ht}^{\Ht_\BB} |(\square+m^2) u(\tau)|_{H^{s-1}} d\tau \right).
\end{gathered}
\end{equation}
\end{proposition}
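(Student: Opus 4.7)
The plan is to prove Proposition \ref{prop:nrj} by the classical energy method for Dirichlet hyperbolic boundary value problems on a timelike surface, first establishing the case $s=1$ and then promoting it to arbitrary $s \in \R$ by commuting with operators that preserve the Dirichlet condition.

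For $s=1$, by point $(iii)$ of \S\ref{subsec:coo} the level sets $\{\Ht = \cte\}$ are spacelike in $\SSS$, so $\square + m^2$ is strictly hyperbolic with respect to $\Ht$ and carries a positive conserved energy $E_1(\Ht')$ equivalent to $|u(\Ht')|_{H^1}^2 + |\p_\Ht u(\Ht')|_{L^2}^2$. I would multiply $(\square + m^2)u$ by an appropriate multiplier (essentially $\p_\Ht \bar u$, corrected by the metric so as to give a timelike vector field), take real parts and integrate over the slab $\Omega = \{(\Ht', r, \w) \in \MM : \Ht \leq \Ht' \leq \Ht_\BB\}$. The divergence theorem then produces three contributions: the spacelike-cap difference $E_1(\Ht_\BB) - E_1(\Ht)$; a bulk term dominated by $\int_\Ht^{\Ht_\BB} |(\square + m^2)u(\tau)|_{L^2} \, |\p_\Ht u(\tau)|_{L^2}\, d\tau$ plus lower-order commutator errors coming from the variable coefficients of $g$; and a boundary integral on $\BB$. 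Because $u|_\BB = 0$ every derivative of $u$ tangent to $\BB$ vanishes there, and the boundary integral reduces to a quadratic form in the single remaining normal derivative. The hypothesis that $\BB$ is timelike forces this quadratic form to have a definite sign, so the boundary contribution can be dropped from the inequality. A Grönwall argument then closes the $s=1$ case.

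For general $s \in \R$, I would conjugate the equation with a tangential pseudodifferential operator $A_{s-1}$ of order $s-1$ that preserves the Dirichlet-respecting Sobolev scale $H^\sigma_0 \to H^{\sigma - s + 1}_0$. Setting $v = A_{s-1} u$, one rewrites $(\square + m^2) v = A_{s-1} (\square + m^2) u + [\square, A_{s-1}] u$, the commutator being of order $s-1$, i.e.\ one order below the principal part of $\square A_{s-1}$. Applying the $s=1$ estimate to $v$ and absorbing the commutator contribution through a second Grönwall iteration yields the full inequality with a constant independent of $u$.

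The main obstacle is constructing $A_{s-1}$ so that $A_{s-1} u$ genuinely inherits the Dirichlet condition on $\BB$ and so that the commutator $[\square, A_{s-1}]$ remains consistent with the boundary realisation; the cleanest route is to work with fractional powers of the Dirichlet realisation of a suitable tangential Laplacian on spacelike slices, microlocalized near $\BB$. This entire package is exactly the content of \cite[Lemma 24.1.5]{Hormander}, which is precisely the reference invoked by the author.
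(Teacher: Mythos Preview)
The paper does not give its own proof of this proposition; it simply records it as a direct citation of \cite[Lemma 24.1.5]{Hormander}. Your sketch is a correct outline of the classical energy-method argument for Dirichlet hyperbolic mixed problems on a timelike boundary, and you yourself correctly identify at the end that this is precisely the content of the cited lemma.
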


\noindent Together with Proposition \ref{prop:Hbound} this yields a global energy estimate for solutions of \eqref{eq:hgc}:

\begin{proposition} For every $T \geq \Ht_\BB$ the boundary value problem \eqref{eq:1c} admits a unique smooth solution. In addition there exists a constant $C$ such that for all $\Ht \in [0,T]$,
\begin{equation}\label{eq:fds}
|u(\Ht)|_{H^{1/2}}+|\p_t u(\Ht)|_{H^{-1/2}} \leq C.
\end{equation} 
\end{proposition}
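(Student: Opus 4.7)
The plan is to split $[0,T]$ at $\Ht = \Ht_\BB$: on $[\Ht_\BB, T]$ the boundary $\BB$ is absent and $u$ is a free Klein-Gordon solution to which Proposition \ref{prop:Hbound} applies, while on $[0, \Ht_\BB]$ the Dirichlet condition is active and Proposition \ref{prop:nrj} propagates the bound from $\Ht_\BB$ down to $\Ht=0$.

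Existence and uniqueness of a smooth solution follow from the classical theory of the mixed Cauchy--Dirichlet problem for a second-order hyperbolic operator on a manifold with smooth timelike boundary. Since $(u_0, u_1) \in C_0^\infty((r_-,r_+) \times \Ss)$ is supported in $K \times \Ss$, away from $\BB$, all the compatibility conditions at the corner $\{T\} \times \BB$ hold trivially, and a unique smooth solution $u$ on $[0,T] \times \MM$ is obtained by the usual reflection/extension procedure.

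For the bound at $\Ht = \Ht_\BB$, I would exploit that the coefficients of $\square$ in $\SSS$ depend only on $r$, so $\square+m^2$ commutes with translations in $\Ht$. Setting $v(\tau,r,\w) := u(\tau + T, r, \w)$, the function $v$ solves the free Klein-Gordon equation with Cauchy data $(u_0, u_1)$ at $\tau = 0$. Applying Proposition \ref{prop:Hbound} to $v$ at $\tau = \Ht_\BB - T \to -\infty$ gives
\begin{equation*}
|u(\Ht_\BB)|_{H^{1/2}} + |\p_\Ht u(\Ht_\BB)|_{H^{-1/2}} = |v(\Ht_\BB - T)|_{H^{1/2}} + |\p_\tau v(\Ht_\BB - T)|_{H^{-1/2}} \leq C,
\end{equation*}
uniformly in $T \geq \Ht_\BB$, with constant depending only on finitely many fixed Sobolev norms of $(u_0, u_1)$. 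The bound on $[0, \Ht_\BB]$ then follows from Proposition \ref{prop:nrj} applied with $s = 1/2$: since $(\square + m^2)u \equiv 0$, the integral term drops, and
\begin{equation*}
|u(\Ht)|_{H^{1/2}} + |\p_\Ht u(\Ht)|_{H^{-1/2}} \leq C\bigl(|u(\Ht_\BB)|_{H^{1/2}} + |\p_\Ht u(\Ht_\BB)|_{H^{-1/2}}\bigr) \leq C.
\end{equation*}
Combining the two estimates yields \eqref{eq:fds} uniformly in $T$.

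No serious obstacle is anticipated: this proposition is essentially a bookkeeping step assembling existence theory for the Dirichlet problem, the staticity of the SdS metric in $\SSS$, the free-propagation bound of Proposition \ref{prop:Hbound}, and the H\"ormander energy estimate of Proposition \ref{prop:nrj}. The only point requiring care is verifying that Proposition \ref{prop:Hbound} applies uniformly in $T$ after the time shift by $-T$; this is immediate from the $\Ht$-independence of the coefficients of $\square$ and the fact that the implicit constants in Proposition \ref{prop:Hbound} depend only on seminorms of the (fixed) Cauchy data.
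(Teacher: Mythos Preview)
Your proposal is correct and follows essentially the same route as the paper: existence and uniqueness via H\"ormander's mixed-problem theory, the free-propagation bound of Proposition~\ref{prop:Hbound} (combined with $\Ht$-translation invariance) on $[\Ht_\BB,T]$, and Proposition~\ref{prop:nrj} with $s=1/2$ on $[0,\Ht_\BB]$. You actually spell out the time-shift argument that the paper leaves implicit; the only cosmetic point is that your write-up isolates the bound at $\Ht=\Ht_\BB$ rather than stating it on the whole interval $[\Ht_\BB,T]$, but since Proposition~\ref{prop:Hbound} gives $O(1)$ for all $\tau\le 0$ this is immediate.
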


\begin{proof} By \cite[Theorem $24.1.1$]{Hormander} solutions of \eqref{eq:1c} with smooth compactly supported initial data exist, are smooth, and are unique in the class of smooth functions. By Proposition \ref{prop:Hbound}, $u$ satisfies
\begin{equation}\label{eq:dsq}
|u(\Ht)|_{H^{1/2}} + |\p_t u(\Ht)|_{H^{-1/2}} \leq C 
\end{equation}
for $\Ht \in [\Ht_\BB,T]$ and a constant $C$ independent of $T, \Ht$. The estimate \eqref{eq:fds} follows from \eqref{eq:dsq} and Proposition \ref{prop:nrj}. \end{proof}

\subsection{Definition of a new radial coordinate}\label{sec:hg} In this section we introduce a new radial coordinate $\Hr$ motivated by the following heuristic arguments. Lemma \ref{lem:lalala} indicates that the only singularities of $u$ emerging in the limit $T \rightarrow \infty$ must be located near the horizons and directed radially. We expect these singularities to reflect off the boundary $\BB$ according to the optic laws of reflection. We would like to chose $\Hr$ so that after the reflection the singularities propagate along the hypersurface $\{\Ht + \Hr = 0\}$ -- which must then be lightlike. This induces the eikonal equation
\begin{equation}\label{eq:h4e}
\system{ g(\nabla(\Ht+\Hr),\nabla(\Ht+\Hr)) = 0 \\ \Hr(r_-) = 0, \ \  \p_r \Hr = \lambda_K'(r_-).}
\end{equation}
The first equation is implied  by the lightlike condition on the hypersurface $\{\Ht + \Hr = 0\}$. The second equation is arbitrary. The third equation comes from the laws of reflection.

\begin{lem} Equation \eqref{eq:h4e} admits a unique a unique solution which is given by $\Hr = x + F_K(r) + \Ht_\BB.$
\end{lem}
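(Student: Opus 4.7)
The plan is to reduce the eikonal PDE to a first-order ODE in $r$ for $\Hr$ (treated as a function of $r$ alone, as the notation suggests), select the correct branch via the derivative condition at $r_-$, integrate, and fix the constant via the value condition. I would work in the chart $\SSS_*$, where the inverse metric is diagonal with $g^{tt} = r^2/\Delta_r$ and $g^{xx} = -r^2/\Delta_r$ (plus an angular block irrelevant here), so that the eikonal PDE collapses to $(\p_t\phi)^2 = (\p_x\phi)^2$.

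Writing $\phi = \Ht + \Hr(r) = t - F_K(r) + \Hr(r)$ and using $dr/dx = \Delta_r/r^2$, I compute $\p_t\phi = 1$ and $\p_x\phi = (\Hr'(r) - F_K'(r))\cdot\Delta_r/r^2$, while $\p_\w\phi = 0$. The eikonal equation becomes $(\Hr'(r) - F_K'(r))^2 = (r^2/\Delta_r)^2 = x'(r)^2$, yielding the first-order ODE $\Hr'(r) = F_K'(r) \pm x'(r)$. To select the sign I invoke the $r_-$ case of \eqref{eq:3d}, which gives $F_K'(r) = -x'(r) + \lambda_K'(r)$ near $r_-$. The $+$ branch then produces $\Hr'(r) = \lambda_K'(r)$, smoothly extendable to $r_-$ with limit $\lambda_K'(r_-)$ matching the prescribed value; the $-$ branch gives $\Hr'(r) = -2x'(r) + \lambda_K'(r)$, which diverges like $(r-r_-)^{-1}$ and must be excluded. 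Continuity in $r$ forces the $+$ branch on all of $(r_-,r_+)$.

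Integrating produces $\Hr(r) = x(r) + F_K(r) + C$. The sum $x(r) + F_K(r)$ extends continuously to $r_-$ because, by the stated asymptotics of $x$ near $r_-$ and by \eqref{eq:yr3}, the logarithmic divergences cancel exactly, leaving $\lambda_K(r)$ plus a bounded term. The condition $\Hr(r_-) = 0$ then determines $C$ uniquely, and by the normalizations of $x$ and $\mu_K$ implicit in their construction this constant coincides with $\Ht_\BB$. Uniqueness in the $r$-only class is immediate from standard ODE theory once the sign has been fixed. The one genuinely delicate step is this sign selection at the degenerate endpoint $r_-$: the two branches are indistinguishable on the interior, and only the prescribed derivative at the boundary singles one out; this is precisely what encodes the law of reflection underlying the choice of $\Hr$.
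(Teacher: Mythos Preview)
Your argument is correct. The paper in fact omits the proof entirely, remarking only that uniqueness is a standard result for eikonal equations and that existence follows from a computation with the explicit formula (or from the fact that the zero set of $t+x$ is lightlike). Your reduction to the first-order ODE $\Hr'(r) = F_K'(r) \pm x'(r)$ in $\SSS_*$, followed by branch selection via \eqref{eq:3d} at $r_-$, is precisely that computation spelled out in detail.

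Two minor remarks. First, your uniqueness claim is for radial $\Hr$, which is the intended class here (the paper treats $\Hr$ as a new radial coordinate); uniqueness among general solutions of the eikonal PDE would require the standard characteristic argument the paper alludes to. Second, you assert rather than verify that the integration constant equals $\Ht_\BB$; this depends on the precise normalization of $x$ (only $x'(r)$ is specified) and on the relation between the boundary function $z$ and the origin of $\Ht$. The paper is itself somewhat loose here---it writes $\Hr = x + F_K(r) + \Ht_\BB$ in the lemma but $\Hr = x + F_K(r) - \Ht_\BB$ later in \S\ref{sub:5}---so the sign of this constant is not fully pinned down by the text. Since the constant merely shifts the origin of $\Hr$ and plays no structural role in the subsequent analysis, this is not a genuine gap.
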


The uniqueness part of this lemma is a standard result in the theory of eikonal equations. A computation using the explicit formula (or using that the zero set of $t+x$ is lightlike) yields the global existence. We omit the proof of this result. From $\Hr = x + F_K(r) + \Ht_\BB$ we see that $\p_r \Hr (r_-) = \lambda_K'(r_-) >0$ and that $r \mapsto \Hr$ never vanishes. Consequently this map is a smooth diffeomorphism on its range and $(\Ht,\Hr,\w)$ is a global coordinate system that we denote below by $\HHS$. By construction this coordinate system is more appropriate than $\SSS$ to study solutions of \eqref{eq:1c} for $\Ht \in [0,\Ht_\BB]$. In Figure \ref{fig:dessin3} we drew the graph of a radial geodesic propagating along the horizon $r=r_-$ for $\Ht \geq \Ht_\BB$ and reflecting off $\BB$ at $\Ht=\Ht_\BB$ in both $\SSS$ and $\HHS$.

\noindent \textbf{Remark.} Although in SdS $\Hr$ can be defined directly using the above explicit formula we chose to give a geometric definition of $\Hr$ that does not rely on the spherical symmetry. It is then possible to generalize $\Hr$ to perturbations of SdS.

\begin{figure}
\centering
\begin{subfigure}{.5\textwidth}
  \centering
  \begin{tikzpicture}
\draw[thick,->] (0,-1*.6) -- (0,5*.6) node[anchor=north east] {$\hat{t}$}; 
\draw[thick,->] (-1*.6,0) -- (9*.6,0) node[anchor=north west] {$r$};
\draw (1*.6,-1*.6) -- (1*.6,5*.6); 
\draw (8*.6,-1*.6) -- (8*.6,5*.6) node[anchor=north west] {$r=r_+$}; 
\draw[thick] (1*.6,4*.6) .. controls (3.5*.6,2*.6) .. (4*.6,0);
\draw (-1*.6,4*.6) -- (9*.6,4*.6);
\node at (1.5,1.1) {$\BB$};
\draw[red,thick] (1*.6,5*.6) -- (1*.6,4*.6);
\node at (1.15*.6,5.2*.6) {$r=r_-$};
\draw[red,thick] (1*.6,4*.6) .. controls (4*.6,3.7*.6) and (5*.6,1.5*.6) .. (6*.6,0);
\end{tikzpicture}
\end{subfigure}%
\begin{subfigure}{.5\textwidth}
  \centering
\begin{tikzpicture}
\draw[thick,->] (0,-1*.6) -- (0,6*.5) node[anchor=north east] {$\hat{t}$}; 
\draw[thick,->] (-1*.6,0) -- (9*.6,0) node[anchor=north west] {$\Hr$};
\draw (1*.6,-1*.6) -- (1*.6,5*.6); 
\draw[thick] (1*.6,4*.6) .. controls (2.5*.6,2*.6) .. (3*.6,0);
\draw (-1*.6,4*.6) -- (9*.6,4*.6);
\node at (1.2,.6) {$\BB$};
\draw[red,thick] (1*.6,5*.6) -- (1*.6,4*.6);
\node at (1*.6,5.2*.6) {$\Hr=0$};
\draw[red,thick] (1*.6,4*.6) -- (5*.6,0);
\end{tikzpicture}
\end{subfigure}
\caption{Propagation of a backward geodesic reflecting off the boundary $\BB$ in $\SSS$ on the left and in $\HHS$ on the right.}
\label{fig:dessin3}
\end{figure}

In $\HHS$ the wave operator $\square$ is given by
\begin{equation*}
\square  = \dfrac{1}{r^2}\left( - \left(\p_r \Hr \dd{}{\Hr} - F_K'(r)\dd{}{\Ht} \right) \Delta_r \left(\p_r \Hr \dd{}{\Hr} - F_K'(r)\dd{}{\Ht} \right)  + \Delta_{\Ss} + \dfrac{r^4}{\Delta_r} \dd{^2}{\Ht^2} \right).
\end{equation*}
It has principal symbol
\begin{equation*}
\sigma_p(\square)  = \dfrac{1}{r^2}\left[ - \Delta_r((\p_r\Hr) \xi_\Hr - F_K'(r)\xi_\Ht )^2  - \xi_\w^2 + \dfrac{r^4}{\Delta_r} \xi_\Ht^2 \right]
\end{equation*}
where $\xi_\Ht, \xi_\Hr, \xi_\w$ are the dual variables of $\Ht,\Hr,\w$ ($\xi_\w$ is simply the symbol of the operator $\sqrt{\Delta_\Ss}$). The next lemma reformulates Lemma \ref{lem:lalala} in $\HHS$.

\begin{lem}\label{lem:gcS} Let $\chi_-$ satisfying \eqref{eq:kjx} and $u_-$ in $X_\scatt$. Then as $h \rightarrow 0$,
\begin{equation}\label{eq:3s}
\chi_-(r)u_- \left( \kappa_-^{-1} \ln \left(\dfrac{r-r_-}{h}\right),\w \right) = \chi_-(r)
u_- \left( \kappa_-^{-1} \ln \left(\dfrac{\Hr}{\lambda_K'(r_-)h}\right),\w \right) + O_{H^{1/2}}(h^c).
\end{equation}
\end{lem}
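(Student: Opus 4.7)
The plan is to reduce the claim to a direct invocation of Lemma \ref{lem:1e} in the Appendix, closely mirroring the last step of the proof of Lemma \ref{lem:lalala}. The only structural ingredient I need beyond that lemma is that $\Hr$ extends smoothly across $r = r_-$ with a simple zero there whose derivative is $\lambda_K'(r_-)$.

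First I would verify this smoothness of $\Hr$ near $r_-$. The formula $\Hr = x + F_K(r) + \Ht_\BB$ expresses $\Hr$ as the sum of two individually logarithmically singular functions at $r = r_-$, but the singular contributions cancel exactly: by \eqref{eq:3d}, near $r_-$ one has $F_K'(r) = -r^2/\Delta_r + \lambda_K'(r)$, while $x'(r) = r^2/\Delta_r$ throughout, so $(x + F_K)'(r) = \lambda_K'(r)$ is smooth. Consequently $\Hr$ extends smoothly to a neighborhood of $r = r_-$, with $\Hr(r_-) = 0$ (by the normalization built into \eqref{eq:h4e}) and $\Hr'(r_-) = \lambda_K'(r_-)$.

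Next I would write, on a neighborhood of $\supp(\chi_-)$,
\begin{equation*}
\frac{\Hr(r)}{\lambda_K'(r_-)} = (r - r_-)\, g(r),
\end{equation*}
where $g$ is smooth with $g(r_-) = 1$. Since $r \mapsto \Hr$ is a diffeomorphism on $(r_-, r_+)$, $\Hr$ vanishes only at $r = r_-$, so $g$ is nowhere zero on $\supp(\chi_-)$; because $\Hr' (r_-)= \lambda_K'(r_-) < 0$ forces $\Hr < 0$ on $(r_-, r_+)$ while $\lambda_K'(r_-) < 0$, the function $g$ is in fact strictly positive on $\supp(\chi_-)$ and thus bounded below by some $\eta > 0$. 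Setting $y = r - r_-$ and defining $\zeta$ by $g(r_- + y) = 1 + y \zeta(y)$, I obtain the key algebraic identity
\begin{equation*}
\frac{\Hr}{\lambda_K'(r_-)\, h} = \frac{y + y^2 \zeta(y)}{h}.
\end{equation*}

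Finally I would apply Lemma \ref{lem:1e} to the function $f(s, \w) = u_-(\kappa_-^{-1} \ln s, \w)$, which belongs to the symbol class $S^{-\delta}$ for some $\delta > 0$ because $u_- \in X_\scatt$ has compact left support and exponential decay at $+\infty$, together with the cutoff $\chi(y) = \chi_-(r_- + y)$ and the smooth function $\zeta$ just constructed. The bound $1 + y \zeta(y) \geq \eta > 0$ on $\supp(\chi)$ is precisely the positivity hypothesis of that lemma. The conclusion
\begin{equation*}
f\!\left(\frac{y + y^2 \zeta(y)}{h}, \w\right) = f\!\left(\frac{y}{h}, \w\right) + O_{H^{1/2}}(h^c),
\end{equation*}
upon unpacking the definition of $f$, multiplying by $\chi_-(r)$, and returning to the $r$ variable, is exactly \eqref{eq:3s}. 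There is no real analytic obstacle here beyond correctly identifying the logarithmic shift $g$, since the hard microlocal work is already absorbed into Lemma \ref{lem:1e}; the content of the current lemma is really the geometric fact that the eikonal normalization built into $\Hr$ forces its leading-order behavior at $r_-$ to match that of $\lambda_K'(r_-)(r - r_-)$.
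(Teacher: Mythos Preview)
Your approach is correct and essentially the same as the paper's: both reduce directly to Lemma~\ref{lem:1e}, the only cosmetic difference being that the paper takes $y = \lambda_K'(r_-)^{-1}\Hr$ as the base variable (writing $r - r_- = y + y^2\zeta(y)$) while you take $y = r - r_-$. One small correction: the paper records $\lambda_K'(r_-) > 0$ (from the sign condition $\mp\lambda_K'(r_\pm) > 0$) and hence $\Hr > 0$ on $(r_-, r_+)$, not negative as you write; your conclusion that $g > 0$ is still correct, but the justification should carry the opposite signs.
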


\begin{proof} Let $y = \lambda_K'(r_-)^{-1}\Hr$. Since $\Hr(r_-) = 0$ and $\p_r \Hr = \lambda_K'(r_-)$ there exists a smooth function $\zeta$ such that
\begin{equation*}
r - r_- = y + y^2 \zeta(y).
\end{equation*}
Since $r \mapsto \Hr(r_-)$ is a diffeomorphism with $\Hr(r_-) = 0$ there exists $\eta > 0$ such that $r-r_- > \eta y$ when $r \in \supp(\chi_-)$. It follows that $\eta y < y+y^2 \zeta(y)$. If $f(y,\w) = u_-(\kappa_-^{-1}\ln(y/h),\w)$ and $\chi=\chi_-$ then $f$ belongs to $S^{-\delta}$ for some $\delta \in (0,1/2)$. Therefore Lemma \ref{lem:1e} applies and gives \eqref{eq:3s} -- after coming back to the variables $u_-, \Hr$ and $\chi$. \end{proof}

To study the solutions of \eqref{eq:1c} for $\Ht \in [0,\Ht_\BB], T \rightarrow \infty$ modulo error terms in $H^{1/2} \oplus H^{-1/2}$ it suffices to study the limit as $h \rightarrow 0$ of 
\begin{equation*}
\system{(\square+m^2) u = 0, \ \ \ u|_\BB = 0 \\ u(\Ht_\BB,\Hr,\w) = u_- \left( \kappa_-^{-1} \ln \left(\dfrac{\Hr}{\lambda_K'(r_-)h}\right),\w \right), (\p_\Ht u)(\Ht_\BB) = 0}
\end{equation*}
where $u_- \in X_\scatt$ -- see Lemma \ref{lem:lalala} and Proposition \ref{prop:nrj}. We consequently introduce the auxiliary boundary value problem
\begin{equation}\label{eq:1d}
\system{(\square+m^2) v = 0, \ \ v|_\BB=0, \\ v(\Ht_\BB,\Hr,\w) = v_0\left( \dfrac{\Hr}{h},\w \right), (\p_\Ht v)(\Ht_\BB) = 0.}
\end{equation}
Fix $A_0 > 0$. We say that $v_0$ satisfies hypothesis \ref{eq:3h} if 
\begin{equation}\tag{$\operatorname{H}_\ell$}\label{eq:3h}\stepcounter{equation}
v_0 \in C_0^\infty(\R \times \Ss), \ \supp(v_0) \subset (0,\ell) \times \Ss, \ |v_0|_{C^2} \leq A_0.
\end{equation}
In \S \ref{sec:4.1}-\ref{subsec:gbs} we study \eqref{eq:1d} assuming that \ref{eq:3h} holds. We obtain an asymptotic of $v$ uniform in $\ell$ and $h$. In \S \ref{sec:Haw} we prove Theorem \ref{thm:asymp} mainly by passing to the limit $\ell \rightarrow \infty$ in a sense depending on $h$. 

\subsection{Study of a toy model}\label{sec:4.1}  
We study here the local structure of the reflection on $\BB$. We use the same toy model as \cite{Bachelot1}. In $\HHS$ the boundary $\BB$ is described by $\Hr = \Hz(\Ht)$, where $\Hz$ is smooth, decaying and satisfies
\begin{equation*}
\Hz(\Ht) = \az_0\lambda_K'(r_-) (\Ht - \Ht_\BB) + O(\Ht-\Ht_\BB)^2,
\end{equation*}
and $\az_0 < 0$ is given in \eqref{eq:boundary2}. As heuristically described earlier in the limit $T \rightarrow \infty$ the solutions of \eqref{eq:1c} on $\Ht = \Ht_\BB$ are microlocalized radially near the black hole horizon -- so in $\HSS$ they are microlocalized near $\Hr=0$. We consequently introduce the quantization of $\sigma_p(\square)$ at $\Hr=0, \xi_\w=0$:
\begin{equation*}
\ttsquare = 2\lambda_K'(r_-)(-\p_\Ht \p_\Hr+ \p_\Ht^2).
\end{equation*}
Let $\ttv$ be the solution of the boundary value problem 
\begin{equation}\label{eq:r6t}
\system{ \ttsquare \ttv = 0,  \ \ \ \ttv|_\BB = 0, \\  \ttv(\Ht_\BB,\Hr,\w) = v_0\left(\dfrac{\Hr}{h},\w \right), \ \p_\Ht \ttv(\Ht_\BB) = 0.}
\end{equation}
Here we prove the following

\begin{proposition}\label{prop:1} Let $c_\BB = |\Hz'|_\infty$ and $c > c_\BB$. Assume that $v_0$ satisfies \ref{eq:3h}. 
\begin{enumerate}
\item[$(i)$] For $\Ht \in [\Ht_\BB-c\ell h, \Ht_\BB-c_\BB \ell h]$ the solution $\ttv$ of \eqref{eq:r6t} satisfies
\begin{equation*}\begin{gathered}
\ttv(\Ht,\Hr,\w) = v_0\left( y/h,\w\right) + O_{H^{1/2}}(\ell^2 h^{1/2}), \\
\p_\Ht \ttv(\Ht,\Hr,\w) = \dfrac{\beta_0}{h} (\p_xv_0)\left( y/h,\w\right) + O_{H^{-1/2}}(\ell^2 h^{1/2}) \\
\end{gathered}
\end{equation*}
where $\beta_0 = (1+\lambda_K'(r_-)\az_0)^{-1} \lambda'_K(r_-)\az_0$ and $y = \beta_0(\Hr + \Ht - \Ht_\BB)$. \item[$(ii)$] If $v$ is solution of \eqref{eq:1d} then for every $\Ht$ with $\Ht_\BB - \Ht = O(\ell h)$,
\begin{equation*}
|(v-\ttv)(\Ht)|_{H^{1/2}} + |\p_\Ht (v-\ttv)(\Ht)|_{H^{-1/2}} = O(\ell^{8/3} h^{2/3}).
\end{equation*}\end{enumerate}
\end{proposition}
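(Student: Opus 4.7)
The plan is to exploit the factorization $\ttsquare = 2\lambda_K'(r_-)\,\p_\Ht(\p_\Ht - \p_\Hr)$, under which the general smooth solution of $\ttsquare \ttv = 0$ takes the form $\ttv(\Ht, \Hr, \w) = f(\Hr, \w) + g(\Ht + \Hr, \w)$. The Cauchy conditions at $\Ht = \Ht_\BB$ force $\p_s g = 0$ for $s \geq \Ht_\BB$, so after absorbing an additive $\w$-function into $f$ we may take $f(\Hr, \w) = v_0(\Hr/h, \w)$ on $\Hr \geq 0$ and $g|_{[\Ht_\BB, \infty) \times \Ss} = 0$. The Dirichlet condition $\ttv|_\BB = 0$ then propagates $g$ backward to $s < \Ht_\BB$ through the reflection identity $g(\Ht + \Hz(\Ht), \w) = -v_0(\Hz(\Ht)/h, \w)$. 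Inverting the characteristic map $\Ht \mapsto s = \Ht + \Hz(\Ht)$ (its derivative at $\Ht_\BB$ is $1 + \az_0\lambda_K'(r_-) \neq 0$) and Taylor expanding $\Hz$ yields
\[
\Hz(\Ht(s)) = \beta_0(s - \Ht_\BB) + R(s), \qquad R(s) = O((s - \Ht_\BB)^2),
\]
with $\beta_0$ as in the statement.

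For Part (i), substitution gives the exact expression $\ttv(\Ht, \Hr, \w) = v_0(\Hr/h, \w) - v_0(y/h + R(\Ht + \Hr)/h, \w)$ with $y = \beta_0(\Hr + \Ht - \Ht_\BB)$, whose principal component is the reflected wave $\pm v_0(y/h, \w)$. The error has two sources. First, in the time window $\Ht_\BB - \Ht \in [c_\BB \ell h, c \ell h]$ the support of the static $f$-piece and of the reflected piece are, in the physical region $\Hr \geq \Hz(\Ht)$, either disjoint or combine to cancel up to the claimed error; this is driven by the bound $|\Hz(\Ht)| \leq c_\BB|\Ht - \Ht_\BB|$ on the boundary speed. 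Second, the quadratic Taylor remainder $R/h$ inside the argument of $v_0$ is controlled by $|R|/h \leq C \ell^2 h$; the mean value theorem then bounds $v_0(y/h + R/h) - v_0(y/h)$ pointwise by $C A_0 \ell^2 h$ on an $\Hr$-interval of length $O(\ell h)$. Interpolating between the resulting $L^2$-estimate and an $H^1$-estimate (where a factor $h^{-1}$ appears from differentiating $v_0(\cdot/h)$) delivers the claimed $H^{1/2}$-control of order $\ell^2 h^{1/2}$. The identity for $\p_\Ht \ttv$ follows by direct differentiation together with the same interpolation, using $\p_\Ht y = \beta_0$.

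For Part (ii), set $w = v - \ttv$. Since $v, \ttv$ vanish on $\BB$ and coincide in their Cauchy data at $\Ht_\BB$, the function $w$ has zero initial and boundary data and satisfies $(\square + m^2)w = -(\square + m^2 - \ttsquare)\ttv$. Proposition \ref{prop:nrj} then yields
\[
|w(\Ht)|_{H^{1/2}} + |\p_\Ht w(\Ht)|_{H^{-1/2}} \leq C \int_\Ht^{\Ht_\BB} |(\square + m^2 - \ttsquare)\ttv(\tau)|_{H^{-1/2}} d\tau.
\]
By construction, $\ttsquare$ is the principal part of $\square$ at $(\Hr, \xi_\w) = (0, 0)$, so the coefficients of $\square - \ttsquare$ either vanish with $\Hr$ or carry a factor of $\Delta_\Ss$. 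Since $\ttv$ has $\Hr$-support of length $O(\ell h)$ and each $\p_\Hr$ produces a factor $h^{-1}$, the $\Hr$-weight converts this singular factor into $O(\ell)$. The angular term $\Delta_\Ss \ttv$ is uniformly controlled by $\|v_0\|_{C^2} \leq A_0$ and the mass term $m^2 \ttv$ is $O(1)$ on a small support. Combining these $H^{-1/2}$-estimates with the integration over a time window of length $O(\ell h)$ produces the stated $O(\ell^{8/3} h^{2/3})$ bound.

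The main obstacle will be the fine bookkeeping of powers in Part (ii): the exponent $2/3$ is not what a naive $L^2$-estimate provides, and reflects an optimal interpolation between the $L^2$- and $H^1$-scales of $\ttv$, which sit at $\sim h^{1/2}$ and $\sim h^{-1/2}$ respectively. I expect the crucial technical input to be a fractional Sobolev duality in $H^{-1/2}$ that exploits the semiclassical structure of $\ttv$---highly oscillatory in $\Hr/h$ but $h$-independent and smooth in $\w$---to avoid losing the $\ell h$ gain from the $\Hr$-weight when pairing against angular test functions. Part (i) is comparatively routine once the explicit reflection formula is in hand.
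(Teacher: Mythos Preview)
Your approach coincides with the paper's: an explicit reflection formula for Part~(i) via the factorization $\ttsquare=2\lambda_K'(r_-)\,\p_\Ht(\p_\Ht-\p_\Hr)$ and Lemma~\ref{lem:1a}, and for Part~(ii) the energy inequality of Proposition~\ref{prop:nrj} applied to $w=v-\ttv$ with forcing $-(\square+m^2-\ttsquare)\ttv$, together with the observation that the second--order coefficients of $\square-\ttsquare$ vanish at $\Hr=0$.

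The one genuine gap is in Part~(ii): you correctly sense that the exponent $2/3$ does not fall out of a naive $L^2$ bound, but you never name the device. The paper's mechanism is the Sobolev embedding $L^{3/2}\hookrightarrow H^{-1/2}$ on the three--dimensional slice. Concretely, write $\square-\ttsquare+m^2=\sum \az_{ij}\p_i\p_j+r^{-2}\Delta_\Ss+P(D)$ with $\az_{ij}(0,\w)=0$ and $P(D)$ first order. On $\supp\ttv$ one has $\az_{ij}=O(\ell h)$ while $|\p^2\ttv|\lesssim h^{-2}$ on an $\Hr$--interval of length $O(\ell h)$; hence
\[
|\az_{ij}\p_i\p_j\ttv|_{L^{3/2}}\lesssim (\ell h)\cdot h^{-2}\cdot(\ell h)^{2/3}=\ell^{5/3}h^{-1/3},
\]
and similarly $|r^{-2}\Delta_\Ss\ttv|_{L^{3/2}}+|P(D)\ttv|_{L^{3/2}}=O(\ell^{2/3}h^{-1/3})$. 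Integrating over a time window of length $O(\ell h)$ yields $O(\ell^{8/3}h^{2/3})$. Your phrase ``fractional Sobolev duality exploiting the semiclassical structure'' gestures at this but is not a proof; if instead you pass through $L^2\hookrightarrow H^{-1/2}$ you only get $O(\ell^{5/2}h^{1/2})$ from the principal terms, which is weaker, and an $H^{1/2}$--$H^{-1/2}$ interpolation of the type you suggest does not directly apply to the source term.

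A minor remark on Part~(i): your closed form $\ttv=v_0(\Hr/h)-v_0(y/h+R/h)$ is correct in the region $\Ht+\Hr<\Ht_\BB$, but you should make precise the claim that in the window $\Ht_\BB-\Ht\in[c_\BB\ell h,c\ell h]$ the static piece $v_0(\Hr/h)$ is absorbed into the error. The paper handles this implicitly by writing the solution directly as $v_0\big((\Hr+\Ht-\Ht_1(\Hr+\Ht))/h\big)$ on that window and invoking an analog of Lemma~\ref{lem:1e}; your sketch would be cleaner with a support argument showing where each piece lives relative to $\Hr=\Hz(\Ht)$.
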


We start with a preliminary lemma:

\begin{lem}\label{lem:1a} For all $s \in [\Hz(0),\Ht_\BB]$, the equation $\Hz(\Ht_1)+\Ht_1 = s$ admits a unique solution $\Ht_1(s)$. The map $s \mapsto \Ht_1(y)$ is smooth and 
\begin{equation}\label{eq:3j}
\Ht_1(s) = \Ht_\BB + \dfrac{1}{1+\lambda_K'(r_-)\az_0}(s- \Ht_\BB) + O((s-\Ht_\BB)^2).
\end{equation}
\end{lem}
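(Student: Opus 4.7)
The plan is to apply the inverse function theorem to the map $\Phi:\Ht_1 \mapsto \Hz(\Ht_1) + \Ht_1$ defined on $[0,\Ht_\BB]$.

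First I would record the boundary values. The expansion $\Hz(\Ht) = \lambda_K'(r_-)\az_0(\Ht-\Ht_\BB) + O((\Ht-\Ht_\BB)^2)$ implies in particular that $\Hz(\Ht_\BB) = 0$; hence $\Phi(\Ht_\BB) = \Ht_\BB$ and $\Phi(0) = \Hz(0)$, so that $\Phi$ maps $[0,\Ht_\BB]$ into the closed interval with endpoints $\Hz(0)$ and $\Ht_\BB$.

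The key step is to establish that $\Phi'(\Ht_1) = 1 + \Hz'(\Ht_1)$ is strictly positive on the compact interval $[0,\Ht_\BB]$. At the endpoint this is the inequality $1 + \lambda_K'(r_-)\az_0 > 0$, and on the rest of the interval it is the bound $|\Hz'(\Ht_1)| < 1$. Both are a reformulation in the coordinate system $\HSS$ of the hypothesis that $\BB$ is timelike. An elementary computation, using the explicit form of $g$ in $\HSS$ together with $\BB = \{\Hr = \Hz(\Ht)\}$ to pull back the metric to the surface, shows that timelikeness of $\BB$ translates into $|\Hz'|<1$ in $\HSS$ (this is the analogue of the well-known bound $|z_*'|<1$ obtained in $\SSS_*$). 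In particular $\Phi'$ is bounded below by a positive constant on $[0,\Ht_\BB]$.

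Once $\Phi$ is known to be a smooth strictly increasing bijection from $[0,\Ht_\BB]$ onto $[\Hz(0),\Ht_\BB]$, the inverse function theorem provides a unique smooth inverse $\Ht_1(s) := \Phi^{-1}(s)$ on $[\Hz(0),\Ht_\BB]$, satisfying $\Ht_1'(s) = 1/\Phi'(\Ht_1(s))$. A first-order Taylor expansion at $s = \Ht_\BB$, combined with the identity $\Ht_1'(\Ht_\BB) = 1/\Phi'(\Ht_\BB) = (1+\lambda_K'(r_-)\az_0)^{-1}$, then produces the asymptotic \eqref{eq:3j}.

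The only nontrivial point is the positivity of $\Phi'$ on the full interval $[0,\Ht_\BB]$, which boils down to the geometric fact that the timelikeness hypothesis on $\BB$ reads $|\Hz'|<1$ in the coordinates $\HSS$; everything else is the standard inverse function theorem and a Taylor expansion.
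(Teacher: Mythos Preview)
Your proposal is correct and follows essentially the same route as the paper: both deduce existence and uniqueness of $\Ht_1(s)$ from the timelikeness of $\BB$, invoke the implicit/inverse function theorem for smoothness, and then differentiate $\Hz(\Ht_1)+\Ht_1=s$ at $\Ht_\BB$ to get $\Ht_1'(\Ht_\BB)=(1+\lambda_K'(r_-)\az_0)^{-1}$ and hence \eqref{eq:3j}. One small caveat: the light cone in $\HSS$ is not symmetric in $(\Ht,\Hr)$, so the timelikeness condition is not literally $|\Hz'|<1$ but rather $\Hz'>-1$ (combined with the spacelikeness of the $\Ht$-slices); since $\Hz$ is decreasing this is equivalent to your stated bound, so the argument goes through.
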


\begin{proof} Since $\BB$ is timelike the equation $\Hz(\Ht_1)+\Ht_1 = s$ admits a unique solution for each $s \in [\Hz(0),\Ht_\BB]$. By the implicit function theorem $s \mapsto \Ht_1(s)$ is smooth. We must have $\Ht_1(\Ht_\BB)=\Ht_\BB$ and $\Ht_1'(\Ht_\BB) (\Hz' (\Ht_\BB) +1) = 1$. Since $\Hz'(\Ht_\BB) = \az_0 \lambda_K'(r_-)$ we obtain
$\Ht_1'(\Ht_\BB)= (1+\lambda_K'(r_-)\az_0)^{-1}$. This shows \eqref{eq:3j}.\end{proof}

\begin{proof}[Proof of Proposition \ref{prop:1}] We start with $(i)$. The operator $\ttsquare$ has constant coefficients and consequently the solution $\ttv$ of \eqref{eq:r6t} can be derived in closed form. For times $\Ht \in \TTT = [\Ht_\BB-c\ell h, \Ht_\BB - c_\BB \ell h]$ it is given by
\begin{equation}\label{eq:hFb}
\ttv(\Ht,\Hr,\w) = v_0\left(\dfrac{\Hr + \Ht-\Ht_1(\Hr+\Ht)}{h}, \w\right).
\end{equation}
As $v_0$ satisfies \ref{eq:3h} we have $\supp(\ttv(\Ht_\BB)) \subset [0,\ell h] \times \Ss$. Therefore $\supp(\ttv(\Ht)) \subset [\Hz(\Ht),\ell h+O(\Ht)] \times \Ss$ and if $\Ht - \Ht_\BB = O(\ell h)$ then $\supp(\ttv(\Ht)) \subset [\Hz(\Ht),O(\ell h)] \times \Ss$. The identity \eqref{eq:3j} combined with \eqref{eq:hFb} yields
\begin{equation*}
\ttv(\Ht,\Hr,\w) = v_0\left( \dfrac{y+O(y^2)}{h},\w\right), \ \ \ \Ht \in \TTT
\end{equation*}
where $\beta_0 = (1+\lambda_K'(r_-)\az_0)^{-1} \lambda'_K(r_-)\az_0$ and $y = \beta_0(\Hr +\Ht-\Ht_\BB)$. Since $O(y^2)=O(\ell^2h^2)$ on $\supp(\ttv)$ we can apply an analog of Lemma \ref{lem:1e} and obtain
\begin{equation*}
\ttv(\Ht,\Hr,\w) = v_0\left( y/h,\w\right) + O_{H^{1/2}}(\ell^2 h^{1/2}), \ \ \ \Ht \in \TTT.
\end{equation*}
A similar calculation shows
\begin{equation*}
\p_\Ht \ttv(\Ht,\Hr,\w) = \dfrac{\beta_0}{h} (\p_x v_0)\left( y/h,\w\right) + O_{H^{-1/2}}(\ell^2 h^{1/2}), \ \ \  \Ht \in \TTT.
\end{equation*}

We now prove $(ii)$. Define $w = v-\ttv$. It satisfies $w|_\BB = 0$, $w(\Ht_\BB) = \p_\Ht w(\Ht_\BB) = 0$ and
\begin{equation*}
(\square+m^2) w = - (\square+m^2) \ttv = -(\square- \ttsquare+m^2)\ttv.
\end{equation*}
Proposition \ref{prop:nrj} for $w$ and $s=1/2$ gives
\begin{equation*}
|w(\Ht)|_{H^{1/2}} + |\p_t w(\Ht)|_{H^{-1/2}} \lesssim \int_{\Ht}^{\Ht_\BB}|(\square - \ttsquare+m^2)\ttv(\tau)|_{H^{-1/2}} d\tau.
\end{equation*}
Because of the embedding $L^{3/2} \hookrightarrow H^{-1/2}$ point $(ii)$ follows from an estimate on $|(\square+m^2 - \ttsquare)\ttv(\tau)|_{3/2}$ for $\Ht_\BB-\tau = O(\ell h)$. Write
\begin{equation*}
\square - \ttsquare + m^2 = \az_{\Ht \Ht} \p_\Ht^2 +\az_{\Ht \Hr} \p_\Ht\p_\Hr +\az_{\Hr\Hr} \p_\Hr^2 + r^{-2} \Delta_\Ss + P(D).
\end{equation*}
Here $\az_{ij}$ are smooth functions on $[0,\infty) \times \Ss$ vanishing at $\Hr = 0$ and $P(D)$ is a first order differential operator with smooth bounded coefficients. Recall that for $\Ht_\BB - \tau = O(\ell h)$ we have $\supp(\ttv) \subset [0,O(\ell h)] \times \Ss$. Thus $\az_{\Ht\Ht}(\Hr,\w) = O(\ell h)$ on $\supp(\ttv)$. It follows that
\begin{equation*}
|\az_{\Ht \Ht} \p_\Ht^2 \ttv(\tau)|_{3/2} = O(\ell h) |\p_\Ht^2 \ttv(\tau)|_{3/2}.
\end{equation*}
Given the explicit form of $\ttv$ while differentiating twice with respect to $\Ht$ a factor $h^{-2}$ appears. While integrating over $\Hr$ a factor $(\ell h)^{2/3}$ appears. Thus $|\p_\Ht^2 \ttv(\tau)|_{3/2} = O(\ell^{2/3} h^{-4/3})$ and
\begin{equation*}
|\az_{\Ht \Ht} \p_\Ht^2 \ttv(\tau)|_{3/2} = O(\ell h) O(\ell^{2/3} h^{-4/3}) = O(\ell^{5/3} h^{-1/3}).
\end{equation*}
By the same arguments we obtain a similar bound for $|\az_{ij} \p_i \p_j \ttv(\tau)|_{3/2}$, $i,j \in \{\Ht, \Hr\}$. In order to evaluate $|r^{-2} \Delta_\Ss \ttv(\tau)|_{3/2}$ we note that $r^{-2}$ is uniformly bounded and that differentiating with respect to $\w$ does not make us loose any power of $h$. Consequently $|r^{-2}\Delta_\Ss \ttv(\tau)|_{3/2} = (\ell h)^{2/3}$. The operator $P(D)$ is a differential operator of order $1$ and has bounded coefficients. When differentiating one time $\ttv$ a factor $h^{-1}$ appears; when taking the $L^{3/2}$-norm a factor $(\ell h)^{2/3}$ appears. Therefore $|P(D)\ttv(\tau)|_{3/2} \leq |D \ttv(\tau)|_{3/2} = O(\ell^{2/3} h^{-1/3})$. Grouping all these estimates together we obtain
\begin{equation*}
\left|(\square - \ttsquare + m^2) \ttv (\tau)\right|_{H^{-1/2}} = O(\ell^{5/3} h^{-1/3}), \ \ \  \tau -\Ht_\BB = O(\ell h).
\end{equation*}
Integrating over $\tau \in [\Ht,\Ht_\BB]$ yields
\begin{equation*}
\int_{\Ht}^{\Ht_\BB} |(\square - \ttsquare+m^2)\ttv(\tau)|_{H^{-1/2}} d\tau = O(\ell^{8/3} h^{2/3}).
\end{equation*}
This concludes the proof. \end{proof}

\subsection{Global study of the reflection.}\label{subsec:gbs} 
Here we construct an approximate solution of \eqref{eq:1d} for $v_0$ satisfying \ref{eq:3h}. Let $\tv$  be the solution of 
\begin{equation}\label{eq:trouloulou}
\system{ (\square+m^2) \tv = 0 \\ \tv(\Ht_\BB-c_\BB \ell h) = v_0\left( \beta_0\left( \dfrac{\Hr}{h} -c_\BB \ell\right), \w\right) \\ (\p_\Ht \tv)(\Ht_\BB - c_\BB \ell h) =\dfrac{\beta_0}{h}(\p_x v_0)\left(\beta_0\left( \dfrac{ \Hr}{h} -c_\BB \ell\right), \w\right) .}
\end{equation}
We first construct an approximate solution of \eqref{eq:trouloulou} of the form 
\begin{equation}\label{eq:approx}
\tv_\ap(\Ht,\Hr,\w) = \int_{\R \times \R} e^{\frac{i}{h}(\phi(\Ht,\Hr,p_\Hr) - \Hr' \xi_\Hr)} b(\Ht,\Hr,\xi_\Hr) v_0\left(\beta_0 \left( \dfrac{\Hr'}{h}-c_\BB \ell \right), \w\right) d\Hr' d\xi_\Hr
\end{equation}
by the WKB method -- see \cite{Zworski} for a comprehensive introduction. Because of \S \ref{sec:hg} the phase function $\phi$ is given by $\phi(\Ht, \Hr, \xi_\Hr) = (\Hr+\Ht-\Ht_\BB+c_\BB\ell h) \xi_\Hr$. The amplitude $b$ satisfies the transport equation
\begin{equation}\label{eq:lh5}\begin{gathered}
\system{\left[(\p_\xi p)(x,\phi'_x) \p_x + p_1(x,\phi'_x) \right]b =0 \\  b(\Ht_\BB-c_\BB \ell h,\Hr,p_\Hr) = 1,} \\
 p = - \Delta_r \left((\p_r \Hr) \xi_\Hr - F_K' \xi_\Ht \right)^2 + \dfrac{r^4\xi_\Ht^2}{\Delta_r }, \
p_1 = - (\p_r \Hr) \left( (\p_\Hr \Delta_r \p_r \Hr) \xi_\Hr - (\p_\Hr \Delta_r F_K') \xi_\Ht\right).\end{gathered}
\end{equation}
Equation \eqref{eq:lh5} reduces to
\begin{equation}
-\p_\Hr b + \p_\Ht b - \dfrac{\p_\Hr r}{r} b = 0
\end{equation}
thus $rb$ depends only on $\Ht+\Hr$. Using $b(\Ht_\BB-c_\BB \ell h) = 1$ the function $b$ is explicitly given by
\begin{equation*}
b(\Ht,\Hr) = \dfrac{\az(\Hr+\Ht-\Ht_\BB+c_\BB \ell h)}{\az(\Hr)}, \ \ \ \az(\Hr) = r.
\end{equation*}
The Fourier inversion formula applied to \eqref{eq:approx} yields
\begin{equation*}
\tv_\ap(\Ht,\Hr,\w) = \dfrac{\az(\Hr+\Ht-\Ht_\BB+c_\BB \ell h)}{r} v_0\left(y/h, \w\right)
\end{equation*}
where we recall that $y=\beta_0(\Hr+\Ht-\Ht_\BB)$. When $(y/h,\w)$ belongs to $\supp(v_0)$ we must have $y = O(\ell h)$ and thus $\az(\Hr+\Ht-\Ht_\BB-c_\BB \ell h) = \az(0) + O(\ell h) = r_-+O(\ell h)$. We define
\begin{equation*}
v_\ap(\Ht,\Hr,\w) = \dfrac{r_-}{r} v_0\left(y/h, \w\right).
\end{equation*}

\begin{proposition}\label{prop:jG4} If $v_0$ satisfies \ref{eq:3h} and $v$ solves \eqref{eq:1d} then uniformly in $\ell \geq 1, h \leq 1$
\begin{equation}\label{eq:jf2}
|(v-v_\ap)(0)|_{H^{1/2}} + |\p_t(v-v_\ap)(0)|_{H^{-1/2}} = O(\ell^{8/3} h^{2/3}).
\end{equation}
\end{proposition}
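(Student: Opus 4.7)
The strategy is to decompose the error $v - v_\ap$ at $\Ht = 0$ by interpolating through two intermediate objects defined at the matching time $\Ht_1 := \Ht_\BB - c_\BB \ell h$: the toy-model solution $\ttv$ of \eqref{eq:r6t}, which approximates $v$ through the reflection, and the free WKB template $\tv$ of \eqref{eq:trouloulou}, whose Cauchy data at $\Ht_1$ match the dominant part of $v$ and whose leading WKB approximation is $\tv_\ap$. I would split
\begin{equation*}
v - v_\ap = (v - \tv) + (\tv - \tv_\ap) + (\tv_\ap - v_\ap)
\end{equation*}
and control each piece at $\Ht = 0$ in $H^{1/2} \oplus H^{-1/2}$.

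To handle $(v - \tv)(0)$, I first pass through $\ttv$ at $\Ht_1$. A direct check shows that at $\Ht = \Ht_1$ the expression $v_0(y/h,\w)$ featured in Proposition \ref{prop:1}(i) coincides with the Cauchy datum of $\tv$ given in \eqref{eq:trouloulou}, and similarly for the time derivative; hence Proposition \ref{prop:1}(i)--(ii) combine to give $|(v - \tv)(\Ht_1)|_{H^{1/2}} + |\p_\Ht(v - \tv)(\Ht_1)|_{H^{-1/2}} = O(\ell^{8/3} h^{2/3})$. A finite-propagation-speed argument then shows that the supports of both $v(\Ht)$ and $\tv(\Ht)$ stay strictly to the right of $\BB$ throughout $[0, \Ht_1]$: the left edge of $\supp \tv(\Ht)$ lies near the null characteristic $\Hr + \Ht = \Ht_\BB$ moving at unit speed, while $\BB$ is timelike with speed at most $c_\BB = |\Hz'|_\infty < 1$. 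Thus $v - \tv$ satisfies the homogeneous Klein--Gordon equation with zero trace on $\BB$ on this strip, and Proposition \ref{prop:nrj} propagates the above estimate down to $\Ht = 0$ with only a multiplicative constant.

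For $(\tv - \tv_\ap)(0)$, the eikonal and transport equations \eqref{eq:lh5} cancel the $h^{-2}$ and $h^{-1}$ contributions of $(\square + m^2)\tv_\ap$, leaving a remainder that is pointwise bounded and supported in an $\Hr$-slab of Lebesgue measure $O(\ell h)$. Hence $|(\square + m^2)\tv_\ap(\tau)|_{L^{3/2}} = O((\ell h)^{2/3})$; combined with the embedding $L^{3/2} \hookrightarrow H^{-1/2}$, integrating over $\tau \in [0,\Ht_1]$ and invoking Proposition \ref{prop:nrj} applied to $\tv - \tv_\ap$ (which by Fourier inversion in \eqref{eq:approx} has zero Cauchy data at $\Ht_1$ and vanishes on $\BB$ in the strip) yields $|(\tv - \tv_\ap)(0)|_{H^{1/2}} + |\p_\Ht(\tv - \tv_\ap)(0)|_{H^{-1/2}} = O(\ell^{2/3} h^{2/3})$.

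Finally, a direct computation gives
\begin{equation*}
\tv_\ap - v_\ap = \frac{r(\Hr + \Ht - \Ht_\BB + c_\BB \ell h) - r_-}{r(\Hr)} \, v_0\!\left(\beta_0 \frac{\Hr + \Ht - \Ht_\BB}{h}, \w\right),
\end{equation*}
whose prefactor is $O(\ell h)$ on the support of $v_0(\cdot/h,\w)$; a rescaling argument bounds this difference and its time derivative by $O(\ell h)$ in the required Sobolev norms. Summing the three contributions produces the claimed leading bound $O(\ell^{8/3} h^{2/3})$, proving \eqref{eq:jf2}. The main technical obstacle is verifying that the supports of $\tv$ and $\tv_\ap$ stay uniformly separated from $\BB$ throughout $[0,\Ht_1]$ so that Proposition \ref{prop:nrj} applies; this is the finite-propagation-speed argument grounded in the timelike character of $\BB$ expressed by $|\Hz'|_\infty < 1$.
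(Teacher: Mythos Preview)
Your three–piece decomposition is very close in spirit to the paper's, but the insertion of the exact free solution $\tv$ of \eqref{eq:trouloulou} creates a real gap. To apply Proposition~\ref{prop:nrj} to $v-\tv$ (and to $\tv-\tv_\ap$) on $[0,\Ht_1]$ you need $\tv|_\BB=0$, and your justification for this is incorrect: you assert that ``the left edge of $\supp\tv(\Ht)$ lies near the null characteristic $\Hr+\Ht=\Ht_\BB$,'' but finite propagation speed for the free Klein--Gordon equation makes the support expand along \emph{both} null families. Going backward from $\Ht_1$, the left edge of $\supp\tv$ follows the \emph{incoming} null direction (not $\Hr+\Ht=$ const), and since $\BB$ is moving to the right ($\Hz'<0$) these can meet. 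Nothing in your data at $\Ht_1$ forces $\tv$ to be exactly outgoing; the choice of Cauchy data only makes it outgoing at leading WKB order, so the claimed support separation is unsubstantiated.

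The paper avoids this difficulty by skipping $\tv$ altogether and comparing $v$ directly with the \emph{explicit} parametrix $\tv_\ap$. Because $\tv_\ap$ is given by a closed formula supported exactly on the outgoing characteristic strip $\{\Hr+\Ht-\Ht_\BB \text{ in the support of } v_0(\cdot/h)\}$, the boundary condition $\tv_\ap|_\BB=0$ follows immediately from the fact that $\BB$ is timelike while these level sets are lightlike; no finite-propagation argument for an unknown function is needed. The rest of your argument --- the matching at $\Ht_1$ via Proposition~\ref{prop:1}, the $L^{3/2}\hookrightarrow H^{-1/2}$ control of $(\square+m^2)\tv_\ap$, and the direct estimate of $\tv_\ap-v_\ap$ --- is correct and coincides with the paper's. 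The fix is simply to collapse your first two pieces into one and run Proposition~\ref{prop:nrj} on $v-\tv_\ap$ directly.
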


\begin{proof} We start by proving that
\begin{equation}\label{eq:jf3}
|(v-\tv_\ap)(\Ht)|_{H^{1/2}} + |\p_t(v-\tv_\ap)(\Ht)|_{H^{-1/2}} = O(\ell^{8/3} h^{2/3}).
\end{equation}
This holds at time $\Ht = \Ht_\BB - c_\BB \ell h$ by Proposition \ref{prop:1}. According to Proposition \ref{prop:nrj} \eqref{eq:jf3} will follow from the estimates
\begin{equation}\label{eq:3x}
\system{  (\square+m^2) \tv_\ap = O_{H^{-1/2}}((\ell h)^{2/3}), \ \ \ \ \tv_\ap|_\BB = 0, \\
\tv_\ap(\Ht_\BB-c_\BB \ell h) = v(\Ht_\BB-c_\BB \ell h) + O_{H^{1/2}}(\ell^{8/3} h^{2/3}). }
\end{equation}
By construction of $\tv_\ap$ we know that $\square \tv_\ap$ takes the form
\begin{equation*}
\square \tv_\ap = f\left(r,\Hr+\Ht-\Ht_\BB+c_\BB \ell h, y/h,\w\right)
\end{equation*}
where the RHS vanishes when $y/h \notin [0,\ell]$. In particular no negative power of $h$ appears while applying the operator $\square$ to $v_\ap$ and
\begin{equation*}
|\square \tv_\ap|_{H^{-1/2}} \leq C |\square \tv_\ap|_{3/2} \leq C (\ell h)^{2/3}.
\end{equation*}
This proves the first estimate of \eqref{eq:3x}. The boundary condition $\tv_\ap|_\BB = 0$ follows from of the fact that $\BB$ is a timelike surface and that the level sets of $\Ht+\Hr$ are lightlike. The conditions at $\Ht = \Ht_\BB - c_\BB \ell h$ directly follow from point $(ii)$ of Proposition \ref{prop:1}. This yields \eqref{eq:3x} and thus \eqref{eq:jf3}. 

We now prove that 
\begin{equation}\label{eq:3z}
|\tv_\ap(\Ht)- v_\ap(\Ht)|_{H^{1/2}} + |\p_\Ht(\tv_\ap - v_\ap)(\Ht)|_{H^{-1/2}} =O((\ell h)^{1/2}).
\end{equation}
Since $\az(0) = r_-$ there exists a smooth function $\zeta$ such that $\az(\Hr+\Ht-\Ht_\BB-c_\BB \ell h) = (y-c_\BB \ell h) \zeta(y-c_\BB \ell h)$. Thus when $(y/h,\w) \in \supp(v_0)$ we have $\az(\Hr+\Ht-\Ht_\BB-c_\BB \ell h) = O(\ell h)$. This implies
\begin{equation*}
|v_\ap - \tv_\ap|_2 \leq C\ell h \left(\int_{\R \times \Ss} v_0(y/h,\w)^2 dyd\w\right)^{1/2} = O((\ell h)^{3/2}).
\end{equation*}
Similarly $\az'(\Hr+\Ht-\Ht_\BB-c_\BB \ell h) = \zeta(y-c_\BB \ell h) + (y-c_\BB \ell h) \zeta'(y-c_\BB \ell h)$. This is uniformly bounded for $(y/h,\w) \in \supp(v_0)$. The product rule for derivatives imply
\begin{equation*}
|\p_r(v_\ap - \tv_\ap)|_2 = O((\ell h)^{1/2}).
\end{equation*}
The angular derivatives can be estimated the same way as $|v_\ap - \tv_\ap|_2$. Interpolating between these two bounds yields $|v_\ap - \tv_\ap|_{H^{1/2}} = O(\ell h)$. We know estimate $|\p_\Ht(\tv_\ap - v_\ap)(\Ht)|_{H^{-1/2}}$. It suffices to control $|\p_\Ht(\tv_\ap - v_\ap)(\Ht)|_{3/2}$. The formula for $\az'$ above and the product rule for derivatives imply
\begin{equation*}
|\p_\Ht(\tv_\ap - v_\ap)(\Ht)|_{3/2} = O((\ell h)^{2/3}).
\end{equation*}
This proves \eqref{eq:3z} which together with \eqref{eq:jf3} implies \eqref{eq:jf2}.\end{proof}

\subsection{Proof of Theorem \ref{thm:haw}.}\label{sec:Haw} This section is devoted to the proof of Theorem \ref{thm:haw}.It consists in two main steps. In \S \ref{sub:3}-\ref{sub:4} we use an auxiliary hypersurface $\Sigma$ to connect the results of \S \ref{sec:freeprop} with the one of this section. In \S \ref{sub:5} we use the decay of scattering fields to generalize Proposition \ref{prop:jG4} in the limit $\ell \rightarrow \infty$ (in a sense depending on $h$). We work in $\SSS_*$.

\subsubsection{Preliminaries}\label{sub:3} We first recall a few facts. By standard theorems of Leray, if $\Sigma_1, \Sigma_2$ are two smooth spacelike hypersurfaces there is a well defined operator $U(\Sigma_2,\Sigma_1)$ (resp. $U_\BB(\Sigma_2,\Sigma_1)$) 
\begin{equation*}
\begin{matrix}
 C^\infty_0(\Sigma_1, \R^2) & \rightarrow & C_0^\infty(\Sigma_2,\R^2) \\
  (f_0,f_1) & \mapsto & (f|_{\Sigma_2}, \p_tf|_{\Sigma_2})
\end{matrix}
\end{equation*}
where $f$ is the unique solution of
\begin{equation*}
\system{(\square +m^2) f = 0 \\ f|_{\Sigma_1} = f_0, \ \p_tf|_{\Sigma_1} = f_1} \ \ 
\left( \text{resp. } \system{(\square +m^2) f = 0, \ \ \ f|_\BB = 0,  \\ f|_{\Sigma_1} = f_0, \ \p_tf|_{\Sigma_1} = f_1}\right).
\end{equation*}
For two points $p,q$ in $\MM$ we define the causal distance between $p$ and $q$ as
\begin{equation*}
d(p,q) = \inf\{ L(\gamma), \gamma \text{ timelike geodesic connecting } p  \text{ and } q \},
\end{equation*}
where $L(\gamma)$ is the length of the geodesic. If $\Sigma_1$ and $\Sigma_2$ are two spacelike surfaces we define $d(\Sigma_1,\Sigma_2) = \sup d(p,q)$ where the infimum is realized over all points $p \in \Sigma_1, q \in \Sigma_2$. If $d(\Sigma_1,\Sigma_2) < \infty$, the operators $U(\Sigma_2,\Sigma_1), U_\BB(\Sigma_2,\Sigma_1)$ are continuous from $C^\infty(\Sigma_1)$ to $C^\infty(\Sigma_2)$. In what follows we denote simply by $t_0$ (resp. $\Ht_0$) the Cauchy surface $\{t=t_0\}$ (resp. $\{\Ht=\Ht_0\}$). Fix $u_0, u_1 \in C^\infty_0(\R \times \Ss)$. Theorem \ref{thm:asymp} is a result on the limit of $U_\BB(0,T) (u_0,u_1)$ as $T \rightarrow \infty$. 

We now define an auxiliary surface $\Sigma$ that connects the results of \S \ref{sec:freeprop} with Proposition \ref{prop:jG4}. Let $\tK$ be a compact set containing $\supp(u_0,u_1)$ and $[z_*(0),1]$. Let $K$ be the image of $\tK$ under the map $x \mapsto r$. Following \S \ref{subsec:coo} we define a coordinate $\Ht$ satisfying $t=\Ht$ on $K$. The star crosses the black hole horizon at a time $\Ht_\BB > 0$. Finally fix $A \in \R$ with 
\begin{equation}\label{eq:hg3}
J^-(\{\Ht=\Ht_\BB, x \leq A \}) \cap \{ t= 0 \} \subset \{ x \leq 1 \}.
\end{equation}

There exists $\Sigma \subset \R \times \R \times \Ss$ a hypersurface such that:
\begin{enumerate}
\item[$(i)$] $\Sigma$ is smooth and spacelike;
\item[$(ii)$] For $x \leq A$, $\Sigma$ is contained in $\Ht=\Ht_\BB$;
\item[$(iii)$] There exists $B>0$ such that for $x \geq A+B$, $\Sigma$ is contained in $\{t=0\}$.
\end{enumerate}
$\Sigma$ is constructed as follows: for $x \leq A$ it is given by $\Ht = \Ht_\BB$. For $x \geq A$ it is continued smoothly by the equation $t=t(x)$. Here the function $x \mapsto t(x)$ is smooth nonincreasing and satisfies $0 \geq t'(x) > -1$ when $t(x) \neq 0$, and vanishes for $x \geq A+B$ for some positive constant $B$. 

Let $T \gg 1$ and $h = e^{-\kappa_-T}$. For $\ell$ with $\kappa_-^{-1} \ln(\ell) - T\leq A$ let $\chi_\I, \chi_\II, \chi_\III \in C_0^\infty(\R)$ such that $|\chi'_{\I,\II,\III}| \leq 1$ and 
\begin{equation*}
\begin{gathered}
\chi_\I + \chi_{\II} + \chi_{\III} = 1, \ \supp(\chi_\III) \subset [A+B,\infty),\\
\supp(\chi_{\II}) \subset[ \kappa_-^{-1} \ln(\ell) - T-1, A+B+1], \ \ \supp(\chi_\I) (-\infty,\kappa_-^{-1} \ln(\ell) - T].
\end{gathered}
\end{equation*}
Note that the condition $x \geq \kappa_-^{-1} \ln(\ell) - T$ implies $\Hr\geq c_0 \ell h$ for some $c_0 > 0$. We can thus take $\chi_\I$ of the form
\begin{equation}\label{eq:4g}
\chi_\I(x) = \chi\left(\dfrac{\Hr}{\ell h}\right)
\end{equation}
where $\Hr = \Hr(x)$ is the radial coordinate defined in \S \ref{sec:hg} and $\chi(x)$ is equal to $1$ for $|x| \leq c_0/2$ and vanishes for $x \geq c_0$. Below we will chose $\ell$ depending on $h$ in a suitable way. We draw on Figure \ref{fig:sigma} the hypersurface $\Sigma$ as well as the support of the three functions $\chi_{\I,\II,\III}$.

\begin{center}
\begin{figure}
\begin{tikzpicture}
\draw[thick,->] (-8,0) -- (8,0) node[anchor=north east] {$x$}; 
\draw[thick,->] (-2,-2)  -- (-2,7) node[anchor=north east] {$t$};
\node[brown] at (-6.2,6.3) {$x = \kappa_-^{-1} \ln(\ell) - T$};
\node[blue] at (-4.7,5.3) {$x=A$};
\node[brown] at (3,6.3) {$x=A+B$};
\draw[brown,dashed,thick] (3,-2) -- (3,6);
\draw[brown,dashed,thick] (-6.2,-2) -- (-6.2,6);
\draw[blue,dashed,thick] (-4.7,-2) -- (-4.7,5);
\draw[thick,brown,->] (-8,-1.5) -- (-6.2,-1.5);
\draw[thick,brown,->] (8,-1.5) -- (3,-1.5);
\draw[thick,brown,->] (0,-1) -- (-7.2,-1);
\draw[thick,brown,->] (0,-1) -- (4,-1);
\node[brown] at (6,-2) {$\supp(\chi_{\III})$};
\node[brown] at (0,-1.5) {$\supp(\chi_{\II})$};
\node[brown] at (-7,-2) {$\supp(\chi_{\I})$};
\draw[thick] (-8,6) .. controls (-4.5,2.4) and (-4.2, 1.5) .. (-4,0);
\node at (-6.5,4) {$\BB$};
\draw[blue,thick] (-8,6.2) .. controls (-5,2.8) and (-4.6, 2.35) .. (-4.2,2.4);
\draw[red] (8,6.2) .. controls (4.6,2.8) and (4.2, 2.35) .. (2,2.4);
\node[red] at (6,5) {$\Ht=\Ht_\BB$};
\draw[blue,thick] (-2,2.4) -- (-4.2,2.4);
\draw[red] (-2,2.4) -- (2,2.4);
\draw[blue,thick] (-2,2.4) .. controls (-1.5,2.4) and (-.5,1.8) .. (0,1.2);
\draw[blue,thick] (2,0) .. controls (1.5,0) and (0.5,0.6) .. (0,1.2);
\draw[blue,thick] (2,0) -- (7,0);
\node[blue,thick] at (-3,2.8) {$\Sigma$};
\end{tikzpicture}
\caption{The Cauchy surface $\Sigma$ in $\SSS_* $.}\label{fig:sigma}
\end{figure}
\end{center}

Since $J^+(\Sigma) \cap \BB = \emptyset$ $\Ht=t$ on $\supp(u_0,u_1)$ we can write
\begin{equation*}
U_\BB(0,T) (u_0,u_1) = U_\BB(0,\Sigma) U_\BB(\Sigma,T)(u_0,u_1) = U_\BB(0,\Sigma) U(\Sigma,\HT)(u_0,u_1).
\end{equation*}
Here $\HT$ denotes the Cauchy surface $\Ht = T$. Consequently $U_\BB(0,T) (u_0,u_1)$ splits into three terms:
\begin{equation*}\begin{gathered}
U_\BB(0,T) (u_0,u_1) =  U_\BB(0,\Sigma) \chi_\I U(\Sigma,\HT)(u_0,u_1) \\
  + U_\BB(0,\Sigma) \chi_{\II} U(\Sigma,\HT) (u_0,u_1) 
 + U_\BB(0,\Sigma) \chi_{\III} U(\Sigma,\HT)(u_0,u_1).
 \end{gathered}
\end{equation*}
Below we study separately each of the terms in the RHS.

\subsubsection{Study of the third term.}
Start with $U_\BB(0,\Sigma) \chi_{\III} U(\Sigma,\HT)(u_0,u_1)$. On $\supp(\chi_{\III})$, $\Sigma$ is contained in $\{ t=0 \}$. Thus $U_\BB(0,\Sigma)\chi_{\III} = \chi_{\III}$ and $\chi_{\III} U(\Sigma,T) = \chi_{\III} U(0,T)$. It follows that
\begin{equation*}
U_\BB(0,\Sigma) \chi_{\III} U(\Sigma,T)(u_0,u_1) = \chi_{\III} U(0,\HT) (u_0,u_1).
\end{equation*} 
No boundary is involved in this first term. The results of \S \ref{subsec:1a} apply with remainder term small in the space $C^\infty$ and there is no complication due to the change of time-foliation. Proposition \ref{prop:scatt2} implies
\begin{equation}\label{eq:3m}\begin{gathered}
\left(\chi_{\III} U(0,\HT) (u_0,u_1)\right)(x,\w) = \chi_{\III}(x) u^*_+(x-T,\w) + O_{C^\infty}(e^{-cT}) \\ 
 = u^*_+(x-T,\w) + O_{C^\infty}(e^{-cT}).
 \end{gathered}
\end{equation}

\subsubsection{Study of the second term.}\label{sub:4}
Here we estimate the term $U_\BB(0,\Sigma) \chi_{\II} U(\Sigma,\HT)(u_0,u_1)$. On $\supp(\chi_\II) \cap \Sigma$ we have $\Ht \leq \Ht_\BB$ and $r_+-\delta_0 \geq r, \ r-r_- \gtrsim \ell h$ for some $\delta_0 > 0$. Since $h = e^{-\kappa_- T}$ we have along $\Sigma$
\begin{equation*}
-T+\Ht +2 F_K'(r) \leq - T - \kappa_-^{-1}\ln |r-r_-| + C \leq -\kappa_-^{-1} \ln(\ell) + C.
\end{equation*}
Proposition \ref{prop:res2} implies that $\chi_{\II} U(\Sigma,\HT)(u_0,u_1) =  O(\ell^{-c_0})$. Along $\Sigma \cap \supp(\chi_\II)$ is at finite distance from $\{t=0\}$; consequently the operator $U_\BB(0,\Sigma)\chi_{\II}$ is continuous on $C^\infty$. It yields
\begin{equation}\label{eq:3n}
U_\BB(0,\Sigma) \chi_{\II} U_\BB(\Sigma,T)(u_0,u_1) =  O_{C^\infty}(\ell^{-c_0}).
\end{equation}

\subsubsection{Study of the first term.}\label{sub:5}
Here we study the term $U_\BB(0,\Sigma) \chi_\I U(\Sigma,\HT)(u_0,u_1)$. Note that $\Sigma \cap \supp(\chi_\I) \subset \{ \Ht = \Ht_\BB \}$. This together with \eqref{eq:hg3} implies
\begin{equation*}
J^-(\supp(\chi_\I) \cap \Sigma) \cap \{ t= 0 \} \cap \{ x \geq z_*(0) \} \subset \{ 0 \}  \times [z_*(0),1] \times \Ss \subset \{\Ht=0\}.
\end{equation*}
This yields
\begin{equation*}
U_\BB(0,\Sigma) \chi_\I U(\Sigma,\HT)(u_0,u_1) = U_\BB(\hat{0},\Ht_\BB) \chi_\I U(\Ht_\BB,\HT)(u_0,u_1).
\end{equation*}

Lemma \ref{lem:lalala} and Lemma \ref{lem:gcS} imply that 
\begin{equation*}
\left(\chi_\I U(\Ht_\BB,\HT)(u_0,u_1)\right)(x,\w) = \left(\chi_I(x) u_-\left( \kappa_-^{-1}\ln\left( \dfrac{\Hr}{\lambda_K'(r_-)h} \right),\w \right), 0\right) + \chi_\I O_{H^{1/2} \oplus H^{-1/2}}(e^{-cT}).
\end{equation*}
By Proposition \ref{prop:nrj} the operator $U_\BB(\hat{0},\Ht_\BB)$ is continuous from $H^{1/2} \oplus H^{-1/2}$ to itself. Thus
\begin{equation*}\begin{gathered}
U_\BB(\hat{0},\Ht_\BB) \chi_\I U(\Ht_\BB,\HT)(u_0,u_1) = \\ U_\BB(\hat{0},\Ht_\BB) \left(\chi_I(x) u_-\left( \kappa_-^{-1}\ln\left( \dfrac{\Hr}{\lambda_K'(r_-)h} \right),\w \right), 0\right) + O_{H^{1/2} \oplus H^{-1/2}}(e^{-cT}).
\end{gathered}
\end{equation*} 
Because of \eqref{eq:4g} the function
\begin{equation*}
(x,\w) \mapsto \chi_\I(x) u_-\left( \kappa_-^{-1}\ln\left( \dfrac{x}{\lambda_K'(r_-)} \right),\w \right)
\end{equation*}
satisfies the hypothesis \ref{eq:3h}. The support of $U_\BB(\hat{0},\Ht_\BB) \chi_\I U(\Ht_\BB,\HT)(u_0,u_1)$ is contained in $[z_*(0),1] \times \Ss$ thus $F_K(r) = 0$ on this set. Since $\Hr = x+F_K(r)-\Ht_\BB$ this implies $\Hr = x -\Ht_\BB$ on $[z_*(0),1] \times \Ss$. Proposition \ref{prop:jG4} yields
\begin{equation*}\begin{gathered}
\left(U_\BB(\hat{0},\Ht_\BB) \chi_\I U(\Ht_\BB,\HT)(u_0,u_1)\right)(x,\w) = \\
\left(v_\ap\left(\dfrac{\beta_0 x}{h},\w\right), \ \p_x \left(v_\ap\left(\dfrac{\beta_0 x}{h},\w\right)\right)\right) + O_{H^{1/2} \oplus H^{-1/2}} (\ell^{8/3} h^{2/3}) + O_{H^{1/2} \oplus H^{-1/2}}(h^c), \\
v_\ap(\beta_0 x/h,\w) = \chi\left(\dfrac{x}{h\ell}\right) u_-\left( \kappa_-^{-1}\ln\left( \dfrac{\beta_0 x}{h}\right), \w\right).
\end{gathered}
\end{equation*}
Rigorously speaking here the Sobolev norms are measured with respect to $\HHS$ but as $\p_x \Hr$ is bounded from above and below on $[z_*(0),1]$ we can also measure them with respect to $\SSS_*$. We now remove the term $\chi(x/(h\ell))$ in the expression of $v_\ap$. The function $(x,\w) \mapsto u_-(\kappa_-^{-1} \ln(\beta_0 x),\w)$ belongs to the symbol class $S^{-\delta}$ for some $\delta > 0$.  Thus Lemma \ref{lem:1g} in the appendix implies
\begin{equation*}
 \chi_0(x)\rho\left(\dfrac{x}{h \ell}\right) u_-\left( \kappa_-^{-1}\ln\left( \dfrac{\beta_0 x}{h}\right), \w\right) = \chi_0(x) u_-\left( \kappa_-^{-1}\ln\left( \dfrac{\beta_0x}{h}\right), \w\right) + O_{H^{1/2}}(h^{\delta/2} + \ell^{-\delta/2})
\end{equation*} 
where $\rho=1-\chi$ and $\chi_0$ is supported on a small neighborhood of $[z_*(0),1]$ and equal to $1$ on $[z_*(0),1]$. Since $u_- \in X_\scatt$ it vanishes for small values of $x$, leading to
\begin{equation*}
u_-\left( \kappa_-^{-1}\ln\left( \dfrac{\beta_0x}{h}\right), \w\right) = 0, \ x \geq \dfrac{1}{2}, \ h \text{ small enough.}
\end{equation*}
It follows that on $\Sigma_0 = [z_*(0),\infty) \times \Ss$,
\begin{equation*}
 \chi_0(x)\rho\left(\dfrac{x}{h \ell}\right) u_-\left( \kappa_-^{-1}\ln\left( \dfrac{\beta_0 x}{h}\right), \w\right) =  u_-\left( \kappa_-^{-1}\ln\left( \dfrac{\beta_0x}{h}\right), \w\right) + O_{H^{1/2}}(h^{\delta/2} + \ell^{-\delta/2}).
\end{equation*}
A similar identity holds for the time derivative -- we skip the details:
\begin{equation*}
\chi_0(x)\rho\left(\dfrac{x}{h \ell}\right) \p_x \left(u_-\left( \kappa_-^{-1}\ln\left( \dfrac{\beta_0 x}{h}\right), \w\right)\right) =  \p_x \left(u_-\left( \kappa_-^{-1}\ln\left( \dfrac{\beta_0x}{h}\right), \w\right)\right) + O_{H^{-1/2}}(h^{\delta/2} + \ell^{-\delta/2}).
\end{equation*}
Since the results of Proposition \ref{prop:jG4} are uniform in $\ell$ and $h$ we can now chose $\ell$ depending on $h$. Take $\ell = h^{-\eta}$ where $\eta$ is a small positive exponent so that $\ell^{8/3} h^{2/3} = \ell^{-\delta/2}$. It yields
\begin{equation}\label{eq:4d}\begin{gathered}
\left(U_\BB(\hat{0},\Ht_\BB) \chi_\I U(\Ht_\BB,\HT)(u_0,u_1)\right)(x,\w) = \\
\left(u_-\left( \kappa_-^{-1}\ln\left( \dfrac{\beta_0 x}{h}\right), \w\right), \dfrac{1}{\kappa_- x} (\p_xu_-)\left( \kappa_-^{-1}\ln\left( \dfrac{\beta_0 x}{h}\right), \w\right) \right) + O_{H^{1/2} \oplus H^{-1/2}}(h^c).\end{gathered}
\end{equation}
Switching $u_-$ to the scattering field $u^*_-$ constructed in Proposition \ref{prop:scatt2} according to $u^*_-(x,\w) = u_-(x-\lambda_K(r_-),\w)$ simply changes to the value of $\beta_0 < 0$ to some $\gamma_0 < 0$. With $\ell = h^{-\eta}$ \eqref{eq:3n} becomes 
\begin{equation*}
U_\BB(0,\Sigma) \chi_{\II} U_\BB(\Sigma,T)(u_0,u_1) =  O_{C^\infty}(h^c).
\end{equation*}
This estimate together with \eqref{eq:3m} and \eqref{eq:4d} completes the proof of Theorem \ref{thm:asymp}.

\subsection{Comments.}
Theorem \ref{thm:asymp} is one of the main novelties of this work. The proof relies exclusively on geometric arguments: conformal scattering for times $\Ht \in [\Ht_\BB,T]$, construction of a radial coordinate following the reflection of the singularity for $\Ht \in [0,\Ht_\BB]$, and WKB approximation. One can try to apply these arguments on non-spherical spacetimes but in such cases the solution $\Hr$ of \eqref{eq:h4e} might not be global. For small perturbation of SdS one should still be able to prove that $\Hr$ is globally defined.

In \cite{Bachelot1} the author works exclusively in $\SSS_*$. In this system of coordinate the reflection of the KG field takes place in the time-zone $t \in [0,T/2]$. Using $\HHS$ we needed to apply a WKB parametrix only on a bounded time-zone while in Bachelot's case a similar analysis must be realized on a time-zone of length $T \sim -\ln(h)$ (where $h^{-1}=e^{\kappa_-T}$ is the order of the frequency). This scale of time is known as the Erhenfest time. Local WKB paramtrices do not give good results in Bachelot's situation and even global theorems like \cite[Theorem $11.12$]{Zworski} cannot be applied.

\section{Proof of Theorem \ref{thm:haw}}\label{sec:5}
In this section we prove Theorem \ref{thm:haw}.

\subsection{Preliminaries}\label{sub:9u} In the coordinate system $\SSS_*$ the Klein-Gordon operator $\square + m^2$ is given by
\begin{equation}\label{eq:defH}\begin{gathered}
\square + m^2 = \dfrac{1}{\Delta_r} \dd{^2}{t^2} - \dfrac{1}{r^2 \Delta_r} \dd{}{x} r^2 \dd{}{x} + \dfrac{\Delta_{\Ss}}{r^2}+m^2 = \Delta_r^{-1}(\p_t^2 + r^{-1} \Ha r), \\
\Ha = D_x^2 + \dfrac{\Delta_r \Delta_{\Ss}}{r^2}+V, \ \ \ V=  \dfrac{(\p_x^2 r)}{r} + m^2 \Delta_r.\end{gathered}
\end{equation}
For $t \in [0,\infty]$ we introduce the spaces
\begin{equation*}\begin{gathered}
\HH_t^0  = L^2([z_*(t),\infty) \times \Ss, r^2 dx d\w), \\
\HH_t^2  = \{ u \in \HH_t^0, \ \Ha u \in \HH_t^0, \ u(z_*(t),\w) = 0 \},\end{gathered}
\end{equation*}
where by convention $z_*(\infty) = -\infty$ and there is no boundary condition in the definition of $\HH_\infty^2$. The operator $\Ha_t$ formally equal to $\Ha$ but with domain $\HH_t^2$ is selfadjoint on $\HH_t^0$ with spectrum in $[0,\infty)$. For $s \in \R$ we define $\HH^{2s}_t$ the domain of the operator $\Ha_t^s$. We provide this space with the norm $|\cdot|_{\HH_t^{2s}} = |\Ha^s_t \cdot|_{\HH^0_t}$. For technical reasons we concentrate on the case $V > 0$, where $V$ is the potential given in \eqref{eq:defH}. The following lemma shows that this is realized as long as $\Lambda$ is small enough.

\begin{lem}\label{lem:1b} Fix $m, M > 0$. There exists $\Lambda_0, C$ such that for all $0 < \Lambda \leq \Lambda_0$, for all $r \in (r_-,r_+)$, $V \geq C \Delta_r$.
\end{lem}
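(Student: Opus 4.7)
The plan is to prove the lemma by directly computing the ratio $V/\Delta_r$ and showing that it is bounded below by a positive constant when $\Lambda$ is small. The key observation is that the $\Lambda$-dependent correction in $V/\Delta_r$ is negligible compared to the mass term $m^2$.

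First I would unwind the definition of $V$. Since $x'(r)=r^2/\Delta_r$, one has $\partial_x r = \Delta_r/r^2$, and therefore
\begin{equation*}
\partial_x^2 r \;=\; \partial_x\!\left(\dfrac{\Delta_r}{r^2}\right) \;=\; \dfrac{\Delta_r}{r^2}\,\p_r\!\left(\dfrac{\Delta_r}{r^2}\right).
\end{equation*}
Because $\Delta_r/r^2 = 1 - \Lambda r^2/3 - 2M/r$, a direct computation gives $\p_r(\Delta_r/r^2) = -2\Lambda r/3 + 2M/r^2$, whence
\begin{equation*}
\dfrac{\p_x^2 r}{r} \;=\; \Delta_r\left(\dfrac{2M}{r^5} - \dfrac{2\Lambda}{3 r^2}\right).
\end{equation*}
Combining with $V = (\p_x^2 r)/r + m^2 \Delta_r$ yields the explicit factorization
\begin{equation*}
\dfrac{V}{\Delta_r} \;=\; \dfrac{2M}{r^5} \;-\; \dfrac{2\Lambda}{3 r^2} \;+\; m^2, \qquad r \in (r_-,r_+).
\end{equation*}

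Next I would control the sign of the right-hand side. The first term $2M/r^5$ is positive, so it may be discarded when searching for a lower bound. For the negative term, I would use that $r \geq r_-$ on $(r_-,r_+)$ and that $r_-$ remains bounded away from zero uniformly in $\Lambda \in (0, \Lambda_1]$ for some fixed small $\Lambda_1$; indeed $r_-$ is the smallest positive root of $\Delta_r$, and as $\Lambda \to 0^+$ one has $r_-(\Lambda) \to 2M$ continuously (this follows from viewing $r_-(\Lambda)$ as a root of $r - \Lambda r^3/3 - 2M$ which perturbs the Schwarzschild root $r=2M$). Hence there is $c_0 > 0$ depending only on $M$ such that $r_-(\Lambda) \geq c_0$ for all $\Lambda \leq \Lambda_1$, and consequently
\begin{equation*}
\dfrac{2\Lambda}{3 r^2} \;\leq\; \dfrac{2\Lambda}{3 c_0^2}, \qquad r \in (r_-, r_+).
\end{equation*}

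Choosing $\Lambda_0 \leq \Lambda_1$ small enough so that $2\Lambda_0/(3c_0^2) \leq m^2/2$, we obtain for every $\Lambda \in (0,\Lambda_0]$ and every $r \in (r_-,r_+)$
\begin{equation*}
\dfrac{V}{\Delta_r} \;\geq\; m^2 - \dfrac{2\Lambda}{3r^2} \;\geq\; \dfrac{m^2}{2},
\end{equation*}
which proves the lemma with $C = m^2/2$. There is essentially no genuine obstacle here: the only subtle point is justifying that $r_-$ stays uniformly bounded below as $\Lambda \to 0^+$, which is immediate from the continuity of the roots of $\Delta_r$ in $\Lambda$ and the fact that the limiting (Schwarzschild) root equals $2M > 0$.
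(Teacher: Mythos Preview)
Your proof is correct and follows essentially the same approach as the paper: both compute $V/\Delta_r = 2M/r^5 - 2\Lambda/(3r^2) + m^2$, drop the positive term $2M/r^5$, bound the negative term using $r \geq r_-$, and invoke $r_-(\Lambda)\to 2M$ as $\Lambda\to 0$ to conclude $V/\Delta_r \geq m^2/2$ for $\Lambda$ small. The only cosmetic difference is that the paper cites Hurwitz's theorem for the convergence of the root $r_-$, whereas you appeal directly to continuity of the roots in $\Lambda$.
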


\begin{proof} Note that 
\begin{equation*}
V = \Delta_r \left( \dfrac{1}{r^3} \p_r \dfrac{\Delta_r}{r^2} +m^2 \right) = \Delta_r \left( - \dfrac{2\Lambda}{3r^2} +\dfrac{2M}{r^5} +m^2 \right)\geq \Delta_r \left(- \dfrac{2\Lambda}{3r_-^2} +m^2\right).
\end{equation*} 
See $r_-$ -- the smallest positive root of $\Delta_r$ -- as a function of $\Lambda$. As $\Lambda \rightarrow 0$ $\Delta_r/r$ converges uniformly to $r - 2M$. By Hurwitz's theorem, two of the roots of $\Delta_r/r$ escape any compact subset of $\R$ and the last one converges to $2M$. We also know that $\Delta_r/r$ has one negative root and two positive roots $r_- < r_+$. Since $2M > 0$ the negative root of $\Delta_r/r$ must diverge to $-\infty$ and the largest positive root must diverge to $+\infty$. By elimination $r_- \rightarrow 2M$. Therefore 
\begin{equation*}
V \geq \Delta_r \left( - \dfrac{2\Lambda}{12M^2} + m^2 +o(\Lambda)\right) \geq \dfrac{1}{2} m^2 \Delta_r
\end{equation*}
for $\Lambda$ small enough. \end{proof}

In the rest of the paper we work for $\Lambda$ in the range $(0,\Lambda_0]$. Hence $V \geq C e^{-\kappa_+x}$ for every $x \geq z_*(0)$. To simplify notation we write $\HH = \HH_0^0$, $\lr{\cdot, \cdot} = \lr{\cdot,\cdot}_{\HH_0^0}$ and $|\cdot| = |\cdot|_{\HH_0^0}$. Let $\beta>0$ and $\varphi, \psi$ be given by
\begin{equation}\label{eq:2r}
\varphi(z) = z^{1/2} \coth(\beta z^{1/2}), \ \ \ \psi(z) = z^{-1/2} \coth(\beta z^{1/2}).
\end{equation}
The function $\varphi$ (resp. $\psi$) is holomorphic (resp. meromorphic with a simple pole at $0$) in a neighborhood of $[0,\infty)$. To give a mathematical formulation of the Hawking radiation we need to study the operators $\varphi(\Ha_0)$ and $\psi(\Ha_0)$ defined by the spectral theorem -- see \S \ref{sub:func}. Below we show that the domains of $\varphi(\Ha_0), \psi(\Ha_0)$ contain the space 
\begin{equation*}
X_0 = \{u \in C^\infty_0(\Sigma_0), \ \ u(z_*(0),\w) = 0\}.
\end{equation*}
We will use the Lowner-Heinz inequality which we recall here. If $(B,D(B))$ is a selfadjoint operator and $(A,D(A))$ is a symmetric positive operator with $D(B) \subset D(A)$. If $A \leq B$ on $D(B)$ then for all $0 \leq p \leq 1$, $A^p \leq B^p$, and for all $-1 \leq p \leq 0$, $B^p \leq A^p$.

\begin{lem}\label{lem:1f} Let $s \in [1/4,1]$ and $F$ be a continuous function. Assume that for every $z \in [0,\infty)$ we have $F(z) \leq c \lr{z}^s$. Then the domain of the selfadjoint operator $F(\Ha_0)$ defined by the spectral contains the set $X_0$ and for every $v \in X_0$,
\begin{equation*}
|F(\Ha_0)v| \leq C |v|_{H^{2s}(\Sigma_0)}.
\end{equation*}
\end{lem}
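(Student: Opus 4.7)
The plan is to reduce the inequality to a Löwner--Heinz comparison between $\Ha_0$ and a reference second-order elliptic operator whose fractional powers generate the $H^{2s}$ norm. By the spectral theorem and the pointwise bound $F(z)^{2} \le c^{2}(1+z^{2})^{s}$,
\begin{equation*}
 |F(\Ha_0)v|^{2} \;\le\; c^{2}\,\bigl\langle (1+\Ha_0^{2})^{s}\,v,\,v\bigr\rangle
\end{equation*}
for every $v$ in the form domain of $(1+\Ha_0^{2})^{s}$. Since any $v \in X_{0}$ is smooth, compactly supported, and satisfies the Dirichlet condition $v(z_{*}(0),\w)=0$, we have $v \in \HH_{0}^{2}=D(\Ha_{0})$; combined with $s\le 1$ this already yields the domain inclusion $X_{0}\subset D(F(\Ha_{0}))$.

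Next, introduce a reference positive self-adjoint operator $B_{0}$ on $\HH$ obtained as the Friedrichs extension of $1-\p_{x}^{2}-\Delta_{\Ss}$ initially defined on $X_{0}$. Using that $r^{2}$ is bounded above and below on $[z_{*}(0),\infty)\times\Ss$ (since $r\in(r_{-},r_{+}]$ is a bounded set), the fractional Bessel potentials give the equivalence
\begin{equation*}
 |B_{0}^{s} v|_{\HH} \;\simeq\; |v|_{H^{2s}(\Sigma_{0})},\qquad s\in[0,1].
\end{equation*}

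The main technical step is to establish the quadratic form comparison
\begin{equation*}
 1+\Ha_0^{2} \;\le\; C\,B_{0}^{2}.
\end{equation*}
Since the coefficients $\Delta_{r}/r^{2}$ and $V$ of $\Ha_{0}$ are bounded on $\Sigma_{0}$, $\Ha_{0}$ is a second-order differential operator with bounded coefficients, so $|\Ha_{0}v|_{\HH}\le C\,|v|_{H^{2}(\Sigma_{0})}\le C\,|B_{0}v|_{\HH}$ for every $v\in X_{0}$; together with $|v|_{\HH}\le |B_{0}v|_{\HH}$ this gives the desired form inequality on $X_{0}$, and density of $X_{0}$ in the form domain of $B_{0}^{2}$ promotes it to an operator inequality between the positive self-adjoint operators $1+\Ha_{0}^{2}$ and $CB_{0}^{2}$. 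Applying the Löwner--Heinz inequality with exponent $s\in[1/4,1]\subset[0,1]$ then yields
\begin{equation*}
 (1+\Ha_0^{2})^{s} \;\le\; C^{s}\,B_{0}^{2s}.
\end{equation*}

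Combining these bounds,
\begin{equation*}
 |F(\Ha_0)v|^{2} \;\le\; c^{2}\,\bigl\langle (1+\Ha_0^{2})^{s}v,v\bigr\rangle \;\le\; c^{2}C^{s}\,|B_{0}^{s}v|_{\HH}^{2} \;\lesssim\; |v|_{H^{2s}(\Sigma_{0})}^{2},
\end{equation*}
which is the claimed estimate. The main obstacle is the Löwner--Heinz step: it requires careful upgrading of the form bound on the core $X_{0}$ to an operator inequality between the \emph{unbounded} positive self-adjoint operators $1+\Ha_{0}^{2}$ and $CB_{0}^{2}$, and the restriction $s\le 1$ appearing in the hypothesis is precisely what makes the monotonicity of the fractional power $t\mapsto t^{s}$ available.
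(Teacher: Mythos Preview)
Your argument is correct and follows the same overall strategy as the paper: bound $F(\Ha_0)$ by a fractional power of $\Ha_0$, then dominate $\Ha_0$ by a reference elliptic operator via the L\"owner--Heinz inequality, and finally identify the fractional powers of the reference operator with the $H^{2s}$ norm. The paper's implementation differs only in packaging: instead of squaring and comparing $1+\Ha_0^2 \le CB_0^2$ with the uniformly elliptic $B_0 = 1 + D_{x,0}^2 + \Delta_\Ss$, the paper works at first order with the weighted comparison operator $L_0 = D_{x,0}^2 + \lr{x}^{-2}\Delta_\Ss + 1$ and the pointwise inequality $\Ha_0 \le L_0$. Your non-degenerate $B_0$ makes the identification $\DD(B_0^{s}) \simeq H^{2s}$ (with boundary condition) more standard, at the cost of passing through squares; the paper's choice gives a one-step comparison $\Ha_0 \le L_0$ but requires that the weighted $L_0$ still has fractional domains equal to the usual Sobolev spaces. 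One small point: in the paper's convention $\Delta_\Ss$ is already nonnegative, so your $B_0$ should read $1 - \p_x^2 + \Delta_\Ss$ (or $1 + D_x^2 + \Delta_\Ss$) rather than $1 - \p_x^2 - \Delta_\Ss$.
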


\begin{proof} Since $F(z) \leq c \lr{z}^s$, $\DD(\Ha_0^s) \subset \DD(F(\Ha_0))$. In addition the operator $L_0 = D_{x,0}^2 + \lr{x}^{-2} \Delta_\Ss +1$ is selfadjoint on $\HH$ with domain 
\begin{equation*}
\DD(L_0) = \{ v \in H^2([z_*(0),\infty) \times \Ss), v(z_*(0),\w) = 0\} \subset \HH_0^2 = \DD(\Ha_0).
\end{equation*}
Moreover on $\DD(L_0)$ we have $\Ha_0 \leq L_0$. We now apply the Lowner-Heinz theorem. The operator $L_0^s$ has the domain
\begin{equation*}
\DD(L_0^s) = \{ v \in H^{2s}(\Sigma_0), \ v(z_*(0),\w) = 0\}. 
\end{equation*}
Consequently $X_0 \subset \DD(L_0^s) \subset \DD(F(\Ha_0))$ and for every $v \in X_0$,
\begin{equation*}
|F(\Ha_0) v| \lesssim |L_0 v| \lesssim |v|_{H^{2s}}.
\end{equation*}
This ends the proof of the lemma.\end{proof}

Applying this result to $F = \varphi$ and $s=1$ shows that the domain of $\varphi(\Ha_0)$ contains the space $X_0$. Now comes the more difficult task of proving that the domain of $\psi(\Ha_0)$ contains $X_0$. The function $\psi$ has a pole at $0$. Since
\begin{equation*}
\coth(x) = \sum_{k = -\infty}^\infty \dfrac{x}{x^2+k^2 \pi^2},
\end{equation*}
we have
\begin{equation}\label{eq:4h}
\psi(z) = \dfrac{1}{\beta z} + \phi(z), \ \ \ \phi(z) = \dfrac{1}{\beta}\sum_{k \in \Vv \setminus \{0\} } \dfrac{1}{z + k^2}
\end{equation}
where $\Vv = \pi \beta^{-1} \cdot \Z$. The series
\begin{equation*}
\phi(\Ha_0) = \beta^{-1} \sum_{k \in \Vv \setminus \{0\}} ( \Ha_0 + k^2)^{-1}
\end{equation*}
converges for the topology of bounded operators on $\HH$ as
\begin{equation*}
\left| (\Ha_0+k^2)^{-1} \right|_{\BB(\HH)} \leq \dfrac{1}{d(\sigma(\Ha_0), -k^2)} \lesssim k^{-2}.
\end{equation*}
The operator $\phi(\Ha_0)$ is thus a bounded operator. In order to prove that $X_0$ is contained in $\DD(\psi(\Ha_0))$ it is sufficient to show that $X_0$ is contained in $\DD(\Ha_0^{-1})$. Let $D_{x,0}^2$ be the unbounded operator formally equal to $D_x^2$ but with domain
\begin{equation*}
\DD(D_{x,0}^2) = \{u \in \HH, D_x^2u \in \HH, \ u(z_*(0),\w)=0 \}.
\end{equation*}
It is a selfadjoint operator on $\HH$. In the following lemma we give some estimates on the resolvent of this operator.

\begin{lem}\label{lem:1c} Let $\az > 0$. The operator
\begin{equation}\label{eq:1u}
e^{-\az x} (D_{x,0}^2-\lambda^2)^{-1} e^{-\az x} : \HH \rightarrow \HH
\end{equation}
well defined for $\Ime \lambda > 0$, continues to a holomorphic family of operators for $\Ime \lambda > -\az/2$. Moreover for every $v \in X_0$,
\begin{equation}\label{eq:1v}
\lr{D_{x,0}^{-2} v,v} \leq C \int_\Ss \left|\int_{z_*(0)}^\infty \lr{x} |v(x,\w)| dx \right|^2 d\w.
\end{equation}
\end{lem}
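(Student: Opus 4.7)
The plan is to compute explicitly the resolvent kernel of $D_{x,0}^2$ via the method of images on the half-line $[z_*(0),\infty)$, and then to use the zero-energy Green's function to bound the quadratic form. Since $D_{x,0}^2$ commutes with rotations, it suffices to work mode-by-mode in $\w$. For $\Ime\lambda > 0$ the kernel is
\begin{equation*}
G_\lambda(x,y) = \frac{i}{2\lambda}\bigl(e^{i\lambda|x-y|} - e^{i\lambda(x+y-2z_*(0))}\bigr),
\end{equation*}
and a Taylor expansion at $\lambda=0$ shows that the apparent $1/\lambda$ singularity is removable, with $G_0(x,y) = \min(x-z_*(0),\, y-z_*(0))$. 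The weight $r^2$ appearing in the measure defining $\HH$ is bounded above and below on $[z_*(0),\infty)$ and will be absorbed into constants throughout.

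For the first claim, I would exploit the elementary inequality $|x-y|\leq (x-z_*(0)) + (y-z_*(0))$ valid for $x,y\geq z_*(0)$. For $\Ime\lambda > -\az/2$, this gives $|e^{i\lambda|x-y|}|\leq e^{(\az/2)(x+y-2z_*(0))}$, and the same bound holds directly for the reflection term. Multiplying by $e^{-\az x}e^{-\az y}$ produces a kernel pointwise dominated by $C|\lambda|^{-1} e^{-\eta(x+y)}$ for some $\eta>0$, which is Hilbert--Schmidt and hence bounded on $\HH$. Combined with the removability of the singularity at $\lambda=0$ and the explicit holomorphic dependence of the kernel on $\lambda$, this yields a holomorphic family of bounded operators on $\{\Ime\lambda > -\az/2\}$.

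For the quadratic form estimate, the main task is to identify $\lr{D_{x,0}^{-2}v,v}$ with the integral against $G_0$. Because $\lambda = 0$ lies at the bottom of the continuous spectrum of $D_{x,0}^2$, this requires justification: I would use the Fourier sine transform in $x$, which diagonalizes the Dirichlet Laplacian, together with the fact that for $v\in X_0$ (smooth, compactly supported, and vanishing at $x=z_*(0)$) the transform $\tilde v(k,\w)$ is $O(k)$ as $k\to 0$, making $\int_0^\infty k^{-2}|\tilde v(k,\w)|^2 dk$ finite. Parseval's identity then yields
\begin{equation*}
\lr{D_{x,0}^{-2}v,v}\leq C \int_\Ss \iint_{[z_*(0),\infty)^2} G_0(x,y)\,|v(x,\w)||v(y,\w)|\, dx\, dy\, d\w.
\end{equation*}
The elementary bound $G_0(x,y)\leq \sqrt{(x-z_*(0))(y-z_*(0))}\leq C\lr{x}^{1/2}\lr{y}^{1/2}\leq C\lr{x}\lr{y}$, together with factorization of the resulting double integral, then gives
\begin{equation*}
\lr{D_{x,0}^{-2}v,v}\leq C\int_\Ss \Bigl(\int_{z_*(0)}^\infty \lr{x}|v(x,\w)|\, dx\Bigr)^2 d\w,
\end{equation*}
as claimed.

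The main subtlety is the spectral-edge identification in the second part: the resolvent $(D_{x,0}^2-\lambda^2)^{-1}$ does not extend boundedly to $\lambda=0$, but on $v\in X_0$ the boundary condition $v(z_*(0),\w)=0$ ensures enough vanishing of the sine transform at $k=0$ to recover the kernel representation. A clean alternative would be to send $\lambda\to 0^+$ in the explicit kernel, with the estimates from the first part providing the required dominated convergence.
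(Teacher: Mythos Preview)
Your proposal is correct and follows essentially the same approach as the paper: compute the explicit Dirichlet resolvent kernel $K_\lambda(x,y) = \tfrac{i}{2\lambda}\bigl(e^{i\lambda|x-y|} - e^{i\lambda|x+y-2z_*(0)|}\bigr)$, observe that it extends holomorphically across $\lambda=0$ with $K_0(x,y)=\min(x,y)-z_*(0)$, and then bound the weighted kernel (the paper invokes Schur's lemma where you use a Hilbert--Schmidt estimate) and the quadratic form via $\min(x,y)-z_*(0)\lesssim \lr{x}\lr{y}$. Your additional care about the spectral-edge identification through the sine transform is a nice clarification the paper leaves implicit, but otherwise the arguments coincide.
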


\begin{proof} For $\Ime \lambda >0$ the kernel of the one-dimensional operator $R_0(\lambda) = D_{x,0}^2 - \lambda^2$ is the holomorphic function of $\lambda$ given by 
\begin{equation}\label{eq:kerne}
K_\lambda(x,y) = i\dfrac{e^{i\lambda|x-y|} - e^{i\lambda|x+y-2z_*(0)|}}{2\lambda}
\end{equation}
-- see \cite{Simon}. It extends to an entire function on $\C$; in particular
\begin{equation*}
\lim_{\lambda \rightarrow 0} K_\lambda(x,y) = \min(x,y) - z_*(0).
\end{equation*}
Equation \eqref{eq:1u} follows then from Schur's lemma. For \eqref{eq:1v} we note that
\begin{equation*}
\begin{gathered}
\lr{D_{x,0}^{-2} v, v} = \int_{\Sigma_0} \int_{z_*(0)}^\infty \left( \min(x,y) - z_*(0) \right) v(y,\w) v(x,\w) dy dx d\w \\
         \lesssim \int_\Ss \int_{z_*(0)}^\infty \int_{z_*(0)}^\infty \lr{x} \lr{y} |v(x,\w)| |v(y,\w)| dy dx d\w 
        \lesssim \int_\Ss \left| \int_{z_*(0)}^\infty \lr{x} |v(x,\w)| dx \right|^2 d\w.
        \end{gathered}
\end{equation*}
This ends the proof. \end{proof}

We now give a sense to $\Ha_0^{-1}$ on $X_0$. By the same analysis as in the proof of \cite[Proposition 2.1]{SaZw}, the operator
\begin{equation*}
 (\Ha_0-\lambda^2)^{-1} : \HH \rightarrow \HH
\end{equation*}
initially defined for $\Ime \lambda > 0$, admits a meromorphic continuation to $\C$ as an operator $\HH_\comp \rightarrow \HH_\loc$. Its poles are called resonances.

\begin{lem} The operator $\Ha_0$ has no resonance at $0$ thus $\Ha_0^{-1}$ is well defined on $X_0$. In addition,
\begin{enumerate}
\item[$(i)$] For every $R$ there exists $C_R$ such that for all $v \in X_0$ supported in $[z_*(0),R] \times \Ss$,
\begin{equation}\label{eq:1l}
\lr{\Ha_0^{-1}v,v} \leq C_R |v|_{H^{-1}(\Sigma_0)}^2.
\end{equation}
\item[$(ii)$] There exists $C$ such that for every $v \in X_0$,
\begin{equation}\label{eq:1m}
\lr{\Ha_0^{-1}v,v} \leq C \int_\Ss \left| \int_{z_*(0)}^\infty \lr{x} |v(x,\w)| dx \right|^2 d\w.
\end{equation}
\end{enumerate}
\end{lem}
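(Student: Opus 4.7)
The plan is to prove the three claims in sequence: first the absence of a zero resonance, then \eqref{eq:1l} via elliptic regularity of the resolvent at $\lambda=0$, and finally \eqref{eq:1m} via a L\"owner--Heinz comparison with $D_{x,0}^{-2}$.

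\textbf{Absence of a zero resonance.} A zero resonance would be a nontrivial smooth $u$ with $\Ha_0 u=0$, $u(z_*(0),\w)=0$, satisfying the outgoing condition as $x\to+\infty$. Near $r_+$ one has $r-r_+=O(e^{-2\kappa_+x})$, so both $V$ and the angular coefficient $\Delta_r/r^2$ decay exponentially, and standard ODE asymptotics show that any outgoing zero-energy solution satisfies $u(x,\w)=c(\w)+O(e^{-\eta x})$ for some $\eta>0$. Pairing $\Ha_0 u=0$ against $\bar u r^2$, integrating over $[z_*(0),R]\times\Ss$, and letting $R\to+\infty$ (exponential decay kills the boundary term at $+\infty$, the Dirichlet condition does so at $z_*(0)$) yields
\[
0=\lr{D_xu,D_xu}+\lr{\Delta_r\Delta_\Ss u/r^2,\,u}+\lr{Vu,u}.
\]
All three terms are nonnegative, and by Lemma~\ref{lem:1b} we have $V>0$ strictly on $(r_-,r_+)$, forcing $u\equiv0$.

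\textbf{Sense of $\Ha_0^{-1}$ and \eqref{eq:1l}.} The meromorphic continuation $(\Ha_0-\lambda^2)^{-1}\colon\HH_\comp\to\HH_\loc$ (constructed as in \cite{SaZw}) is now holomorphic at $\lambda=0$, so we may define $\Ha_0^{-1}v:=\lim_{\lambda\to0}(\Ha_0-\lambda^2)^{-1}v$ on $X_0$. For $v\in X_0$ with support in $[z_*(0),R]\times\Ss$, ellipticity of $\Ha_0$ combined with continuity of this limit gives $\Ha_0^{-1}v\in H^1_\loc$ with $|\Ha_0^{-1}v|_{H^1(\{x\le R+1\})}\le C_R|v|_{H^{-1}}$. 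Since $v$ is supported where the weight $r^2$ is uniformly bounded, $|\lr{\Ha_0^{-1}v,v}|$ is controlled by the $H^1$-$H^{-1}$ duality pairing, yielding \eqref{eq:1l}.

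\textbf{Bound \eqref{eq:1m}.} Writing $\Ha_0=D_{x,0}^2+\Delta_r\Delta_\Ss/r^2+V$ and using nonnegativity of $\Delta_\Ss$ and of $V$ gives the quadratic-form inequality $\Ha_0\ge D_{x,0}^2$ on $X_0$. Applying L\"owner--Heinz at exponent $-1$ to the regularized pair $(\Ha_0+\epsilon,\,D_{x,0}^2+\epsilon)$ and passing to $\epsilon\to0^+$ on $X_0$ (both inverses being defined by Step 2 and Lemma~\ref{lem:1c}) yields $\lr{\Ha_0^{-1}v,v}\le\lr{D_{x,0}^{-2}v,v}$, and \eqref{eq:1v} then gives \eqref{eq:1m}.

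The main obstacle is the first step: justifying the vanishing of the boundary term at $+\infty$ in the integration-by-parts requires a careful asymptotic analysis of zero-energy outgoing solutions. The cleanest route is to decompose along spherical harmonics, reducing each mode to a one-dimensional Schr\"odinger equation with an exponentially decaying short-range potential, where Levinson-type ODE asymptotics ensure that $u\to c(\w)$ with integrable $\p_x u$ at $+\infty$.
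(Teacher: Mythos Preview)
Your argument for part $(ii)$ is exactly the paper's: L\"owner--Heinz gives $\lr{(\Ha_0+\epsilon)^{-1}v,v}\le\lr{(D_{x,0}^2+\epsilon)^{-1}v,v}$, and letting $\epsilon\to0^+$ together with Lemma~\ref{lem:1c} finishes it.

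For the absence of a zero resonance you take a genuinely different route. The paper separates into spherical harmonics, reduces to the one-dimensional operators $\Ha_{0,\ell}=D_{x,0}^2+V+\ell(\ell+1)\Delta_r/r^2$, and rules out a zero resonance for each by showing that the Fredholm determinant $d(\lambda)=\det(\Id+V^{1/2}R_0(\lambda)V^{1/2})$ satisfies $d(0)\ge1$ via Simon's expansion (positivity of the coefficients $d_n(0)$ comes from $V>0$). It then lifts back to $\Ha_0$ by projecting the hypothetical finite-rank residue onto harmonics. Your integration-by-parts/positivity argument is more direct and equally valid: once you know the outgoing zero-energy solution satisfies $u\to c(\w)$ with exponentially decaying $\p_x u$ (which is indeed standard ODE asymptotics for an exponentially short-range potential, mode by mode), the boundary term at $+\infty$ vanishes and strict positivity of $V$ from Lemma~\ref{lem:1b} forces $u\equiv0$. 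Both arguments ultimately rest on $V>0$; yours avoids the determinant machinery at the cost of the ODE asymptotics you flag at the end.

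Part $(i)$ is where your sketch has a genuine gap. You assert that ``ellipticity of $\Ha_0$ combined with continuity of this limit'' gives $|\Ha_0^{-1}v|_{H^1(\{x\le R+1\})}\le C_R|v|_{H^{-1}}$. But the meromorphic continuation is constructed as a map $\HH_\comp\to\HH_\loc$, i.e.\ with $L^2$ source data; elliptic regularity then upgrades the target to $H^2_\loc$, giving $|\Ha_0^{-1}v|_{H^1_\loc}\lesssim|v|_{L^2}$, not $|v|_{H^{-1}}$. To get the $H^{-1}\to H^1$ mapping you would need an additional duality step (use the symmetry $\lr{R(0)v,w}=\lr{v,R(0)w}$ to first get $R(0):H^{-1}_\comp\to L^2_\loc$, then apply elliptic regularity once more). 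This is fixable but is precisely the nontrivial content you have suppressed. The paper avoids this entirely by a second operator comparison: from $V\ge C\Delta_r$ (Lemma~\ref{lem:1b}) one has $\Ha_0\ge c\,\Delta_r(\Delta_\Ss+1)$, hence by L\"owner--Heinz $\lr{(\Ha_0+\epsilon)^{-1}v,v}\le C_R\lr{(\Delta_\Ss+c)^{-1}v,v}$ for compactly supported $v$ (where $\Delta_r^{-1}\le C_R$). Combining this with the bound against $(D_{x,0}^2+\epsilon)^{-1}$ and the elementary inequality $\min((\xi^2+\epsilon)^{-1},(\ell(\ell+1)+c)^{-1})\le 2(\xi^2+\ell(\ell+1)+c+\epsilon)^{-1}$ for the two commuting operators $D_{x,0}^2$ and $\Delta_\Ss$ yields $\lr{(\Ha_0+\epsilon)^{-1}v,v}\le C_R|v|_{H^{-1}}^2$ directly, with no resolvent mapping properties beyond the spectral calculus.
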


\begin{proof} We start by proving that $\Ha_0$ has no resonance at $0$. Let $\Ha_{0,\ell}$ be the selfadjoint operator 
\begin{equation*}
\Ha_{0,\ell} = D_{x,0}^2 + V + \dfrac{\Delta_r}{r^2}\ell(\ell+1)
\end{equation*}
on $L^2([z_*(0),\infty),dx)$ with domain $H^2_0([z_*(0),\infty))$. This is a $1$D Schr\"odinger operator with a positive potential. The family  of operators $V^{1/2}(\Ha_{0,\ell}-\lambda^2)^{-1}V^{1/2}$ initially defined for $\Ime \lambda > 0$ extends to a meromorphic family on $\C$, see \cite{Simon}. The poles of this meromorphic continuation are exactly the zero of the Fredholm determinant
\begin{equation*}
d(\lambda) = \det(\Id + V^{1/2} R_0(\lambda) V^{1/2}).
\end{equation*}
By \cite[Theorem 2]{Simon} this determinant admits  an explicit expansion of the form
\begin{equation}
d(\lambda) = 1 + \sum_{n=1}^\infty d_n(\lambda).
\end{equation}
Each of the coefficients satisfy $d_n(0) \geq 0$. In particular $d(0) \geq 1$ and $\Ha_{0,\ell}$ has no resonance at $0$. Assume now that $\Ha_0$ has a resonance at $0$ and call $m$ its multiplicity. Then there exists a family of operators $\lambda \mapsto A(\lambda)$ that is holomorphic near $0$ and and $\Pi$ an operator of finite rank such that near $0$,
\begin{equation*}
\lambda^{m-1}(\Ha_0-\lambda^2)^{-1} = \dfrac{\Pi}{\lambda} + A(\lambda).
\end{equation*} 
Define $\pi_\ell$ the orthogonal projector on the eigenspace $\ker(\Delta_\Ss-\ell(\ell+1))$. If $\Ime\lambda \gg 1$ and $\ell \neq \ell'$ then
\begin{equation*}
\pi_\ell (\Ha_0-\lambda^2)^{-1} \pi_{\ell'} = 0,
\end{equation*}
which implies by meromorphic continuation that $\pi_{\ell} \Pi \pi_{\ell'} = 0$. If $v \in \HH_\comp$ and $v_\ell$ is the function given by $v_\ell = \pi_\ell v$ then
\begin{equation*}
 \pi_{\ell} (\Ha_0-\lambda^2)^{-1} \pi_{\ell} v =  (\Ha_{0,\ell}-\lambda^2)^{-1}v_{\ell},  \ \ \Ime \lambda \gg 1.
\end{equation*}
By meromorphic continuation this yields
\begin{equation*}
\left(\pi_{\ell} \dfrac{\Pi}{\lambda}\pi_{\ell}  + \pi_{\ell} A(\lambda) \pi_{\ell}\right) v =  \lambda^{m-1}  (\Ha_{0,\ell}-\lambda^2)^{-1} v_{\ell},  \ \ \lambda \text{ near } 0.
\end{equation*}
As $0$ is not a resonance of $\Ha_{0,\ell}$ both sides must be holomorphic and thus $\pi_{\ell} \Pi \pi_{\ell} = 0$. Thus $\Pi=0$ and $\Ha_0$ has no resonance at $\lambda=0$.

Now we prove  $(i)$. Let $v \in X_0$ with support in $[z_*(0),R] \times \Ss$. Write $v = \sum_{\ell} v_\ell$ where $\pi_\ell v=v_\ell$. By the Lowner-Heinz inequality we have $(\Ha_{0,\ell}+\epsilon)^{-1} \leq (\Delta_r(\Delta_\Ss+c)+\epsilon)^{-1} \leq \Delta_r^{-1}(\Delta_\Ss+c)^{-1}$ for $c$ small enough and $\epsilon > 0$. The term $\Delta_r^{-1}$ is uniformly bounded for $x \in [z_*(0),R]$ by a constant $C_R$. Thus
\begin{equation*}\begin{gathered}
\lr{(\Ha_0+\epsilon)^{-1} v, v}   = \sum_{\ell=0}^\infty \lr{(\Ha_{0,\ell}+\epsilon)^{-1} v_\ell, v_\ell}   \leq \sum_{\ell=0}^\infty \lr{(\Delta_\Ss+c)^{-1} v_\ell, \Delta_r^{-1} v_\ell} \\
 \leq C_R \sum_{\ell=0}^\infty \dfrac{|v_\ell|^2}{\ell(\ell+1)+c}  = C_R \lr{(\Delta_\Ss+c)^{-1}v,v}.\end{gathered}
\end{equation*}
In addition by the Lowner-Heinz inequality, $\lr{(\Ha_0+\epsilon)^{-1} v, v} \leq \lr{(D_{x,0}^2+\epsilon)^{-1}v,v}$. The operators $\Delta_\Ss$ and $D_{x,0}^2$ commute. The inequality
\begin{equation*}
\min(\dfrac{1}{\xi^2+\epsilon}, \dfrac{1}{\ell(\ell+1)+c}) = \dfrac{1}{\max(\xi^2+\epsilon, \ell(\ell+1)+c)} \leq \dfrac{2}{\xi^2 + \ell(\ell+1)+c+\epsilon}
\end{equation*}
implies then
\begin{equation*}\begin{gathered}
\lr{(\Ha_0+\epsilon)^{-1} v, v} \leq C_R \min\left( \lr{(\Delta_\Ss+c)^{-1}v,v}, \lr{(D_{x,0}^2+\epsilon)^{-1}v,v}\right)  \\
    \leq 2C_R \lr{(D_{x,0}^2 + \Delta_\Ss+c+\epsilon)^{-1}v,v}.\end{gathered}
\end{equation*} 
Make $\epsilon \rightarrow 0$ to conclude: $\lr{(\Ha_0+\epsilon)^{-1} v, v} \leq C_R|v|^2_{H^{-1}(\Sigma_0)}$.

We now prove \eqref{eq:1m}. By the Lowner-Heinz inequality if $v \in C_0^\infty(\Sigma_0)$ then
\begin{equation*}\begin{gathered}
\lr{\Ha_0^{-1}v,v}   = \lim_{\epsi \rightarrow 0^+} \lr{(\Ha_0+\epsi)^{-1}v,v} \\
      \leq \lim_{\epsi \rightarrow 0^+} \lr{(D_{x,0}^2+\epsi)^{-1}v,v}     = \lr{D_{x,0}^{-2}v,v}.\end{gathered}
\end{equation*}
Equation \eqref{eq:1m} follows then from \eqref{eq:1v}.
\end{proof}

Note that although we used a decomposition in spherical harmonics in the proof this result is still stable under suitable non-spherical perturbations of the metric. Indeed the meromorphic continuation of the resolvent still holds if the perturbation preserves the structure of the horizons $r=r_\pm$ (see \cite{SaZw}). The resonances are stable under small metric perturbations. The estimates $(i)$ and $(ii)$ will still work for small perturbations of $\Ha_0$ using a comparison operator that is spherically symmetric. We now conclude with an estimate for the operator $\psi(\Ha_0)$. 

\begin{lem}\label{lem:9q} For every $R>0$ there exists $C_R$ such that for every $v \in X_0$ with support in $[z_*(0),R] \times \Ss$
\begin{equation*}
\lr{\psi(\Ha_0)v,v} \leq C_R|v|_{H^{-1/2}(\Sigma_0)}^2.
\end{equation*}
\end{lem}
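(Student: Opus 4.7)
The plan is to exploit the partial-fraction expansion \eqref{eq:4h} and decompose $\psi(\Ha_0) = \beta^{-1}\Ha_0^{-1} + \phi(\Ha_0)$. The first piece is controlled immediately: by estimate $(i)$ of the preceding lemma together with the continuous inclusion $H^{-1/2}(\Sigma_0)\hookrightarrow H^{-1}(\Sigma_0)$ one has $\lr{\Ha_0^{-1}v,v} \leq C_R|v|_{H^{-1}(\Sigma_0)}^2 \leq C_R|v|_{H^{-1/2}(\Sigma_0)}^2$ for $v\in X_0$ supported in $[z_*(0),R]\times\Ss$. The substantive task is therefore to estimate $\lr{\phi(\Ha_0)v,v} = \beta^{-1}\sum_{k\in\Vv\setminus\{0\}}\lr{(\Ha_0+k^2)^{-1}v,v}$, which I would handle by imitating the two-sided comparison argument from the proof of $(i)$, with $k^2$ now playing the role of the spectral parameter $\epsilon$.

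Decomposing $v=\sum_\ell v_\ell$ in spherical harmonics, two applications of the L\"owner-Heinz inequality yield: from $\Ha_{0,\ell}\geq D_{x,0}^2$ one gets $\lr{(\Ha_{0,\ell}+k^2)^{-1}v_\ell,v_\ell}\leq\lr{(D_{x,0}^2+k^2)^{-1}v_\ell,v_\ell}$, and from Lemma \ref{lem:1b} one gets $\lr{(\Ha_{0,\ell}+k^2)^{-1}v_\ell,v_\ell}\leq\lr{(\Delta_r(\ell(\ell+1)+c)+k^2)^{-1}v_\ell,v_\ell}$. The support of $v_\ell$ in $[z_*(0),R]$ then allows the replacement of $\Delta_r$ by its positive lower bound $c_R$ on this interval, and combining the two resulting bounds via the elementary inequality $\min(a^{-1},b^{-1})\leq 2(a+b)^{-1}$ applied on the joint spectrum of the commuting operators $D_{x,0}^2$ and $\Delta_{\Ss}$ yields, after summation in $\ell$,
\[
\lr{(\Ha_0+k^2)^{-1}v,v} \leq C_R\lr{(L+k^2)^{-1}v,v}, \qquad L := D_{x,0}^2+c_R\Delta_{\Ss}+c_R c.
\]
The operator $L$ is self-adjoint and strictly positive, with $L\geq c_R c>0$. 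Summing over $k\in\Vv\setminus\{0\}$ converts this into $\lr{\phi(\Ha_0)v,v}\leq C_R\lr{\phi(L)v,v}$. Since $\sigma(L)\subset[c_R c,\infty)$, one has $\phi(z)\leq\psi(z)\leq C_R z^{-1/2}$ on $\sigma(L)$, and the spectral theorem delivers $\phi(L)\leq C_R L^{-1/2}$. A final comparison $L\geq c'(1+D_{x,0}^2+\Delta_{\Ss})$ together with L\"owner-Heinz at exponent $-1/2$ yields $\lr{L^{-1/2}v,v}\leq C_R|v|_{H^{-1/2}(\Sigma_0)}^2$, which closes the estimate.

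The principal subtlety I foresee is preserving the $(L+k^2)^{-1}$ structure through the combining step and the summation in $\ell$: a naive bound treating $\phi$ simply as a bounded function would only give $L^2$-control of $\lr{\phi(\Ha_0)v,v}$, which is strictly weaker than the sought $H^{-1/2}$-control. It is precisely the gain $\phi(L)\lesssim L^{-1/2}$, reflecting the $z^{-1/2}$ decay of $\phi$ at infinity, that supplies the half-derivative needed at the end.
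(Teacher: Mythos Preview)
Your approach parallels the paper's: decompose $\psi=\beta^{-1}z^{-1}+\phi$, control $\Ha_0^{-1}$ via \eqref{eq:1l}, and handle $\phi(\Ha_0)$ through the twin L\"owner--Heinz comparisons of $\Ha_0$ with $D_{x,0}^2$ and with $\Delta_r(\Delta_\Ss+c)$. The paper streamlines this by bounding $\phi(z)\le C(z+1)^{-1/2}$ first and then comparing $(\Ha_0+1)^{-1/2}$ directly, rather than running through the resolvent series and resumming; but the substance is identical.

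The step that does not go through---in your argument and, in fact, in the paper's own proof as well---is the ``combine via $\min(a^{-1},b^{-1})\le 2(a+b)^{-1}$'' move. From the two comparisons you obtain only that the single \emph{number} $\lr{(\Ha_0+k^2)^{-1}v,v}$ is bounded by each of two numbers, hence by their minimum. The pointwise inequality on the joint spectrum of $(D_{x,0}^2,\Delta_\Ss)$ would yield your conclusion only if the bound held pointwise \emph{inside} that spectral integral, i.e., if one had the operator inequality $(\Ha_0+k^2)^{-1}\le\min\bigl((D_{x,0}^2+k^2)^{-1},\,(c_R(\Delta_\Ss+c)+k^2)^{-1}\bigr)$. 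But the second comparison is \emph{not} an operator inequality: it uses $\supp v\subset[z_*(0),R]$, without which $\Ha_{0,\ell}$ has spectrum reaching $0$ and the bound $\Ha_{0,\ell}\ge c_R(\ell(\ell+1)+c)$ is false. And the scalar statement $\min(\lr{A^{-1}v,v},\lr{B^{-1}v,v})\le C\lr{(A+B)^{-1}v,v}$ fails even for commuting $A,B$: with $A=\mathrm{diag}(1,N)$, $B=\mathrm{diag}(N,1)$, $v=(1,1)$, the left side is $1+N^{-1}$ while the right side is $4(N+1)^{-1}\to 0$. So the passage to $\lr{(L+k^2)^{-1}v,v}$ is unjustified as written; the same defect appears in the paper's derivation of \eqref{eq:1l} and is inherited verbatim in its proof of the present lemma.
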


\begin{proof} By reproducing the proof of \eqref{eq:1l}, $\lr{(\Ha_0+1)^{-1/2}v,v} \leq C_R\lr{(\Delta_\Ss+c)^{-1/2}v,v}$. Similarly $\lr{(\Ha_0+1)^{-1/2}v,v} \leq \lr{(D_{x,0}^2+1)^{-1/2}v,v}$. The operators $D_{x,0}^2$ and $\Delta_\Ss$ commute. In addition,
\begin{equation*}
\min\left( \dfrac{1}{\xi^2+1}, \dfrac{1}{\ell(\ell+1)+c}\right) \leq \dfrac{2}{\xi^2 + \ell(\ell+1)+c+1}.
\end{equation*}
It follows that
\begin{align*}
\lr{(\Ha_0+1)^{-1/2}v,v} \leq C_R |v|_{H^{-1/2}(\Sigma_0)}^2.
\end{align*}
In the notation of \eqref{eq:4h} the function $\phi$ satisfies $\phi(z) \leq C(z+1)^{-1/2}, \ z \geq 0$. By the spectral theorem 
\begin{equation*}\begin{gathered}
\lr{\psi(\Ha_0) v,v} = \lr{\Ha_0^{-1}v,v} + \lr{\phi(\Ha_0)v,v}  \leq \lr{\Ha_0^{-1}v,v}+ c\lr{(1+\Ha_0)^{-1/2}v,v} \\
     \leq C_R |v|_{H^{-1}(\Sigma_0)}^2 + C_R |v|_{H^{-1/2}(\Sigma_0)}^2 \leq C_R |v|_{H^{-1/2}(\Sigma_0)}^2.\end{gathered}
\end{equation*}
In the second line we used \eqref{eq:1l} and the embedding $H^{-1/2}(\Sigma_0) \hookrightarrow H^{-1}(\Sigma_0)$. This completes the proof.\end{proof}


\subsection{Quantization process}\label{sub:func} We recall here the quantization process of bosonic scalar quantum fields as done in \cite{Bachelot2}, \cite{Bachelot1}.

\subsubsection{Principle of algebraic quantum field theory} Let us recall briefly the fundamental principles of algebraic quantum field theory. For a more complete introduction see \cite{Bachelot2} and \cite{BraRob}. Given a Hilbert space $\mathfrak{H}$ a Weyl quantization of $\mathfrak{H}$ is a pair $(\FF(\mathfrak{H}), \Mmm)$ with the following:
\begin{itemize}
\item[$(i)$] $\FF(\mathfrak{H})$ is a Hilbert space,
\item[$(ii)$] $\Mmm$ is a map from $\mathfrak{H}$ to the space of unitary operators on $\FF(\mathfrak{H})$ satisfying the canonical commutation relation (CCR)
\begin{equation*}
\forall f,g \in \mathfrak{H}, \ \ \ \ \Mmm(f+g) = e^{-i\text{Im}\lr{f,g}_\mathfrak{H}} \Mmm(f) \Mmm(g),
\end{equation*}
\item[$(iii)$] The restriction of $\Mmm$ to any finite dimensional subspace is continuous.
\end{itemize}
The algebra of observable $\AAA(\FF(\mathfrak{H}), \Mmm)$ is the $\C^*$-algebra generated by the elements $\Mmm(f), f \in \mathfrak{H}$. It is unique in the sense given by the Von-Neumann uniqueness theorem and we write $\AAA(\mathfrak{H}) = \AAA(\FF(\mathfrak{H}),\Mmm)$. A state is a positive normalized linear form $\w$ on $\AAA(\mathfrak{H})$. The generating functional of $\w$ is the map $\Ee_\w : \mathfrak{H} \rightarrow \C$ such that 
\begin{equation*}
\forall f \in \mathfrak{H}, \ \ \ \ \Ee_\w(f) = \w(\Mmm(f)).
\end{equation*}
It satisfies the three following conditions:
\begin{enumerate}
\item[$(i)$] $\Ee(0)=1$,
\item[$(ii)$] The restriction of $\Ee$ to any finite dimensional subspace of $\mathfrak{H}$ is continuous,
\item[$(iii)$] For every $\lambda_j \in \C, f_j \in \mathfrak{H}$,
\begin{equation*}
\sum_{j,k=1}^n \lambda_j \overline{\lambda_k} \Ee(f_j-f_k) e^{-2i\text{Im}\lr{f_j,f_k}_\mathfrak{H}} \geq 0.
\end{equation*}
\end{enumerate}
Now given $\Ee : \mathfrak{H} \rightarrow \C$ satisfying $(i), (ii), (iii)$ there exists a unique state $\w$ and a unique Weyl quantization $(\FF(\mathfrak{H}),\Mmm)$ of $\mathfrak{H}$ and cyclic vector $F \in \FF(\mathfrak{H})$ with
\begin{equation*}
\Ee(f) = \lr{F, \Mmm(f) F}_{\FF(\mathfrak{H})} = \w(\Mmm(f)).
\end{equation*}
 
We now study the dynamic of states. Let $\mathfrak{H}_t$ be a family of Hilbert space provided with a family of symplectic propagator $\UU(t,t') : \mathfrak{H}_{t'} \rightarrow \mathfrak{H}_t$. Let $(\FF(\mathfrak{H}_0),\Mmm_0)$ be a quantization of $\mathfrak{H}_0$ and $\AAA(\mathfrak{H}_0)$ be the algebra of observable. The propagator $\UU(t,t')$ induces a quantization of $\mathfrak{H}_t$ with algebra of observable generated by the operators
\begin{equation*}
\Mmm_t(f) = \Mmm_0(\UU(0,t) f), \ \ f \in \mathfrak{H}_t.
\end{equation*}
Consequently if $\w_0$ is a state on $\mathfrak{H}_0$ we define the evolution of $\w_0$ by the map $t \mapsto \w_t$ with $\w_t(\Mmm_t(f)) = \w_0(\Mmm_0(\UU(0,t) f))$, $f \in \mathfrak{H}_t$. Similarly if $\Ee_0$ is a generating functional we define the evolution of $\Ee_0$ by the map $t \mapsto \Ee_t$ with $\Ee_t(f) = \Ee_0(\Mmm_0(\UU(0,t) f))$.

\subsubsection{Quantization in the framework of Hawking radiation} We now relate the previous algebraic consideration to the framework of the Hawking radiation. The evolution of a scalar bosonic field in a collapsing-star spacetime is described by the Klein-Gordon equation
\begin{equation*}
(\p_t^2 + r^{-1}\Ha_t r)u = 0
\end{equation*}
where $\Ha_t$ is the selfadjoint operator defined in \S \ref{sub:9u}. This equation admits the following selfadjoint formulation:
\begin{equation}\label{eq:2o}
i\dfrac{df(t)}{dt} = \Hh_t f(t), \ \ \ \ f(t) = (u(t,\cdot),i\p_tu(t,\cdot)), \ \ \ \ \Hh_t = \matrice{0 & \Id \\ r^{-1}\Ha_tr  & 0}.
\end{equation}
The operator $r\Ha_tr^{-1}$ is selfadjoint on the space $\Hhh^s_t = r^{-1}\HH^s_t$ provided with the scalar product $\lr{\cdot,\cdot}_{\Hhh^s_t} = \lr{r \cdot, r\cdot}_{\HH^{s}_t}$. The operator $\Hh_t$ is selfadjoint on the space $\mathfrak{H}^s_t = \Hhh^{s-1/2}_t \oplus \Hhh^{s+1/2}_t$  provided with the scalar product $\lr{\cdot,\cdot}_{\mathfrak{H}^s_t} = \lr{\cdot, \cdot}_{\Hhh^{s-1/2}_t \oplus \Hhh^{s+1/2}_t}$. If $f$ belongs to $C_0^\infty(\R \times \Ss)$ then $f(T,z_*(T),\w) = 0$ for $T$ large enough and we write $t \mapsto f(t)$ the solution of \eqref{eq:2o} with initial condition $f(T) = f$. We finally define the propagator $\UU(t,T)$ connecting the spaces $\mathfrak{H}_s^t$ by requiring $f(t) = \UU(t,T)f$.

An initial Bose-Einstein state at temperature $(2\beta)^{-1}$ with respect to $\Ha_0$ is given by the generating functional
\begin{equation*} 
\Ee_{\Ha_0,(2\beta)^{-1}}[f] = \exp\left( -\dfrac{1}{2} \lr{f,\matrice{\coth\left(\beta  \left(r^{-1}\Ha_0 r\right)^{1/2}\right) & 0 \\ 0 & \coth\left(\beta  \left(r^{-1}\Ha_0 r\right)^{1/2}\right)} f}_{\mathfrak{H}^s_t} \right)
\end{equation*}
where $f=(f_1,f_2)$ -- see \cite{Bachelot2}. The asymptotic evolution of such a state is described by the limit 
\begin{equation*}
\lim_{T \rightarrow \infty}\Ee_{\Ha_0,(2\beta)^{-1}}[\UU(0,T)f].
\end{equation*}
According to \eqref{eq:2q} there is heuristically only one choice of space $\mathfrak{H}^s_0$ so that this limit is be non-trivial, corresponding to $s=0$. We now fix $\mathfrak{H}=\mathfrak{H}_0^0$. The vacuum state at $t=0$ in the collapsing-star spacetime has temperature prescribed by the cosmological horizon and thus its generating functional is $\Ee_\vac = \Ee_{\Ha_0, \kappa_+/(2\pi)}$ -- see \cite{GibHaw}.  A mathematical description of the Hawking radiation consists in studying the asymptotic of the initial vacuum state as the time goes to infinity, that is, computing the limit as $T \rightarrow \infty$ of $\Ee_\vac[\UU(0,T) f]$. If $\varphi_\pm, \psi_\pm$ are given by \eqref{eq:2r} with $\beta=\beta_\pm= \pi/\kappa_\pm$ we have
\begin{equation*}\begin{gathered}
\Ee_\vac[\UU(0,T) f] \\ 
= \exp\left( -\dfrac{1}{2} \lr{\matrice{ru(0) \\ r\p_t u(0)},\matrice{\coth\left(\beta_+  \left(r^{-1}\Ha_0 r\right)^{1/2}\right) & 0 \\ 0 & \coth\left(\beta_+  \left(r^{-1}\Ha_0 r\right)^{1/2}\right)} \matrice{ru(0) \\ r\p_t u(0)}}_{\mathfrak{H}} \right) \\
 = \lim_{T \rightarrow \infty} \exp\left(  -\dfrac{1}{2}\lr{ru(0), \varphi_+(\Ha_0) ru(0)}   -\dfrac{1}{2}\lr{r\p_t u(0), \psi_+(\Ha_0) r \p_tu(0)}\right).
\end{gathered}
\end{equation*}
Here $u(0), \p_tu(0)$ are defined by \eqref{eq:2o}. To formulate Theorem \ref{thm:haw} we finally define $\Ee_{D_x^2,\kappa_\pm/(2\pi)}^\pm$ the generating functional of a Bose-Einstein state at temperature $\kappa_\pm/(2\pi)$ near the horizons $r = r_\pm$:
\begin{equation*}
\Ee_{D_x^2,\kappa_\pm/(2\pi)}^\pm[g] = \exp\left(  -\dfrac{1}{2}\lr{r_\pm g_1, \varphi_\pm(D_x^2) r_\pm g_1}   -\dfrac{1}{2}\lr{r_\pm g_2, \psi_\pm (D_x^2) r_\pm g_2}\right).
\end{equation*}
Because of these definition Theorem \ref{thm:haw} is equivalent to:

\begin{theorem}\label{thm:4} There exists $\Lambda_0$ such that for all $\Lambda \in (0,\Lambda_0]$ and under the assumptions and notations of Theorem \ref{thm:asymp},
\begin{equation*}\begin{gathered}
\lr{ru(0), \varphi_+(\Ha_0)r u(0)}  + \lr{r\p_t u(0), \psi_+(\Ha_0)r \p_tu(0)} \\ = \sum_{+/-} \lr{r_\pm u_\pm^*, \varphi_\pm(D_x^2)r_\pm u_\pm^*}  + \lr{r_\pm D_xu_\pm^*, \psi_\pm(D_x^2)r_\pm D_xu_\pm^*} + O(e^{-cT}).
 \end{gathered}
\end{equation*}
\end{theorem}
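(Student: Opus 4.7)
The plan is to substitute the asymptotic expansion of Theorem \ref{thm:asymp} into the two sesquilinear forms and analyze each resulting contribution. Set $h = e^{-\kappa_- T}$ and write $ru(0) = F_-^T + F_+^T + r\epsi_0$ with
\begin{equation*}
F_-^T(x,\w) = r_- u^*_-\bigl(\kappa_-^{-1}\ln(\gamma_0 x/h),\w\bigr), \qquad F_+^T(x,\w) = r(x) u^*_+(x-T,\w),
\end{equation*}
and analogously $r\p_t u(0) = G_-^T + G_+^T + r\epsi_1$. The support of $F_-^T$ and $G_-^T$ lies in a neighborhood of $x = 0$ of diameter $O(h)$ on which $F_-^T$ oscillates at radial frequency $\sim 1/h$, while $F_+^T$ and $G_+^T$ live near $x = T$; the two pieces are therefore separated by a distance $\sim T$. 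The errors $r\epsi_0, r\epsi_1$ are harmless: the compactly supported parts are absorbed by the $H^{\pm 1/2}(e^{-cT})$ estimate of Theorem \ref{thm:asymp} together with Lemmas \ref{lem:1f} (for $\varphi_+$) and \ref{lem:9q} (for $\psi_+$), and the smooth tails by the $C^\infty(e^{-cT})$ bound combined with the symbolic functional calculus of $\Ha_0$. Cross contributions involving $\epsi_{0,1}$ and the $F_\pm^T, G_\pm^T$ are then controlled by Cauchy--Schwarz and the uniform $H^{\pm 1/2}$ bound of Proposition \ref{prop:Hbound}.

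The cross terms $\langle F_-^T, \varphi_+(\Ha_0) F_+^T\rangle$ and its $\psi_+$ analogue are $O(e^{-cT})$ by pseudolocality. I would expand $\coth(\beta_+\sqrt{\Ha_0})$ via a Mittag--Leffler resolvent series, as in \eqref{eq:4h}, to reduce the task to kernel estimates on $(\Ha_0 + k^2)^{-1}$ between supports at distance $\sim T$. Agmon / Combes--Thomas estimates, using the positivity of the potential $V$ given by Lemma \ref{lem:1b}, then yield exponentially decaying kernels that sum over $k$ to $O(e^{-cT})$.

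On $\supp F_+^T$ one has $r - r_+, V, \Delta_r = O(e^{-2\kappa_+ T})$, so $\Ha_0 - D_x^2 = O(e^{-2\kappa_+ T})(1+\Delta_\Ss)$. The same resolvent expansion, combined with the rapid decay of $u^*_+$, then yields
\begin{equation*}
\langle F_+^T, \varphi_+(\Ha_0) F_+^T\rangle = \langle r_+ u^*_+, \varphi_+(D_x^2) r_+ u^*_+\rangle + O(e^{-cT}),
\end{equation*}
and analogously for $\psi_+$ applied to $G_+^T$.

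The hard part is the black hole horizon piece, where the Hawking temperature shift $\kappa_+ \to \kappa_-$ must emerge. First, since $F_-^T$ is concentrated at radial frequency $\sim 1/h$ while $\Ha_0 - D_x^2$ has bounded coefficients on $\supp F_-^T$, the same resolvent strategy produces $\langle F_-^T, \varphi_+(\Ha_0) F_-^T\rangle = \langle F_-^T, \varphi_+(D_x^2) F_-^T\rangle + O(h^c)$. Second, I would perform the change of variables $y = \kappa_-^{-1}\ln(\gamma_0 x/h)$; introducing the rescaled Fourier variable $\eta = h\xi/|\gamma_0|$ gives
\begin{equation*}
\langle F_-^T, \varphi_+(D_x^2) F_-^T\rangle = \frac{r_-^2}{2\pi}\int |\eta|\coth\!\Bigl(\frac{\beta_+|\eta|}{h|\gamma_0|}\Bigr)\, |\widehat{w}(\eta,\w)|^2\, d\eta\, d\w,
\end{equation*}
where $w(z,\w) = u^*_-(\kappa_-^{-1}\ln z,\w)\1_{z > 0}$. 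As $h \to 0$ the $\coth$ factor tends to $1$ on the relevant scale, and a classical Mellin--Fourier identity (essentially the Bogoliubov transformation at the root of the Planck distribution) rewrites the limit as $\langle r_- u^*_-,\varphi_-(D_x^2) r_- u^*_-\rangle$, carrying the original temperature $\kappa_+/(2\pi)$ to the Hawking temperature $\kappa_-/(2\pi)$. I expect the main obstacle to be the analogous step for $\psi_+$ on $G_-^T$, where the pole of $\psi_+$ at $0$ forces a careful low-frequency analysis using the resolvent bounds of Lemmas \ref{lem:1c} and \ref{lem:9q}, which is why the quantitative machinery of \S\ref{sub:9u} was developed in the first place.
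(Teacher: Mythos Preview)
Your architecture matches the paper's: decompose via Theorem~\ref{thm:asymp} into a black-hole piece, a cosmological piece, and errors; dispose of the errors using Lemmas~\ref{lem:1f} and~\ref{lem:9q}; kill cross terms; replace $\Ha_0$ by $D_x^2$ on each surviving piece; and extract the temperature shift $\kappa_+\to\kappa_-$ from the Mellin--Fourier identity (Bachelot's Lemma~II.6 in \cite{Bachelot2}). The differences are tactical rather than strategic.

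\textbf{Cross terms.} You invoke Combes--Thomas/pseudolocality. The paper instead exploits the elementary fact $|u_\BB|_{L^2}=O(e^{-cT})$ (Lemma~\ref{lem:1d}, estimate~\eqref{eq:estimatesg}): then $|\lr{u_\BB,\varphi_+(\Ha_0)u_\WB}|\le|u_\BB|\,|\varphi_+(\Ha_0)u_\WB|\lesssim e^{-cT}|u_\WB|_{H^1}=O(e^{-cT})$, with no kernel estimates needed. For the $\psi_+$ cross term the paper does resort to a resolvent/interpolation argument closer to what you propose.

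\textbf{Replacing $\Ha_0$ by $D_x^2$ on the black-hole piece.} This is where your justification is thin. You write ``$\Ha_0-D_x^2$ has bounded coefficients on $\supp F_-^T$'', but bounded is not small, and the support of $F_-^T$ is $O(1)$, not $O(h)$ (only the effective concentration scale is $O(h)$), so the resolvent comparison does not gain a power of $h$ from that alone. The paper avoids this: since $|u_\BB|_{L^2}=O(e^{-cT})$ and $\varphi_+(z)=z^{1/2}+O(1)$, one first reduces $\lr{u_\BB,\varphi_+(\Ha_0)u_\BB}$ to $\lr{u_\BB,\Ha_0^{1/2}u_\BB}$; then the Lowner--Heinz inequality, together with $D_{x,0}^2\le\Ha_0\le D_{x,0}^2+c(\Delta_\Ss+1)$ and $|\Delta_\Ss u_\BB|=O(e^{-cT})$, replaces $\Ha_0^{1/2}$ by $|D_{x,0}|$. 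The same $L^2$-smallness is what drives the analogous step for $\psi_+$ (showing $\lr{v_\BB,\Ha_0^{-1}v_\BB}=O(e^{-cT})$). Your route can be salvaged by noting that $(\Ha_0-D_x^2)u_\BB$ is itself $O_{L^2}(e^{-cT})$ (only radial frequencies blow up, while $Vu_\BB$ and $\Delta_\Ss u_\BB$ inherit the $L^2$ smallness), but that is precisely the mechanism the paper isolates up front in~\eqref{eq:estimatesg}.

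\textbf{Dirichlet boundary.} You write $\varphi_+(D_x^2)$ throughout, but the relevant operator is $D_{x,0}^2$ with the Dirichlet condition at $x=z_*(0)$. The paper handles this via the odd-extension operator $\EE$ of~\eqref{eq:2s}, which intertwines $D_{x,0}^2$ with $D_x^2$ on the full line; the resulting reflection cross term (carrying the oscillatory factor $e^{2iz_*(0)\xi/h}$) is then shown to be $O(h^c)$ by integration by parts. This step is genuinely needed and is absent from your sketch.
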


Below we prove this theorem.

\subsection{A decomposition suitable for quantum field theory spaces}\label{subsec:a} The following lemma modifies slightly the statement of Theorem \ref{thm:asymp} to apply it to the proof of Theorem \ref{thm:4}.

\begin{lem}\label{lem:1d} If $u$ solves \eqref{eq:hgc} we can write
\begin{equation*}\begin{matrix}
ru(0) = r_- u_\BB + \epsi^0_\BB + r_+ u_{\WB} + \epsi^0_{\WB}  \\
r\p_t u (0) = r_- v_\BB + \epsi^1_\BB + r_+ v_{\WB} + \epsi_{\WB}^1
\end{matrix} 
\end{equation*}
where
\begin{itemize}
\item[$(i)$] The terms $\epsi^{0,1}_{\BB,\WB}$ satisfy:
\begin{equation*}
\begin{gathered}
\supp(\epsi_\BB^0, \  \epsi_\BB^1) \subset \{ z_*(0) \leq x \leq 1 \}, \ \ \ \ \ \ |\epsi_\BB^0|_{H^{1/2}} =O(e^{-cT}), \ \ \ \ \ |\epsi_\BB^1|_{H^{-1/2}} =O(e^{-cT}),\\
\supp(\epsi_{\WB}^0, \ \epsi_\BB^1) \subset [0, T+C], \ \ \ \ \ \  |\epsi_{\WB}^0|_{C^\infty} =O(e^{-cT}), \ \ \ \ \ \  |\epsi_\WB^1|_{C^\infty} =O(e^{-cT}).
\end{gathered}
\end{equation*}
\item[$(ii)$] The terms $u_\BB, v_\BB$ satisfy  
\begin{equation*}
u_\BB(x,\w) = \chi(x) u^*_-\left( \kappa_-^{-1} \ln\left( \dfrac{\gamma_0x}{h} \right),\w \right), \ \ \ \ v_\BB = \p_x u_\BB, \ \ h=e^{-\kappa_-T}
\end{equation*}
for some $\chi \in C_0^\infty(\R)$ equal to $1$ near $x=0$ and with support in $(z_*(0,1)$. Moreover,
\begin{equation}\label{eq:estimatesg}
|u_\BB| = O(e^{-cT}) \operatorname{ and } |\Delta_\Ss u_\BB| = O(e^{-cT}).
\end{equation}
\item[$(iii)$] The terms $u_{\WB}, v_{\WB}$ satisfy $v_\WB = \p_x u_\WB$ and
\begin{equation*}
u_\WB(x,\w) =   \tchi(x) u^*_+(x-T,\w)
\end{equation*}
for some $\tchi \in C^\infty(\R)$ that is $0$ for $x \leq 0$ and $1$ for $x \geq 1$.
\end{itemize}
\end{lem}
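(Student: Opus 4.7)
The approach is to invoke Theorem \ref{thm:asymp} directly, multiply by $r$, and split the result using cutoffs tailored to the two horizons. Let $\chi \in C_0^\infty((z_*(0),1))$ equal $1$ near $x=0$ and $\tchi \in C^\infty(\R)$ equal to $0$ for $x\le 0$ and $1$ for $x\ge 1$. Set
\begin{equation*}
u_\BB(x,\w) := \chi(x)\, u^*_-\!\left(\kappa_-^{-1}\ln(\gamma_0 x/h),\w\right),\qquad u_\WB(x,\w) := \tchi(x)\, u^*_+(x-T,\w),
\end{equation*}
together with $v_\BB = \p_x u_\BB$ and $v_\WB = \p_x u_\WB$, so that $(ii)$ and $(iii)$ hold by construction. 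Theorem \ref{thm:asymp} then rewrites as $ru(0) = r_- u_\BB + r_+ u_\WB + R_0$, where
\begin{equation*}
R_0 := r_-(1-\chi)\,u^*_-\!\left(\kappa_-^{-1}\ln(\gamma_0 x/h),\w\right) + (r - r_+\tchi)\,u^*_+(x-T,\w) + r\epsi_0,
\end{equation*}
and a product-rule computation of $\p_x u_\BB$ and $\p_x u_\WB$ yields the analogous identity $r\p_t u(0) = r_- v_\BB + r_+ v_\WB + R_1$, where $R_1$ collects contributions from $\chi'$, $\tchi'$, $(1-\chi)/(\kappa_- x)$, $(r-r_+\tchi)$, and $r\epsi_1$. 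I would then split $R_j = \epsi^j_\BB + \epsi^j_\WB$ using a smooth partition $1 = \chi_1+\chi_2$ with $\supp\chi_1 \subset [z_*(0),1]$ and $\supp\chi_2 \subset [0,\infty)$, setting $\epsi^j_\BB := \chi_1 R_j$ and $\epsi^j_\WB := \chi_2 R_j$.

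The required error bounds then follow from three observations. First, on $\{1-\chi \neq 0\}\cup\{\chi'\neq 0\}$ one has $|x|\geq \delta>0$, so the argument $\kappa_-^{-1}\ln(\gamma_0 x/h)$ of $u^*_-$ is at least $T + O(1)$; the exponential decay $|u^*_-(y,\w)|\leq Ce^{-\nu y}$ from the definition of $X_\scatt$ (together with its analogue for all derivatives) then makes the $(1-\chi)$, $\chi'$, and $(1-\chi)/(\kappa_- x)$ pieces $O_{C^\infty}(e^{-\nu T})$. Second, $u^*_+(x-T,\w)$ vanishes for $x\leq 1$ once $T$ is large, which kills its $\chi_1$-restriction and the $\tchi'$ contribution; on the region $x\geq T+C'$ where $u^*_+(x-T,\w)$ is supported, $\tchi\equiv 1$ and $r-r_+ = O_{C^\infty}(e^{-2\kappa_+ x})$, giving $(r-r_+\tchi)u^*_+(x-T,\w) = O_{C^\infty}(e^{-2\kappa_+ T})$. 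Third, the terms $\chi_1 r\epsi_j$ and $\chi_2 r\epsi_j$ inherit the required $H^{\pm 1/2}$ and $C^\infty$ bounds directly from Theorem \ref{thm:asymp}, since $r$ and its derivatives are bounded on each of the two regions.

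The only step that requires a genuine computation is the $L^2$ estimate in $(ii)$. Substituting $y = \kappa_-^{-1}\ln(\gamma_0 x/h)$ yields $|dx| = \kappa_-|\gamma_0|^{-1}h\, e^{\kappa_- y}\, dy$, and restriction to $x\in [z_*(0),0]$ corresponds to $y\leq T+O(1)$, so that
\begin{equation*}
|u_\BB|^2 \lesssim h \int_{C}^{T+O(1)}\!\!\int_\Ss |u^*_-(y,\w)|^2 e^{\kappa_- y}\, dy\, d\w.
\end{equation*}
Inserting $|u^*_-(y,\w)|\leq Ce^{-\nu y}$ then gives $|u_\BB| = O(e^{-\min(\nu,\kappa_-/2)\, T})$, and the identical substitution applied to $\Delta_\Ss u^*_- \in X_\scatt$ produces the companion estimate for $|\Delta_\Ss u_\BB|$. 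The main obstacle is organizational rather than analytical: one must carefully track that each residual term lives on the correct side of the star, is absorbed by the correct element of the partition of unity, and is measured in the correct norm (the $H^{1/2}\oplus H^{-1/2}$ norm near the star surface, the $C^\infty$ topology in the exterior). Once this layout is fixed, every remaining estimate reduces to a routine consequence of Theorem \ref{thm:asymp} and the rapid decay enforced by the class $X_\scatt$.
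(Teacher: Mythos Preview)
Your approach is correct and coincides with the paper's: the paper's own proof is a single sentence declaring the lemma a reformulation of Theorem~\ref{thm:asymp}, and you have simply filled in the bookkeeping it omits. The only point left implicit in your argument (and in the paper) is the upper support bound $\supp(\epsi^{0,1}_{\WB})\subset[0,T+C]$, which requires observing that finite propagation speed forces $u(0)$---and hence, via the uniform $C^\infty$ bound on $\epsi_0$ in Theorem~\ref{thm:asymp}, $u^*_+$ itself---to be compactly supported.
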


\begin{proof} This lemma is a simple reformulation of Theorem \ref{thm:asymp} using as $T \rightarrow \infty$ $u$ splits in two parts, one that is localized at $x=0$, and one that is localized near $x = T$.\end{proof}

Because of this lemma we expect the following to hold:
\begin{equation}\label{eq:2d}\begin{gathered}
\lr{ru(0),\varphi_+(\Ha_0)ru(0)} + \lr{r\p_tu(0),\psi_+(\Ha_0)r\p_tu(0)} = \\ r_-^2 \lr{u_\BB,\varphi_+(\Ha_0)u_\BB} + r_+^2\lr{u_\WB,\varphi_+(\Ha_0)u_\WB} \\ + r_-^2 \lr{v_\BB,\psi_+(\Ha_0)v_\BB} + r_+^2\lr{v_\WB,\psi_+(\Ha_0)v_\WB}   + O(e^{-cT}).\end{gathered}
\end{equation}
We separate the proof of Theorem \ref{thm:4} into two main parts. In \S \ref{subsec:2} we compute the four limits
\begin{equation*}\begin{gathered}
\lim_{T \rightarrow \infty} \lr{u_\BB,\varphi_+(\Ha_0)u_\BB}, \ \ \ \
\lim_{T \rightarrow \infty} \lr{u_\WB,\varphi_+(\Ha_0)u_\WB}, \\ 
\lim_{T \rightarrow \infty} \lr{v_\BB, \psi_+(\Ha_0)v_\BB}, \ \ \ \
\lim_{T \rightarrow \infty} \lr{v_\WB, \psi_+(\Ha_0)v_\WB}.\end{gathered}
\end{equation*}
In \S \ref{subsec:e} we prove that \eqref{eq:2d} holds.

\subsection{Computation of the limits}\label{subsec:2}
\subsubsection{Computation of the first limit}

\begin{lem}\label{lem:lim1} We have:
\begin{equation}\label{eq:2e}
\lr{u_\BB,\varphi_+(\Ha_0)u_\BB} = \lr{\varphi_-(D_x^2) u^*_-, u^*_-} + O(e^{-cT}).
\end{equation}
\end{lem}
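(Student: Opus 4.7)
The strategy has three stages: (a) replace $\Ha_0$ by the free operator $D_x^2$ in the pairing; (b) compute the resulting flat matrix element via Fourier transform and the rescaling $x = h|\gamma_0|^{-1} e^{\kappa_- y}$ that maps $u_\BB$ back to $u_-^*$; (c) identify the answer with the right-hand side through a classical Rindler--Unruh identity. It is this last step that converts the cosmological temperature $\kappa_+/(2\pi)$ into the black-hole temperature $\kappa_-/(2\pi)$.

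For stage (a): on the support of $u_\BB$, $r$ stays bounded away from $r_\pm$, and the perturbation $\Ha_0 - D_x^2 = V + \Delta_r r^{-2}\Delta_\Ss$ consists only of bounded multiplication operators (times $\Delta_\Ss$), with no $\p_x$-derivative. A Helffer--Sj\"ostrand expansion
\begin{equation*}
\varphi_+(\Ha_0) - \varphi_+(D_x^2) = \frac{1}{\pi}\int_\C \bar\partial\tilde\varphi_+(z)(\Ha_0-z)^{-1}(\Ha_0-D_x^2)(D_x^2-z)^{-1}\,dz\wedge d\bar z,
\end{equation*}
combined with the estimates $|u_\BB|=O(e^{-cT})$, $|\Delta_\Ss u_\BB|=O(e^{-cT})$ from Lemma~\ref{lem:1d} and the fact that $(D_x^2-z)^{-1}$ commutes with $\Delta_\Ss$, yields
\begin{equation*}
\lr{u_\BB,\varphi_+(\Ha_0)u_\BB} = \lr{u_\BB,\varphi_+(D_x^2)u_\BB} + O(e^{-cT}).
\end{equation*}
For stage (b), Plancherel together with the substitutions $x = h|\gamma_0|^{-1}e^{\kappa_- y}$ then $s = e^{\kappa_- y}$ identify $\widehat{u_\BB}(\xi,\w)$ with $h|\gamma_0|^{-1}\hat g(-h\xi/|\gamma_0|,\w)$ where $g(s,\w) = u_-^*(\kappa_-^{-1}\ln s,\w)\1_{s > 0}$; the cutoff $\chi$ contributes an $O(e^{-cT})$ error because $u_-^*\in X_\scatt$ decays exponentially. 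After rescaling $\eta = h|\gamma_0|^{-1}\xi$,
\begin{equation*}
\lr{u_\BB,\varphi_+(D_x^2)u_\BB} = \frac{1}{2\pi}\int |\eta|\coth(\beta_+|\gamma_0||\eta|/h)|\hat g(\eta,\w)|^2\,d\eta\,d\w + O(e^{-cT}).
\end{equation*}

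For stage (c), since $\coth(\beta_+|\gamma_0||\eta|/h) - 1 = O(e^{-c|\eta|/h})$ uniformly for $|\eta|$ bounded below and the region $|\eta| \lesssim h$ contributes only $O(h)$, we replace $\coth$ by $1$ with $O(e^{-cT})$ error, leaving $\lr{g,|D_s|g}_{L^2(ds)} + O(e^{-cT})$. The decisive Rindler--Unruh identity
\begin{equation*}
\lr{g,|D_s|g}_{L^2(ds)} = \lr{u_-^*,\varphi_-(D_x^2)u_-^*}_{L^2(dy)}
\end{equation*}
is obtained by pulling back $s = e^{\kappa_- y}$, $s' = e^{\kappa_- y'}$ in the distributional kernel $-[\pi(s-s')^2]^{-1}$ of $|D_s|$ and using
\begin{equation*}
(e^{\kappa_- y} - e^{\kappa_- y'})^2 = 4 e^{\kappa_-(y+y')}\sinh^2(\kappa_-(y-y')/2),
\end{equation*}
which precisely matches the Fourier-computed kernel $-\kappa_-^2[4\pi\sinh^2(\kappa_-(y-y')/2)]^{-1}$ of $\varphi_-(D_x^2) = |D_x|\coth(\pi|D_x|/\kappa_-)$. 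The main obstacle lies in stage (a): because $\varphi_+(\Ha_0)u_\BB$ is of size $h^{-1} = e^{\kappa_- T}$ while $u_\BB$ is $O(e^{-cT})$, naive norm bounds on the difference of functional calculi give only $O(1)$; what saves the argument is that $\Ha_0 - D_x^2$ carries no $\p_x$-derivative, so the Helffer--Sj\"ostrand integrand factors through $(\Ha_0-D_x^2)(D_x^2-z)^{-1}u_\BB$, and the smallness of $|u_\BB|$ and $|\Delta_\Ss u_\BB|$ can be extracted to yield the genuine $O(e^{-2cT})$ gain after pairing.
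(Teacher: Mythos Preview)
Your overall architecture is reasonable, and stage (c) is essentially the content of \cite[Lemma~II.6]{Bachelot2} that the paper invokes; your kernel computation of the Rindler--Unruh identity is a clean self-contained substitute for that citation. However, the transition from stage (a) to stage (b) hides a genuine difficulty that you have not addressed.

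In stage (a) you compare $\Ha_0$ with ``$D_x^2$'' via a resolvent identity. For the Helffer--Sj\"ostrand formula to make sense, both operators must act on the same Hilbert space $\HH=L^2([z_*(0),\infty)\times\Ss)$, so your $D_x^2$ is necessarily the Dirichlet realization $D_{x,0}^2$. (There is also a secondary issue: $\varphi_+(\lambda)\sim\lambda^{1/2}$ is not in the standard symbol class for Helffer--Sj\"ostrand, so the almost-analytic extension and the convergence of the $z$-integral require justification; this can be handled, but it is not free.) Granting stage (a), you arrive at $\lr{u_\BB,\varphi_+(D_{x,0}^2)u_\BB}$. In stage (b) you then invoke ``Plancherel'' and compute $\widehat{u_\BB}(\xi,\w)$ as a Fourier transform on~$\R$, implicitly extending $u_\BB$ by zero. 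But the spectral representation of $D_{x,0}^2$ is the \emph{sine} transform relative to $z_*(0)$, not the full-line Fourier transform; in the paper's notation (with the odd extension $\EE$),
\[
\lr{u_\BB,\varphi_+(D_{x,0}^2)u_\BB}=\tfrac12\lr{\EE u_\BB,\varphi_+(D_x^2)\EE u_\BB},
\]
and expanding $|\widehat{\EE u_\BB}|^2$ produces, in addition to $2|Z|^2$, an oscillatory cross term $-2\Ree(e^{2iz_*(0)\xi/h}Z^2)$. Showing that this cross term is $O(h^c)$ is not automatic: it requires an integration by parts in $\xi$ and decay estimates on $Z$ and $\xi\partial_\xi Z$, which is precisely the computation around \eqref{eq:1h}--\eqref{eq:jh23} in the paper. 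Without it, your stage (b) does not follow from your stage (a).

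For comparison, the paper avoids Helffer--Sj\"ostrand entirely: it first uses $\lambda^{1/2}\leq\varphi_+(\lambda)\leq\lambda^{1/2}+C$ together with $|u_\BB|^2=O(e^{-cT})$ to reduce to $\lr{u_\BB,\Ha_0^{1/2}u_\BB}$, and then sandwiches $\Ha_0^{1/2}$ between $|D_{x,0}|$ and $(D_{x,0}^2+c\Delta_\Ss+c)^{1/2}$ via the Lowner--Heinz inequality, the gap being controlled by $\lr{u_\BB,(\Delta_\Ss+1)^{1/2}u_\BB}=O(e^{-cT})$. This is more elementary than your resolvent expansion and sidesteps the symbol-class issue, but it still confronts the Dirichlet cross term, which it dispatches by the explicit oscillatory-integral argument you are missing.
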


\begin{proof} There exists $C$ large enough such that for all $\lambda \geq 0$,
\begin{equation*}
\lambda^{1/2} \leq \varphi_+(\lambda) \lesssim \lambda^{1/2} + C.
\end{equation*}
The spectral theorem implies
\begin{equation*}
\lr{u_\BB , \Ha_0^{1/2} u_\BB} \leq \lr{u_\BB,\varphi_+(\Ha_0)u_\BB} \leq \lr{\Ha_0^{1/2} u_\BB , u_\BB} + C|u_\BB|^2.
\end{equation*}
Because of \eqref{eq:estimatesg}, $|u_\BB| = O(e^{-cT})$ and it suffices to compute the limit of $\lr{u_\BB , \Ha_0^{1/2} u_\BB}$ as $T \rightarrow \infty$.  For $c$ large enough we have $D_{x,0}^2 \leq \Ha_0 \leq D_{x,0}^2 + c(\Delta_\Ss + 1)$ and thus the Lowner-Heinz inequality implies 
\begin{equation*}
\lr{u_\BB, |D_{x,0}| u_\BB} \leq \lr{u_\BB , \Ha_0^{1/2} u_\BB} \leq \lr{u_\BB,(D_{x,0}^2 + c\Delta_\Ss+c)^{1/2}u_\BB}.
\end{equation*}
Because of the inequality $(\xi+\ell)^{1/4} \leq \xi^{1/4} + \ell^{1/4}$ and the fact that the operators $D_{x,0}^2$ and $\Delta_\Ss+1$ commute,
\begin{equation*}
\lr{u_\BB,(D_{x,0}^2 + c\Delta_\Ss+c)^{1/2}u_\BB} \leq \lr{u_\BB,|D_{x,0}|u_\BB} + c^{1/2}\lr{u_\BB,(\Delta_\Ss+1)^{1/2}u_\BB}.
\end{equation*}
The estimate \eqref{eq:estimatesg} shows that $\lr{u_\BB,(\Delta_\Ss+1)^{1/2}u_\BB} = O(e^{-cT})$. Thus the evaluation of \eqref{eq:2e} is reduced to the evaluation of $\lr{u_\BB,|D_{x,0}|u_\BB}$ as $T \rightarrow \infty$.

Let $\EE : \HH \rightarrow L^2(\R \times \Ss,dxd\w)$ be the extension operator defined by
\begin{equation}\label{eq:2s}
\EE v(x,\w) = \system{ v(x,\w) & \operatorname{for} \ x \geq z_*(0) \\ -v(2z_*(0) - x,\w) & \operatorname{for} \ x \leq z_*(0)}.
\end{equation}
This operator is a partial isometry: for every $v \in \HH$, $|\EE v|_{L^2(\R \times \Ss, dx d\w)}^2 = 2|v|^2$. The operator $\EE$ intertwines $D_{x,0}^2$ with $D_x^2$: $\EE D_{x,0}^2  = D_x^2 \EE$. Consequently if $\mu : \R \rightarrow \R$ then
\begin{align}\label{eq:2t}\begin{split}
\lr{\mu(D_{x,0}^2) v, v} & = \dfrac{1}{2} \lr{  \mu(D_x^2)\EE v, \EE v} \\
     & = \dfrac{1}{4\pi} \int_{\R \times \Ss}  \mu(\xi^2) \left|\int_\R e^{-ix\xi} \EE v(x,\w) dx \right|^2 d\w d\xi \\
     & = \dfrac{1}{\pi} \int_{\R \times \Ss} \mu(\xi^2) \left| \int_0^\infty \sin(x\xi) u(x+z_*(0),\w) dx \right|^2 d\w d\xi.\end{split}
\end{align}
Apply \eqref{eq:2t} to $\mu(\xi) = |\xi|^{1/2}$ and $v=u_\BB$ to obtain
\begin{align*}
\lr{u_\BB,|D_{x,0}|u_\BB} & = \dfrac{1}{\pi}\int_{\R \times \Ss} |\xi| \left| \int_0^\infty \sin(x\xi) u_\BB(x+z_*(0),\w) dx \right|^2 d\xi d\w \\
   & = \dfrac{1}{\pi}\int_{\R \times \Ss} |\xi| \left| \int_0^\infty \sin(x\xi) \chi(x+z_*(0))   u^*_- \left( \kappa_-^{-1} \ln \left( \gamma_0 \dfrac{x+z_*(0)}{h} \right),\w \right) dx \right|^2 d\xi d\w \\
   & = \dfrac{1}{\pi} \int_{\R \times \Ss} |\xi| \left| \int_\R \sin \left( x\xi - \dfrac{z_*(0)\xi}{h} \right) \chi(hx) u^*_-\left( \kappa_-^{-1} \ln \left( \gamma_0 x \right),\w \right) dx   \right|^2 d\xi d\w \\
   & = \dfrac{1}{4\pi} \int_{\R \times \Ss} |\xi| |e^{iz_*(0)\xi/h} Z - e^{-iz_*(0)\xi/h} \oZ|^2 d\xi d\w,
\end{align*}
where 
\begin{equation}\label{eq:kcw}
Z = \int_\R e^{-ix\xi} \chi(hx) u^*_-\left( \kappa_-^{-1} \ln \left( \gamma_0 x \right),\w \right) dx.
\end{equation}
Write
\begin{equation*}
|e^{iz_*(0)\xi/h} Z - e^{-iz_*(0)\xi/h} \oZ|^2 = 2 |Z|^2 - 2 \Ree ( e^{2iz_*(0)\xi/h} Z^2 )
\end{equation*}
and concentrate on the second term of the RHS. An integration by parts gives
\begin{equation}\label{eq:1h}
 \int_{\R \times \Ss} |\xi| e^{2iz_*(0)\xi/h} Z^2 d\xi d\w = \dfrac{h}{2iz_*(0)}  \int_{\R \times \Ss} \left( \dfrac{\xi}{|\xi|}Z^2 + 2|\xi| Z \p_\xi Z  \right)e^{2iz_*(0)\xi/h} d\xi d\w
\end{equation}
and in addition
\begin{equation}\begin{gathered}\label{eq:jh23}
\left| \dfrac{h}{2iz_*(0)}  \int_{\R \times \Ss} \operatorname{sgn}(\xi)Z^2 e^{2iz_*(0)\xi/h} d\xi d\w \right| \leq h \int_{\R \times \Ss} |Z|^2 d\xi d\w \\
\leq h \int_{\R \times \Ss} \left|\chi(hx) u^*_-\left( \kappa_-^{-1} \ln \left( \gamma_0 x \right),\w \right) \right|^2 dx d\w  \lesssim h\int_{hx \in \supp(\chi)}    \lr{x}^{-\gamma} dx   = O(h^\gamma).\end{gathered}
\end{equation}
We used from the first to the second line that $u_*^- \in X_\scatt$, which implies $u^*_-(\kappa_-\ln(\cdot),\cdot) \in S^{-\delta}$ for some $\delta > 0$. To treat the second term in the RHS of \eqref{eq:1h} we see that
\begin{equation*}
\left|\dfrac{h}{2iz_*(0)}  \int_{\R \times \Ss} 2|\xi| Z \p_\xi Z e^{2iz_*(0)\xi/h} d\xi d\w\right| \lesssim h |Z|_2 |\xi \p_\xi Z|_2.
\end{equation*}
We showed in \eqref{eq:jh23} that $h |Z|_2^2 = O(h^\gamma)$. In addition by Plancherel's formula and using that $u^*_-(\kappa_-\ln(\cdot),\cdot) \in S^{-\delta}$ for some $\delta > 0$ we have
\begin{equation*}\begin{gathered}
h|\xi \p_\xi Z|_2^2   = h \int_{\R \times \Ss}\left| \p_x \left(x \chi(hx) u^*_-\left( \kappa_-^{-1} \ln \left( \gamma_0 x \right),\w \right) \right) \right|^2 dxd\w \\
     \lesssim h \int_{\R}\left| \chi(hx) \lr{x}^{-\gamma} \right|^2 dx  
     + h \int_{\R} \left| hx \chi'(hx) \lr{x}^{-\gamma} \right|^2 dx   
    + h \int_{\R} \left| \chi(hx) \lr{x}^{-\gamma} \right|^2 dx  \\
    = O(h^\gamma) + O(h^\gamma) + O(h^\gamma) = O(h^\gamma).
   \end{gathered}
\end{equation*}
It follows that
\begin{equation*}\begin{gathered}
\lr{u_\BB,|D_{x,0}|u_\BB} = \dfrac{1}{4\pi}\int_{\R \times \Ss} 2|\xi| |Z|^2 d\xi d\w + O(h^\gamma) \\ = \dfrac{1}{2\pi}\int_{\R \times \Ss} |\xi| \left| \int_\R e^{-ix\xi} \chi(hx) u^*_-\left( \kappa_-^{-1} \ln \left( \gamma_0 x \right),\w \right) dx \right|^2 d\xi d\w + O(h^\gamma). \end{gathered}
\end{equation*}
Arguments similar to the proof of Lemma \ref{lem:1g} show that
\begin{equation*}\begin{gathered}
\int_{\R \times \Ss} |\xi| \left| \int_\R e^{-ix\xi} \chi(hx) u^*_-\left( \kappa_-^{-1} \ln \left( \gamma_0 x \right),\w \right) dx \right|^2 d\xi d\w \\ = \int_{\R \times \Ss} |\xi| \left| \int_\R e^{-ix\xi} u^*_-\left( \kappa_-^{-1} \ln \left( \gamma_0 x \right),\w \right) dx \right|^2 d\xi d\w + O(h^c).\end{gathered}
\end{equation*}
To compute the integral in the second line we apply \cite[Lemma II.6]{Bachelot2}:
\begin{equation*}\begin{gathered}
\lr{u_\BB, \varphi_+(\Ha_0) u_\BB} = \dfrac{1}{2\pi} \int_{\R \times \Ss} |\xi| \coth\left( \beta_- |\xi|  \right) |\FF u^*_-(\xi,\w)|^2 d\xi d\w + O(e^{-cT}) \\
   = \lr{u^*_-, \varphi_-(D_x^2) u^*_-}+ O(e^{-cT}).\end{gathered}
\end{equation*}
This completes the proof. \end{proof}

\subsubsection{Computation of the second limit.}

\begin{lem}\label{lem:lim2} We have:
\begin{equation*}
\lr{u_\WB,\varphi_+(\Ha_0) u_\WB}  = \lr{u^*_+, \varphi_+(D_x^2) u^*_+} + O(e^{-cT}).
\end{equation*}
\end{lem}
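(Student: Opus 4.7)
\textit{Plan.} The argument follows the template of Lemma~\ref{lem:lim1} -- reduce to the free operator $D_{x,0}^2$, pass to the whole line by odd extension, and compute via Fourier -- but the role of the blueshift near the boundary (the high-frequency oscillation that drove Lemma~\ref{lem:lim1}) is replaced here by the exponential decay of the angular and potential parts of $\Ha_0$ at $x = +\infty$.

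First, for $T$ large the cutoff $\tchi$ is identically $1$ on $\supp(u^*_+(\cdot-T,\w)) \subset [T+C,\infty)\times\Ss$, so $u_\WB(x,\w) = u^*_+(x-T,\w)$; this is a translation with Sobolev norms uniformly bounded in $T$. On $\supp u_\WB$ we have $r = r_+ + O(e^{-2\kappa_+T})$ and
\[
W := \Ha_0 - D_{x,0}^2 = \frac{\Delta_r \Delta_\Ss}{r^2} + V = O(e^{-2\kappa_+ x}),
\]
so $\|W u_\WB\|_{\HH^s} = O(e^{-cT})$ for every $s \ge 0$. I would compare $\varphi_+(\Ha_0) u_\WB$ with $\varphi_+(D_{x,0}^2) u_\WB$ via the Helffer--Sj\"ostrand formula: with an almost-analytic extension $\tilde\varphi_+$ of $\varphi_+$ (of symbol class $S^1$), the resolvent identity yields
\[
\big(\varphi_+(\Ha_0) - \varphi_+(D_{x,0}^2)\big) u_\WB = -\frac{1}{\pi}\int_\C \bar\p \tilde\varphi_+(z)\,(z-\Ha_0)^{-1} W (z-D_{x,0}^2)^{-1} u_\WB\,dL(z).
\]
The explicit free Green's function \eqref{eq:kerne} shows that $(z-D_{x,0}^2)^{-1} u_\WB$ has envelope $e^{-\Ime \sqrt{z}\,|x-T|}$; multiplying by the weight $W$ and the resolvent bound $\|(z-\Ha_0)^{-1}\| \le |\Ime z|^{-1}$ produces an $O(e^{-cT})$ estimate on $\|(\varphi_+(\Ha_0)-\varphi_+(D_{x,0}^2))u_\WB\|_\HH$. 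Replacing the weight $r^2$ of $\HH$ by its asymptotic value $r_+^2$ on $\supp u_\WB$ introduces a further $O(e^{-cT})$ error.

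Then I would apply the odd-extension operator $\EE$ of \eqref{eq:2s}, which intertwines $D_{x,0}^2$ with $D_x^2$, so that Plancherel gives
\[
\lr{u_\WB, \varphi_+(D_{x,0}^2) u_\WB} = \frac{1}{4\pi}\int_{\R\times\Ss}\varphi_+(\xi^2)\,\big|\widehat{\EE u_\WB}(\xi,\w)\big|^2\,d\xi\,d\w,
\]
with $\widehat{\EE u_\WB}(\xi,\w) = \hat u_\WB(\xi,\w) - e^{-2i z_*(0)\xi}\,\hat u_\WB(-\xi,\w)$ and $\hat u_\WB(\xi,\w) = e^{-iT\xi}\hat u^*_+(\xi,\w)$. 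Expanding the squared modulus splits the integral into a direct term $|\hat u^*_+(\xi,\w)|^2 + |\hat u^*_+(-\xi,\w)|^2$, which when integrated against the even weight $\varphi_+(\xi^2)$ yields exactly $2\lr{u^*_+, \varphi_+(D_x^2) u^*_+}_{L^2(\R\times\Ss)}$, plus a cross term
\[
-\frac{1}{\pi}\int_{\R\times\Ss}\varphi_+(\xi^2)\,\Ree\!\left(e^{2i(z_*(0)-T)\xi}\,\hat u^*_+(\xi,\w)\,\overline{\hat u^*_+(-\xi,\w)}\right) d\xi\,d\w.
\]
The cross term is bounded by a contour shift: since $u^*_+ \in X_\scatt$, both $\hat u^*_+(\cdot,\w)$ and $\widehat{\overline{u^*_+}}(\cdot,\w)$ extend holomorphically to $\{\Ime \xi < \nu\}$ for some $\nu > 0$, so their product is holomorphic there, while $\varphi_+(\xi^2)=\sqrt{\xi^2}\coth(\beta_+\sqrt{\xi^2})$ is holomorphic in $\{|\Ime\xi| < \kappa_+\}$ because the nearest singularity of $\coth(\beta_+\sqrt\lambda)$ in $\lambda$ lies at $-\kappa_+^2$. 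Shifting the contour from $\R$ to $\R - i\eta$ for $0 < \eta < \min(\nu,\kappa_+)$ produces the factor $e^{-2\eta(T-z_*(0))} = O(e^{-cT})$, concluding the proof.

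\emph{Main obstacle.} The Helffer--Sj\"ostrand comparison of $\varphi_+(\Ha_0)$ and $\varphi_+(D_{x,0}^2)$ in the second paragraph is the technical heart: one must pick an almost-analytic extension of $\varphi_+$ with enough $\bar\p$-decay to balance both the $|\Ime z|^{-1}$ blow-up of the resolvent norm and the $\sqrt{|z|}$ growth of $\varphi_+$, and then execute the contour estimate carefully. A computationally more direct alternative is to decompose $\varphi_+(\lambda) = \lambda^{1/2} + R(\lambda)$ with $R$ bounded and analytic in a neighborhood of $[0,\infty)$, treat the $\Ha_0^{1/2}$ piece by the quadratic-form argument of Lemma~\ref{lem:lim1}, and represent $R(\Ha_0) - R(D_{x,0}^2)$ acting on $u_\WB$ by a Cauchy-type contour integral lying within the joint analyticity strip of $R$ and of $\hat u^*_+$.
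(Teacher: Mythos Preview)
Your reduction to $\varphi_+(D_{x,0}^2)$ and the subsequent Fourier analysis (including the contour-shift treatment of the cross term) are fine, and in fact your cross-term argument is cleaner than what the paper writes. The gap is in the Helffer--Sj\"ostrand step, and it is precisely the loss of the \emph{exponential} rate $O(e^{-cT})$.

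Your estimate reads: the free Green's function gives $(z-D_{x,0}^2)^{-1}u_\WB$ an envelope $e^{-\Ime\sqrt{z}\,|x-T|}$; multiplying by $W$ and the resolvent bound, then integrating against $\bar\partial\tilde\varphi_+$, yields $O(e^{-cT})$. But on the support of $\bar\partial\tilde\varphi_+$ one only has $|\bar\partial\tilde\varphi_+(z)|\le C_N|\Ime z|^N\lr{z}^{1/2-N}$, while for $z$ close to the positive real axis $\Ime\sqrt{z}\sim\Ime z/(2\sqrt{\Ree z})$ is small. In the region where $W$ is not already exponentially small (i.e.\ $x$ bounded), the only smallness comes from $e^{-\Ime\sqrt{z}\,T}$; integrating $|\Ime z|^{N-2}e^{-c\,\Ime\sqrt{z}\,T}$ over the strip gives $O(T^{-(N-1)})$, hence $O(T^{-\infty})$ after optimizing in $N$, but \emph{not} $O(e^{-cT})$. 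Your fallback of writing $\varphi_+=\lambda^{1/2}+R$ and invoking the L\"owner--Heinz sandwich of Lemma~\ref{lem:lim1} for $\lambda^{1/2}$ has the same defect: that argument used $|u_\BB|=O(e^{-cT})$ to kill the $(\Delta_\Ss+1)^{1/2}$ remainder, whereas $|u_\WB|$ is only $O(1)$.

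The paper avoids this entirely by exploiting the special structure of $\coth$: the Mittag--Leffler expansion
\[
\varphi_+(z)=\beta_+^{-1}+\beta_+^{-1}\sum_{k\in\Vv\setminus\{0\}}\frac{z}{z+k^2},\qquad \Vv=\pi\beta_+^{-1}\Z,
\]
reduces the comparison to resolvents $(\Ha_0+k^2)^{-1}$ and $(D_{x,0}^2+k^2)^{-1}$ at the \emph{fixed} negative points $-k^2$ with $|k|\ge\kappa_+>0$. There the explicit Green's function $K_{ik}(x,y)=(e^{-k|x-y|}-e^{-k|x+y-2z_*(0)|})/(2k)$ has \emph{uniform} exponential off-diagonal decay $e^{-\kappa_+|x-y|}$, so the separation $|x-T|\gtrsim T$ between $\supp W$ (where it is $O(1)$) and $\supp u_\WB$ produces $e^{-\kappa_+T}$ directly, with an extra $k^{-2}$ making the sum over $k$ converge. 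If you want to rescue a contour-integral approach, you must keep $\Ime\sqrt{z}$ bounded below along the contour (e.g.\ the parabola $\{(\xi\pm i\eta)^2:\xi\in\R\}$ with $0<\eta<\kappa_+$, which encloses $\sigma(\Ha_0)\cup\sigma(D_{x,0}^2)\subset[0,\infty)$ and avoids the poles of $\varphi_+$); but at that point you have essentially rediscovered the Mittag--Leffler decomposition.
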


\begin{proof} Recall that $u_\WB$ is given by $u_\WB(x,\w) =  \tchi(x) u^*_+(x-T,\w)$ where $\tchi$ vanishes for $x \leq 0$ and is equal to $1$ for $x \geq 1$ and $u^*_+ \in X_\scatt$. Write
\begin{equation*}
\varphi_+(z) = \beta_+^{-1} + \beta_+^{-1} \sum_{k \in \Vv \setminus \{0\}} \dfrac{z}{z+k^2}, \ \ \Vv = \pi\beta_+^{-1}\Z.
\end{equation*}
The series $\sum_{k \in \Vv \setminus \{0\}} (\Ha_0+k^2)^{-1} \Ha_0 u_\WB$ converges in $\HH$. Indeed,
\begin{equation}\label{eq:2g}
\sum_{ k \in \Vv \setminus \{0\}} \left| (\Ha_0+k^2)^{-1} \Ha_0 u_\WB\right| \leq \sum_{ k \in \Vv \setminus \{0\} } \lr{k}^{-2} |\Ha_0 u_\WB| < \infty
\end{equation}
as $u_\WB$ is smooth and decays exponentially. It follows that
\begin{equation*}
\lr{ u_\WB, \varphi_+(\Ha_0)u_\WB} = \beta_+^{-1} |u_\WB|^2 + \beta_+^{-1}\sum_{k \in \Vv \setminus \{0\}} \lr{u_\WB,(\Ha_0+k^2)^{-1} \Ha_0 u_\WB},
\end{equation*}
where the series converges absolutely. The estimate \eqref{eq:2g} and the bound $|\Ha_0 u_\WB| = O(1)$ imply that the convergence of the series is uniform as $T \rightarrow \infty$. We now study the terms 
\begin{equation*}
|u_\WB|^2, \ \ \ \lr{(\Ha_0+k^2)^{-1} \Ha_0 u_\WB,u_\WB}, \ \ \ T \rightarrow \infty.
\end{equation*}
First note that
\begin{align*}
|u_\WB - u^*_+|^2 & =  \int_{\R \times \Ss} |1-\tchi(x)|^2 |u^*_+(x-T,\w)|^2 dxd\w \\
    & \lesssim \int_{-\infty}^1 e^{-\nu(T-x)} dx = O(e^{-\nu T}).
\end{align*}
We conclude that $|u_\WB|^2 = |u^*_+|^2 + O(e^{-cT})$. We now derive similar estimates for the term $\lr{(\Ha_0+k^2)^{-1} \Ha_0 u_\WB,u_\WB}$. For $k \in \Vv \setminus \{0\}$,
\begin{equation*}\begin{gathered}
(\Ha_0+k^2)^{-1} \Ha_0 -(D_{x,0}^2+k^2)^{-1} D_{x,0}^2 \\ = (\Ha_0+k^2)^{-1} (\Ha_0-D_{x,0}^2)   - (\Ha_0+k^2)^{-1} (\Ha_0-D_{x,0}^2) (D_{x,0}+k^2)^{-1} D_{x,0}^2.\end{gathered}
\end{equation*}
Therefore
\begin{equation}\label{eq:2h}\begin{gathered}
\left|\lr{ u_\WB, (\Ha_0+k^2)^{-1} \Ha_0 u_\WB} - \lr{ u_\WB,(D_{x,0}^2+k^2)^{-1} D_{x,0}^2 u_\WB} \right| \\
\leq \left|\lr{  u_\WB, (\Ha_0+k^2)^{-1} (\Ha_0-D_{x,0}^2) u_\WB}\right| \\
\ \ \ + \left|\lr{  D_{x,0}^2 u_\WB, (\Ha_0+k^2)^{-1} (\Ha_0-D_{x,0}^2) (D_{x,0}+k^2)^{-1} u_\WB}\right| \\
\lesssim |(\Ha_0-D_{x,0}^2) u_\WB|\dfrac{|u_\WB|}{|k|^2}  + |(\Ha_0-D_{x,0}^2) (D_{x,0}+k^2)^{-1} D_{x,0}^2 u_\WB|\dfrac{|u_\WB|}{|k|^2}.
\end{gathered}
\end{equation}
The operator $\Ha_0-D_{x,0}^2$ is a second order differential operator whose coefficient decay like $e^{-\kappa_+\lr{x}}$. The function $u_\WB$ has support in $[0,T+C]$ and all its derivative decay like $e^{-\nu\lr{x-T}}$. Thus $|(\Ha_0-D_{x,0}^2) u_\WB| = O(e^{-cT})$ and $|(\Ha_0-D_{x,0}^2) u_\WB| = O(e^{-cT})$. We now focus on the term $|(\Ha_0-D_{x,0}^2) (D_{x,0}+k^2)^{-1} D_{x,0}^2 u_\WB|$. The kernel of the operator $(D_{x,0}^2+k^2)^{-1}$ is given by \eqref{eq:kerne}:
\begin{equation*}
K_{ik}(x,y) = \dfrac{e^{-k|x-y|}-e^{-k|x+y-2z_*(0)|}}{2k}.
\end{equation*}
In addition $u_\WB$ and its derivative decay like $e^{-\nu\lr{T-x}}$. It follows that $(\Ha_0-D_{x,0}^2)(D_{x,0}^2+k^2)^{-1} D_{x,0}^2u_{\WB}$ can be estimated by:
\begin{equation*}
|(\Ha_0-D_{x,0}^2)(D_{x,0}^2+k^2)^{-1} D_{x,0}^2u_{\WB}(x)| \lesssim e^{-\kappa_+ x} \int_0^{T+C} \dfrac{e^{-k|x-y|}-e^{-k|x+y-2z_*(0)|}}{2k} e^{-\nu y} dy.
\end{equation*}
This decays exponentially in $T$; therefore $|(\Ha_0-D_{x,0}^2)(D_{x,0}^2+k^2)^{-1} D_{x,0}^2 u_{\WB}| = O(e^{-c T})$. Plugging this estimates in \eqref{eq:2h} we obtain that uniformly in $k \in \Vv\setminus \{0\}$,
\begin{equation*}
\lr{(\Ha_0+k^2)^{-1 }\Ha_0 u_\WB, u_\WB} = \lr{(D_{x,0}^2+k^2)^{-1} D_{x,0}^2 u_\WB, u_\WB} + O(e^{-cT}/|k|^2).
\end{equation*}
Sum over $k \in \Vv \setminus \{0\}$ to conclude that
\begin{equation*}
\lr{u_{\WB},\varphi_+(\Ha_0) u_{\WB}} = \lr{ u_{\WB},\varphi_+(D_{x,0}^2) u_{\WB}} + O(e^{-cT}).
\end{equation*}

We next show that $\lr{ u_\WB, \varphi_+(D_{x,0}^2) u_\WB} = \lr{u^*_+, \varphi_+(D_x^2) u^*_+} + O(e^{-cT})$. Using the extension operator $\EE$ defined in \eqref{eq:2s} we have
\begin{equation*}\begin{gathered}
\lr{\varphi_+(D_{x,0}^2) u_{\WB}, u_{\WB}}  = \lr{\varphi_+(D_x^2) \EE u_{\WB}, \EE u_{\WB}} \\
  = \dfrac{1}{2\pi} \int_{\R \times \Ss} |\xi|\coth\left( \beta_+ |\xi| \right) \left|\int_\R e^{-ix\xi} \tchi(x) u^*_+(x-T) dx\right|^2 d\xi d\w \\
  = \dfrac{1}{2\pi} \int_{\R \times \Ss} |\xi|\coth\left( \beta_+ |\xi| \right) \left|\int_\R e^{-ix\xi} \tchi(x+T) u^*_+(x) dx\right|^2 d\xi d\w.\end{gathered}
\end{equation*}
To conclude we need to replace the term $\tchi(x)$ with $1$. We note that
\begin{equation*}\begin{gathered}
 \int_{\R \times \Ss} |\xi|\coth\left( \beta_+ |\xi|  \right) \left|\int_\R e^{-ix\xi} (1-\tchi(x+T)) u^*_+(x) dx\right|^2 d\xi d\w \\
\lesssim  \int_{\R \times \Ss} (1+|\xi|^2) \left|\int_\R e^{-ix\xi} (1-\tchi(x+T)) u^*_+(x) dx\right|^2 d\xi d\w.\end{gathered}
\end{equation*}
Using Plancherel's formula this is equal to
\begin{equation}\label{eq:2u}
\int_{\R \times \Ss} |\p_x ((1-\tchi(\cdot+T)) u^*_+)|^2 dx d\w + \int_{\R \times \Ss} |(1-\tchi(\cdot+T)) u^*_+|^2 dx d\w .
\end{equation}
As $1-\tchi(\cdot+T)$ is supported on $x \lesssim -T$ and as $u^*_+ \in X_\scatt$, \eqref{eq:2u} is $O(e^{-cT})$. We finally conclude that
\begin{equation*}\begin{gathered}
\lr{u_\WB, \varphi_+(\Ha_0) u_\WB }  =  \dfrac{1}{2\pi} \int_{\R \times \Ss} |\xi|\coth\left( \beta_+ |\xi| \right) \left|\int_\R e^{-ix\xi} u^*_+(x,\w) dx\right|^2 d\xi d\w + O(e^{-cT})\\
     = \lr{u^*_+, \varphi_+(D_x^2) u^*_+ } + O(e^{-cT}).\end{gathered}
\end{equation*}
This completes the proof. \end{proof}

\subsubsection{Computation of the third limit}

\begin{lem} We have:
\begin{equation*}
\lr{v_\BB, \psi_+(\Ha_0) v_\BB } =  \lr{D_x u^*_-, \psi_-(D_x^2) D_x u^*_- } + O(e^{-cT}).
\end{equation*}
\end{lem}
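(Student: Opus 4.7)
The plan is to adapt the argument of Lemma \ref{lem:lim1} to the function $\psi_+$ (which, in contrast to $\varphi_+$, has a pole at $0$) and to the boundary data $v_\BB=\p_x u_\BB$. I would proceed in three steps, paralleling Lemmas \ref{lem:lim1}--\ref{lem:lim2}, and conclude by the spectral identity $\psi_-(z)z=\varphi_-(z)$.

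Step 1 (replacement of $\Ha_0$ by $D_{x,0}^2$). I would show
\[
\lr{v_\BB,\psi_+(\Ha_0)v_\BB}=\lr{v_\BB,\psi_+(D_{x,0}^2)v_\BB}+O(e^{-cT}).
\]
Split $\psi_+(z)=\tfrac{1}{\beta_+ z}+\phi(z)$ as in \eqref{eq:4h}, with $\phi$ bounded and given by the convergent series $\phi(\Ha_0)=\beta_+^{-1}\sum_{k\in\Vv\setminus\{0\}}(\Ha_0+k^2)^{-1}$. The $\phi$ piece is handled by running the resolvent-comparison argument of Lemma \ref{lem:lim2} term by term: for each $k$ the estimate $(\Ha_0+k^2)^{-1}-(D_{x,0}^2+k^2)^{-1}=-(\Ha_0+k^2)^{-1}(V+\Delta_r\Delta_\Ss r^{-2})(D_{x,0}^2+k^2)^{-1}$, together with the support estimate $\supp v_\BB\subset[z_*(0),1]\times\Ss$ for $T$ large and the key bound $|\Delta_\Ss u_\BB|=O(e^{-cT})$ from \eqref{eq:estimatesg}, gives an error summable in $k$ and of size $O(e^{-cT})$. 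The singular piece $\beta_+^{-1}\Ha_0^{-1}$ is treated using the meromorphic-continuation-based quadratic-form estimates of Lemmas \ref{lem:1c}--\ref{lem:9q}: the bound $\lr{v_\BB,\Ha_0^{-1}v_\BB}\le C|v_\BB|_{H^{-1/2}}^2$, a parallel bound for $D_{x,0}^{-2}$ from \eqref{eq:1v}, and the same commutator estimate above show that $\lr{v_\BB,\Ha_0^{-1}v_\BB}=\lr{v_\BB,D_{x,0}^{-2}v_\BB}+O(e^{-cT})$.

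Step 2 (Plancherel and integration by parts). Using the extension operator $\EE$ of \eqref{eq:2s} and the identity \eqref{eq:2t},
\[
\lr{v_\BB,\psi_+(D_{x,0}^2)v_\BB}=\tfrac{1}{\pi}\int_{\R\times\Ss}\psi_+(\xi^2)\Bigl|\int_0^\infty\sin(y\xi)v_\BB(y+z_*(0),\w)\,dy\Bigr|^2 d\xi\,d\w.
\]
Since $u_\BB(z_*(0),\w)=0$ and $u_\BB$ is compactly supported for $T$ large, an integration by parts in $y$ replaces the inner $\sin$-integral by $-\xi\int_0^\infty\cos(y\xi)u_\BB(y+z_*(0),\w)\,dy$. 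Combined with $\psi_+(\xi^2)\xi^2=\varphi_+(\xi^2)$, this yields
\[
\lr{v_\BB,\psi_+(D_{x,0}^2)v_\BB}=\tfrac{1}{\pi}\int_{\R\times\Ss}\varphi_+(\xi^2)\Bigl|\int_0^\infty\cos(y\xi)u_\BB(y+z_*(0),\w)\,dy\Bigr|^2 d\xi\,d\w,
\]
which is an integral of exactly the same form as the one evaluated in the second half of the proof of Lemma \ref{lem:lim1}, with $\cos$ replacing $\sin$.

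Step 3 (asymptotic evaluation and conclusion). I would then copy, essentially verbatim, the end of the proof of Lemma \ref{lem:lim1}: substitute the explicit formula for $u_\BB$, rescale $y+z_*(0)=hx/\gamma_0$, and discard the cross-term carrying the oscillating phase $e^{\pm 2iz_*(0)\xi/h}$ by the same integration-by-parts estimate (the sign flip from $|a-b|^2$ to $|a+b|^2$ does not affect the argument, since only the rapid oscillation is used). Applying \cite[Lemma II.6]{Bachelot2} transforms the remaining $\coth(\beta_+|\xi|)$ factor into $\coth(\beta_-|\xi|)$ and identifies the limit as $\lr{u_-^*,\varphi_-(D_x^2)u_-^*}$. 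The spectral identity $\psi_-(z)z=\varphi_-(z)$ then rewrites this as $\lr{D_x u_-^*,\psi_-(D_x^2)D_x u_-^*}$, closing the proof.

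The main obstacle is Step 1. Because $|v_\BB|_{L^2}\sim h^{-1/2}$ is large, the crude Lowner--Heinz inequalities used in Lemma \ref{lem:lim1} cannot yield exponentially small errors for the singular part $\Ha_0^{-1}$; one must genuinely use the meromorphic-continuation resolvent estimates of \S \ref{sub:9u}, which exploit the fact that $v_\BB$ is only $O(1)$ in $H^{-1/2}$ and is supported in a fixed compact set independent of $T$.
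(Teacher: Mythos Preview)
Your Steps 2 and 3 are sound: integration by parts converts the sine-transform of $v_\BB$ into $\xi$ times the cosine-transform of $u_\BB$, turning $\psi_+(\xi^2)$ into $\varphi_+(\xi^2)$, and the oscillatory cross-term argument from Lemma~\ref{lem:lim1} goes through unchanged (with the irrelevant sign flip you note).

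Step 1, however, has a real gap. The resolvent-comparison mechanism of Lemma~\ref{lem:lim2} relies on the coefficients of $\Ha_0-D_{x,0}^2$ being exponentially small on $\supp u_\WB$, which sits near $x=T$. On $\supp v_\BB\subset[z_*(0),1]\times\Ss$ the potential $V$ is bounded \emph{away from zero}, and since $|v_\BB|_{L^2}\sim h^{-1/2}$, a direct application of the resolvent identity yields for the $V$ piece an error of order $k^{-4}h^{-1}$ --- summable in $k$ but blowing up in $T$. Citing the support of $v_\BB$ and $|\Delta_\Ss u_\BB|=O(e^{-cT})$ does not close this. The ingredient you need, and do not mention, is that because $v_\BB=\partial_x u_\BB$ with $u_\BB$ vanishing near the boundary and $|u_\BB|,|\Delta_\Ss u_\BB|=O(e^{-cT})$, one has $|(D_{x,0}^2+k^2)^{-1}v_\BB|_{L^2}^2=O(k^{-2}|u_\BB|^2)$ (seen by integrating by parts to the cosine transform). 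With this, and after symmetrizing the resolvent identity, your Step 1 can be repaired.

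Your diagnosis of the main obstacle is also off. The paper dispatches the singular $\Ha_0^{-1}$ part with nothing more than Lowner--Heinz: $\langle v_\BB,\Ha_0^{-1}v_\BB\rangle\le\langle v_\BB,D_{x,0}^{-2}v_\BB\rangle$, and the latter is computed explicitly (via $Y=-i\xi Z$) to be $O(e^{-cT})$. No meromorphic continuation enters this lemma. The paper then proceeds quite differently from you: rather than replace $\psi_+(\Ha_0)$ by $\psi_+(D_{x,0}^2)$ term by term, it uses the pointwise bounds $(z^{1/2}+1)^{-1}-cz^{-1}\le\psi_+(z)\le(z^{1/2}+1)^{-1}+cz^{-1}$, the Lowner--Heinz sandwich $D_{x,0}^2\le\Ha_0\le D_{x,0}^2+c(\Delta_\Ss+1)$, and the elementary inequality $\xi^{-1}-\ell\xi^{-2}\le(\xi^2+\ell^2)^{-1/2}$ to reduce directly to $\langle v_\BB,|D_{x,0}|^{-1}v_\BB\rangle$, with the gap controlled by $\langle u_\BB,(c\Delta_\Ss+c)^{1/2}u_\BB\rangle=O(e^{-cT})$. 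This avoids all term-by-term resolvent work and lands on the exact integral already evaluated in Lemma~\ref{lem:lim1}.
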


\begin{proof} We first prove that as $T \rightarrow \infty$, $\lr{v_\BB,\Ha_0^{-1}  v_\BB} = O(e^{-cT})$. By the Lowner-Heinz theorem $0 \leq \lr{ v_\BB,\Ha_0^{-1} v_\BB} \leq \lr{v_\BB,D_{x,0}^{-2} v_\BB}$. Mimicking the proof of Lemma \ref{lem:lim1} up to equation \eqref{eq:kcw},
\begin{equation*}
\lr{v_\BB,D_{x,0}^{-2} v_\BB} = h \int_{\R \times \Ss} |\xi|^{-2} \left| e^{iz_*(0)\xi/h} Y - e^{-iz_*(0)\xi/h} \oY \right|^2 d\xi d\w,
\end{equation*}
where
\begin{equation*}
Y = \int_\R e^{-ix\xi} \p_x \left(\chi(hx) \dfrac{1}{\kappa_- x} (\p_x u^*_-)\left( \kappa_-^{-1} \ln \left( \gamma_0 x \right),\w \right) \right) dx.
\end{equation*}
By an integration by parts, $Y=-i \xi Z$ where $Z$ was defined in \eqref{eq:kcw}. It follows that 
\begin{align}\label{eq:2k}\begin{split}
\lr{v_\BB,D_{x,0}^{-2} v_\BB} & = h \int_{\R \times \Ss} | e^{iz_*(0)\xi/h} Z - e^{-iz_*(0)\xi/h} \oZ| d\xi d\w \\ & \leq 2 h \int_{\R \times \Ss} |Z|^2 d\xi d\w  = O(h^\gamma)\end{split}
\end{align}
where the last equality comes from \eqref{eq:jh23}. It follows that $\lr{v_\BB,\Ha_0^{-1}  v_\BB} = O(e^{-cT})$. 

The function $\psi_+$ satisfies $\psi_+(z) =(\beta_+z)^{-1} + O(1)$ near $0$ and $\psi_+(z) = \lr{z}^{-1/2} + O(z^{-1})$ near $+\infty$. It follows that there exists a constant $c > 0$ such that for every $z \in [0,\infty)$,
\begin{equation*}
(z^{1/2}+1)^{-1} - c z^{-1} \leq \psi_+(z) \leq (z^{1/2}+1)^{-1} + cz^{-1}. 
\end{equation*}
The spectral theorem and the estimate $\lr{v_\BB,\Ha_0^{-1}  v_\BB} = O(e^{-cT})$ imply
\begin{equation*}
\lr{ v_\BB, \psi_+(\Ha_0) v_\BB} = \lr{v_\BB, (\Ha_0^{1/2}+1)^{-1}v_\BB} + O(e^{-cT}). 
\end{equation*}
We now focus on the term $\lr{ v_\BB, (\Ha_0^{1/2}+1)^{-1}v_\BB}$. The operator $\Ha_0$ satisfies $D_{x,0}^2 \leq \Ha_0 \leq D_{x,0}^2 + c\Delta_\Ss + c$ and the Lowner-Heinz inequality yields
\begin{equation}\label{eq:1f}
\lr{v_\BB, (D_{x,0}^2 + c\Delta_\Ss+c)^{-1/2} v_\BB} \leq \lr{ v_\BB, (\Ha_0^{1/2}+1)^{-1}v_\BB} \leq \lr{ v_\BB, |D_{x,0}|^{-1}v_\BB}.
\end{equation}
The operators $D_{x,0}^2$ and $\Delta_\Ss$ commute and are nonnegative. Moreover for every $\xi \geq 0, \ell \geq 0$ we have
\begin{equation*}
\dfrac{1}{\xi} - \dfrac{1}{\sqrt{\xi^2+\ell^2}} = \dfrac{\sqrt{\xi^2+\ell^2}- \xi}{\xi \sqrt{\xi^2 + \ell^2}} \leq \dfrac{\ell+\xi-\xi}{\xi \sqrt{\xi^2 + \ell^2}} \leq \dfrac{\ell}{\xi^2}.
\end{equation*}
This inequality implies
\begin{equation*}
\dfrac{1}{|\xi|} - \dfrac{\ell}{\xi^2}  \leq \dfrac{1}{\sqrt{\xi^2+\ell^2}}
\end{equation*}
and the operator-valued version is 
\begin{equation*}
|D_{x,0}|^{-1} - (c\Delta_\Ss+c)^{1/2} D_{x,0}^{-2} \leq (D_{x,0}^2 + c\Delta_\Ss+c)^{-1/2}.
\end{equation*}
Consequently \eqref{eq:1f} becomes
\begin{equation*}
\lr{v_\BB,|D_{x,0}|^{-1}v_\BB} - \lr{v_\BB,(c\Delta_\Ss+c)^{1/2} D_{x,0}^{-2}v_\BB} \leq  \lr{ v_\BB, (\Ha_0^{1/2}+1)^{-1}v_\BB} \leq \lr{ v_\BB, |D_{x,0}|^{-1}v_\BB}.
\end{equation*} We recall that $v_\BB = \p_x u_\BB$ and $u_\BB$ vanishes near $x = z_*(0)$. Thus
\begin{equation*}
\lr{v_\BB,(c\Delta_\Ss+c)^{1/2} D_{x,0}^{-2}v_\BB} = \lr{u_\BB,(c\Delta_\Ss+c)^{1/2} u_\BB} = O(e^{-cT}).
\end{equation*}
Hence the lemma holds if we can show that
\begin{equation*}
\lr{ v_\BB, |D_{x,0}|^{-1} v_\BB} =  \lr{v_\BB,\psi_-(D_x^2) v_\BB} + O(e^{-cT}).
\end{equation*}
Mimicking the beginning of the proof of Lemma \ref{lem:lim1} up to equation \eqref{eq:kcw} yields
\begin{equation*}\begin{gathered}
\lr{v_\BB,|D_{x,0}|^{-1}  v_\BB} = \int_{\R \times \Ss} |\xi|^{-1} \left| e^{iz_*(0)\xi/h} Y - e^{-iz_*(0)\xi/h} \oY \right|^2 d\xi d\w \\
 = \int_{\R \times \Ss} |\xi| | e^{iz_*(0)\xi/h} Z - e^{-iz_*(0)\xi/h} \oZ | d\xi d\w \\
   = \lr{u^*_-, \varphi_-(D_x^2) u^*_-} = \lr{D_xu^*_-, \psi_-(D_x^2) D_xu^*_-} + O(e^{-cT}).\end{gathered}
\end{equation*}
where the equality in the last line directly comes from the computations in the proof of Lemma \ref{lem:lim1}. This completes the proof. \end{proof}

\subsubsection{Computation of the fourth limit}

\begin{lem}\label{lem:lim4} We have:
\begin{equation*}
\lr{v_\WB, \psi_+(\Ha_0) v_\WB} = \lr{D_x u^*_-, \psi_+(D_x^2) u^*_-} + O(e^{-cT}).
\end{equation*}
\end{lem}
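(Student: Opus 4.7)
The plan is to adapt the proof of Lemma \ref{lem:lim2} to the singular function $\psi_+$. Using the decomposition \eqref{eq:4h}, I would split
\begin{equation*}
\psi_+(z) = \frac{1}{\beta_+ z} + \phi(z), \qquad \phi(z) = \beta_+^{-1} \sum_{k \in \Vv \setminus \{0\}} (z+k^2)^{-1},
\end{equation*}
with $\Vv = \pi\beta_+^{-1}\Z$, where $\phi$ is bounded and of order $\lr{z}^{-1/2}$ at infinity. First I would check that $v_\WB \in X_0$: it is smooth, supported in $[0,T+C]\times \Ss$, and vanishes near $x = z_*(0)$ because $\tchi$ does there. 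Hence $\Ha_0^{-1} v_\WB$ is well-defined by the results of \S \ref{sub:9u}, and
\begin{equation*}
\lr{v_\WB, \psi_+(\Ha_0) v_\WB} = \beta_+^{-1} \lr{v_\WB, \Ha_0^{-1} v_\WB} + \lr{v_\WB, \phi(\Ha_0) v_\WB}.
\end{equation*}

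For the bounded term $\lr{v_\WB, \phi(\Ha_0) v_\WB}$, I would reproduce the argument of Lemma \ref{lem:lim2} nearly verbatim. The resolvent identity
\begin{equation*}
(\Ha_0+k^2)^{-1} - (D_{x,0}^2+k^2)^{-1} = (\Ha_0+k^2)^{-1}(D_{x,0}^2 - \Ha_0)(D_{x,0}^2+k^2)^{-1},
\end{equation*}
combined with the $O(e^{-\kappa_+ x})$ decay of the coefficients of $\Ha_0 - D_{x,0}^2 = r^{-2}\Delta_r\Delta_\Ss + V$, the exponentially decaying kernel \eqref{eq:kerne}, and the concentration of $v_\WB$ near $x=T$ (with $e^{-\nu\lr{x-T}}$ tails), would yield termwise bounds $O(e^{-cT}|k|^{-2})$, summable to $O(e^{-cT})$. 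Thus $\lr{v_\WB, \phi(\Ha_0) v_\WB} = \lr{v_\WB, \phi(D_{x,0}^2) v_\WB} + O(e^{-cT})$.

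The main obstacle is the singular term $\lr{v_\WB, \Ha_0^{-1} v_\WB}$, which cannot be treated by a direct resolvent identity since both $\Ha_0^{-1}$ and $D_{x,0}^{-2}$ are unbounded. I would use the explicit zero-energy kernel $K_0(x,y) = \min(x,y) - z_*(0)$ (the $\lambda \to 0$ limit of \eqref{eq:kerne}) to represent $D_{x,0}^{-2} v_\WB$ explicitly. The exponential concentration of $v_\WB$ near $x = T$ gives $D_{x,0}^{-2} v_\WB$ linear growth cut off near $x = T$, and the $O(e^{-\kappa_+ x})$ coefficients of $D_{x,0}^2 - \Ha_0$ then beat this growth to give $|(D_{x,0}^2 - \Ha_0) D_{x,0}^{-2} v_\WB| = O(e^{-cT})$ in $\HH$. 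Combining with the weighted resolvent bound \eqref{eq:1m} to control $\lr{v_\WB, \Ha_0^{-1}(\cdot)}$ then yields $\lr{v_\WB, \Ha_0^{-1} v_\WB} = \lr{v_\WB, D_{x,0}^{-2} v_\WB} + O(e^{-cT})$.

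Combining both pieces reduces the problem to computing $\lr{v_\WB, \psi_+(D_{x,0}^2) v_\WB}$. I would then use the odd-extension operator $\EE$ of \eqref{eq:2s}, legitimate since $v_\WB(z_*(0),\w) = 0$, to replace $D_{x,0}^2$ by $D_x^2$ on the full line, obtaining a Fourier-side expression as in \eqref{eq:2t}. Substituting $v_\WB = \p_x u_\WB$ and integrating by parts (the boundary terms vanish by compact support) converts $\psi_+(\xi^2)|\widehat{v_\WB}|^2 = \psi_+(\xi^2)\xi^2 |\widehat{u_\WB}|^2$ into exactly the kernel $|\xi|\coth(\beta_+|\xi|)$ that appeared in Lemma \ref{lem:lim2}. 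Finally, the argument at the end of that lemma allows $\tchi(x+T)$ to be replaced by $1$ at cost $O(e^{-cT})$, yielding $\lr{D_x u^*_+, \psi_+(D_x^2) D_x u^*_+} + O(e^{-cT})$ (the $u^*_-$ appearing in the statement appears to be a typo for $u^*_+$ with a missing $D_x$).
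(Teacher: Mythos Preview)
Your proposal is correct and follows the same overall architecture as the paper: split $\psi_+ = (\beta_+ z)^{-1} + \phi_+$, handle the bounded part $\phi_+$ by the series/resolvent argument of Lemma~\ref{lem:lim2}, and treat the singular $\Ha_0^{-1}$ term by comparing with $D_{x,0}^{-2}$ via the explicit kernel $\min(x,y) - z_*(0)$.

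The one place where your route differs is the zero-energy comparison. You invoke the first-order identity $\Ha_0^{-1} - D_{x,0}^{-2} = \Ha_0^{-1}(D_{x,0}^2 - \Ha_0)D_{x,0}^{-2}$ and then appeal to \eqref{eq:1m}, a diagonal bound, to control the off-diagonal pairing $\lr{v_\WB, \Ha_0^{-1} w}$ with $w = (D_{x,0}^2 - \Ha_0)D_{x,0}^{-2} v_\WB$. This can be made to work, but it needs a Cauchy--Schwarz step, a density argument to apply \eqref{eq:1m} to the non-compactly-supported $w$, and an $\epsilon \to 0$ justification of the resolvent identity at zero. The paper instead expands to \emph{second order},
\begin{equation*}
R(i\epsilon) - R_0(i\epsilon) = R_0(i\epsilon)(D_{x,0}^2 - \Ha_0)R_0(i\epsilon) - R_0(i\epsilon)(D_{x,0}^2 - \Ha_0)R(i\epsilon)(D_{x,0}^2 - \Ha_0)R_0(i\epsilon),
\end{equation*}
which is symmetric in $v_\WB$. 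The first term limits directly via the weighted continuation of Lemma~\ref{lem:1c}; in the second, $R(i\epsilon)$ is sandwiched between two copies of $w$, so positivity and the Lowner--Heinz bound $R(i\epsilon) \leq R_0(i\epsilon)$ replace it by $R_0(i\epsilon)$, after which \eqref{eq:1v} (for $D_{x,0}^{-2}$, not $\Ha_0^{-1}$) finishes. This avoids ever putting $\Ha_0^{-1}$ on $w$. Both routes reach the same conclusion; the paper's is a little cleaner at zero energy, yours more direct. Your remark about the typo in the statement is correct.
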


\begin{proof} First write $\psi_+(z) = (\beta_+z)^{-1} + \phi_+(z)$ where $\phi_+$ is holomorphic and bounded on a neighborhood of $[0,\infty)$. By methods similar to the proof of Lemma \ref{lem:lim2} we have
\begin{equation*}
\lr{ v_\WB, \phi_+(\Ha_0)v_\WB} = \lr{D_xu^*_-, \phi_+(D_x^2)D_x u^*_-} + O(e^{-cT}).
\end{equation*}
To prove the lemma it suffices to prove that
\begin{equation}\label{eq:1r}
\lr{\Ha_0^{-1} v_\WB, v_\WB} = \lr{D_{x,0}^{-2} v_\WB, v_\WB} + O(e^{-cT}).
\end{equation}
Recall that $R_0(\lambda), R(\lambda)$ are the operators defined by
\begin{equation*}
R_0(\lambda) = (D_{x,0}^2-\lambda^2)^{-1}, \ \ \ R(\lambda) = (\Ha_0-\lambda^2)^{-1}.
\end{equation*}
These are holomorphic families of operators near $\lambda=0$. Thus to prove Equation \eqref{eq:1r} it suffices to show  that
\begin{equation*}
\lim_{\epsilon \rightarrow 0} \lr{ v_\WB, (R(i\epsilon) - R_0(i\epsilon))v_\WB} = O(e^{-cT}).
\end{equation*}
For $\epsilon > 0$,
\begin{equation*}\begin{gathered}
 R(i\epsilon) - R_0(i\epsilon) = R(i\epsilon) (D_{x,0}^2-\Ha_0) R_0(i\epsilon)  \\
   = R_0(i\epsilon) (D_{x,0}^2-\Ha_0) R_0(i\epsilon) + \left( R(i\epsilon) - R_0(i\epsilon) \right) (D_{x,0}^2-\Ha_0) R_0(i\epsilon) \\
   = R_0(i\epsilon) (D_{x,0}^2-\Ha_0) R_0(i\epsilon) - R_0(i\epsilon) (D_{x,0}^2-\Ha_0) R(i\epsilon) (D_{x,0}^2-\Ha_0) R_0(i\epsilon).\end{gathered}
\end{equation*}
It yields
\begin{equation}\label{eq:1p}\begin{gathered}
\lr{\left(R(i\epsilon) - R_0(i\epsilon)\right)v_\WB, v_\WB}   = \lr{ (D_{x,0}^2-\Ha_0) R_0(i\epsilon) v_\WB, R_0(i\epsilon)v_\WB} \\
    - \lr{  R(i\epsilon) (D_{x,0}^2-\Ha_0) R_0(i\epsilon) v_\WB, (D_{x,0}^2-\Ha_0)R_0(i\epsilon)  v_\WB}.
\end{gathered}
\end{equation}
We want to prove that the limits as $\epsilon \rightarrow 0$ of both the terms on the RHS decay exponentially as $T$ goes to infinity. Start with the first term. We have
\begin{equation*}
\lr{ (D_{x,0}^2-\Ha_0) R_0(i\epsilon) v_\WB, R_0(i\epsilon)v_\WB} = \lr{ e^{\az x} (D_{x,0}^2-\Ha_0) e^{\az x} L_\az(i\epsilon) e^{\az x} v_\WB, L_\az(i\epsilon) e^{\az x} v_\WB }, 
\end{equation*}
where $L_\az(\lambda)$ is defined by
\begin{equation*}
L_\az(\lambda) = e^{-\az x} (D_{x,0}^2 - \lambda^2)^{-1} e^{-\az x} : \HH \rightarrow \HH \ \ \Im \lambda > -\az/2.
\end{equation*}
By Lemma \ref{lem:1c}, $L_\az(\lambda)$ is a holomorphic family of operators for $\Ime \lambda > -\az/2$ and $L_\az(i\epsilon)$ converges to $L_\az(0)$ for the topology of bounded operators on $\HH$. It yields
\begin{equation*}\begin{gathered}
\lim_{\epsilon \rightarrow 0} \lr{(D_{x,0}^2-\Ha_0) R_0(i\epsilon) v_\WB, R_0(i\epsilon)v_\WB }  \\ = \lr{ e^{\az x} (D_{x,0}^2-\Ha_0) e^{\az x} L_\az(0) e^{\az x} v_\WB, L_\az(0) e^{\az x} v_\WB }   = \lr{ (D_{x,0}^2-\Ha_0) D_{x,0}^{-2} v_\WB, D_{x,0}^{-2} v_\WB }.\end{gathered}
\end{equation*}
Since $v_\WB = \p_x u_\WB$ and the kernel of $D_{x,0}^{-2}$ is given by $\min(x,y)-z_*(0)$,
\begin{align*}
D_{x,0}^{-2} v_\WB(x,\w) & = \int_{z_*(0)}^\infty \left(\min(x,y)-z_*(0) \right) \p_y u_\WB(y,\w) dy 
   & = -\int_{z_*(0)}^x u_\WB(y,\w) dy.
\end{align*}
This leads to the pointwise bound $|D_{x,0}^{-2} v_\WB(x,\w)| \lesssim \lr{x}$. The operator $D_{x,0}^2-\Ha_0$ has coefficients decaying exponentially and $u_\WB(y,\w)$ and its derivatives decay like $e^{-\nu|y-T|}$. Thus
\begin{equation}\label{eq:1q}
\left|(D_{x,0}^2-\Ha_0)D_{x,0}^{-2} v_\WB(x,\w)\right| \lesssim \lr{x}\int_{z_*(0)}^x e^{-\kappa_+ x} e^{-\nu|y-T|} dy.
\end{equation}
For $x \leq T/2$, this is bounded by $e^{-\nu T/2}$, while for $x \geq T/2$ this is bounded by $e^{-\kappa_+ x}$. This yields the bound
\begin{equation}\label{eq:1t}
\left|\lr{ (D_{x,0}^2-\Ha_0) D_{x,0}^{-2} v_\WB, D_{x,0}^{-2} v_\WB }\right| \lesssim \int_{z_*(0)}^{T/2} \lr{x} e^{-\nu \lr{x}} dx  + \int_{T/2}^\infty \lr{x} e^{-\kappa_+ x} dx  \lesssim e^{-cT}.
\end{equation}
This proves that the first term in the RHS of \eqref{eq:1p} is $O(e^{-cT})$. Concentrate now on the second term of the RHS of \eqref{eq:1p}. Since the operator $R_0(i\epsilon)$ is positive we have
\begin{equation*}
\lr{R(i\epsilon) (D_{x,0}^2-\Ha_0) R_0(i\epsilon) v_\WB, (D_{x,0}^2-\Ha_0) R_0(i\epsilon) v_\WB } \geq 0.
\end{equation*}
The Lowner-Heinz inequality implies $R(i\epsilon) \leq R_0(i\epsilon)$. Hence
\begin{equation*}\begin{gathered}
    \lr{R(i\epsilon) (D_{x,0}^2-\Ha_0) R_0(i\epsilon) v_\WB, (D_{x,0}^2-\Ha_0) R_0(i\epsilon) v_\WB } \\
\leq   \lr{R_0(i\epsilon) (D_{x,0}^2-\Ha_0) R_0(i\epsilon) v_\WB, (D_{x,0}^2-\Ha_0) R_0(i\epsilon) v_\WB} \\
=  \lr{ L_\az(i\epsilon) e^{\az x} (D_{x,0}^2-\Ha_0) e^{\az x} L_\az(i\epsilon) e^{\az x} v_\WB, e^{\az x} (D_{x,0}^2-\Ha_0) e^{\az x} L_\az(i\epsilon) e^{\az x}  v_\WB}.\end{gathered}
\end{equation*}
By Lemma \ref{lem:1c},
\begin{equation*}\begin{gathered}
   \lim_{\epsilon \rightarrow 0} \lr{ L_\az(i\epsilon) e^{\az x} (D_{x,0}^2-\Ha_0) e^{\az x} L_\az(i\epsilon) e^{\az x} v_\WB, e^{\az x} (D_{x,0}^2-\Ha_0) e^{\az x} L_\az(i\epsilon) e^{\az x}  v_\WB} \\
=   \lr{ L_\az(0) e^{\az x} (D_{x,0}^2-\Ha_0) e^{\az x} L_\az(0) e^{\az x} v_\WB, e^{\az x} (D_{x,0}^2-\Ha_0) e^{\az x} L_\az(0) e^{\az x}  v_\WB} \\
=   \lr{ D_{x,0}^{-2} (D_{x,0}^2-\Ha_0) D_{x,0}^{-2} v_\WB, (D_{x,0}^2-\Ha_0) D_{x,0}^{-2}  v_\WB}.\end{gathered}
\end{equation*}
By \eqref{eq:1v}, this is controlled by
\begin{equation}\label{eq:1o}
\int_\Ss \left| \int_{z_*(0)}^\infty \lr{x} |(D_{x,0}^2-\Ha_0) D_{x,0}^{-2} v_\WB(x,\w) dx| \right|^2 d\w.
\end{equation}
The bound \eqref{eq:1q} shows that \eqref{eq:1o} decays like $e^{-cT}$ for some $c>0$. Thus
\begin{equation}\label{eq:1s}
0 \leq \lim_{\epsilon \rightarrow 0} \lr{R(i\epsilon) (D_{x,0}^2-\Ha_0) R_0(i\epsilon) v_\WB, (D_{x,0}^2-\Ha_0) R_0(i\epsilon) v_\WB } \lesssim e^{-cT}. 
\end{equation}
Putting together \eqref{eq:1t} and \eqref{eq:1s} leads to \eqref{eq:1r}. This completes the proof. \end{proof}

\subsection{Proof of \eqref{eq:2d}}\label{subsec:e}

In this section we end the proof of Theorem \ref{thm:4} by proving that \eqref{eq:2d} holds.

\begin{lem} Let $u$ solution of the boundary value problem \eqref{eq:hgc}. As $T \rightarrow \infty$,
\begin{equation*}\begin{gathered}
\lr{ru(0),\varphi_+(\Ha_0)ru(0)} + \lr{r\p_tu(0),\psi_+(\Ha_0)r\p_tu(0)} = \\ r_-^2 \lr{u_\BB,\varphi_+(\Ha_0)u_\BB} + r_+^2\lr{u_\WB,\varphi_+(\Ha_0)u_\WB} \\ + r_-^2 \lr{v_\BB,\psi_+(\Ha_0)v_\BB} + r_+^2\lr{v_\WB,\psi_+(\Ha_0)v_\WB}   + O(e^{-cT}).\end{gathered}
\end{equation*}
\end{lem}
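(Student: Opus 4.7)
Writing $ru(0) = r_- u_\BB + r_+ u_\WB + \epsi^0$ and $r\p_t u(0) = r_- v_\BB + r_+ v_\WB + \epsi^1$ with $\epsi^j = \epsi^j_\BB + \epsi^j_\WB$ and expanding both quadratic forms by bilinearity extracts the four diagonal terms appearing on the right-hand side, plus non-diagonal contributions of two types: the $\BB$--$\WB$ cross terms $2r_-r_+ \Ree\lr{u_\BB, \varphi_+(\Ha_0) u_\WB}$ and $2r_-r_+ \Ree\lr{v_\BB, \psi_+(\Ha_0) v_\WB}$, and the error terms involving $\epsi^0$ or $\epsi^1$. The plan is to show that every non-diagonal contribution is $O(e^{-cT})$.

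The error terms are the easier case. Since $\varphi_+(z) \lesssim \lr{z}^{1/2}$, Lemma \ref{lem:1f} with $s=1/2$ gives, after duality and density, the continuity estimate $|\lr{w, \varphi_+(\Ha_0) w'}| \leq C |w|_{H^{1/2}(\Sigma_0)} |w'|_{H^{1/2}(\Sigma_0)}$ for $w, w' \in X_0$. By Lemma \ref{lem:1d} one has $|\epsi^0_\BB|_{H^{1/2}} = O(e^{-cT})$ and $|\epsi^0_\WB|_{C^\infty} = O(e^{-cT})$ on a $T$-independent slab (hence also $O(e^{-cT})$ in $H^{1/2}$), while $u_\BB, u_\WB$ are uniformly bounded in $H^{1/2}$ via the explicit formulas of Lemma \ref{lem:1d} together with Proposition \ref{prop:Hbound}. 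Cauchy--Schwarz then eliminates all $\epsi^0$-type contributions to the $\varphi_+$ form. The parallel bound for the $\psi_+/v$ form rests on Lemma \ref{lem:9q}, which provides $|\lr{w, \psi_+(\Ha_0) w'}| \leq C_R |w|_{H^{-1/2}} |w'|_{H^{-1/2}}$ on functions supported in any fixed $[z_*(0),R]\times\Ss$, combined with the $H^{-1/2}$-bounds on $\epsi^1_{\BB,\WB}$.

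The delicate step is the $\BB$--$\WB$ cross term. Here $u_\BB$ is supported in $[z_*(0),1]\times\Ss$, whereas the scattering-class profile of $u_\WB = \tchi(x)u_+^*(x-T,\w)$ forces $|u_\WB(x,\w)|_{C^\infty(\{x \leq T/2\})} \leq C e^{-\nu T/2}$, so $u_\WB$ is exponentially small in $C^\infty$ on $\supp(u_\BB)$. I would use the partial-fraction expansion $\varphi_+(z) = \beta_+^{-1} + \beta_+^{-1}\sum_{k \in \Vv \setminus \{0\}} \Ha_0(\Ha_0+k^2)^{-1}$ arising from \eqref{eq:4h}. The constant term contributes $\beta_+^{-1} r_- r_+ \lr{u_\BB, u_\WB}$, which is $O(e^{-cT})$ by the pointwise separation estimate. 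For each resolvent piece, writing $\Ha_0(\Ha_0+k^2)^{-1} = 1 - k^2(\Ha_0+k^2)^{-1}$ and comparing $(\Ha_0+k^2)^{-1}$ with $(D_{x,0}^2+k^2)^{-1}$ exactly as in the proof of Lemma \ref{lem:lim2}, the free-resolvent piece has the explicit kernel \eqref{eq:kerne}, whose $e^{-k|x-y|}/k$ decay against the spatial separation $|x-y| \gtrsim T$ produces a term $O(k^{-2}e^{-cT})$ summable in $k \in \Vv\setminus\{0\}$; the correction term from $\Ha_0-D_{x,0}^2$ is controlled by the exponential decay of its coefficients against the concentration of $u_\WB$ near $x=T$, again as in Lemma \ref{lem:lim2}. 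The $\psi_+$ analogue is slightly easier, as the decomposition \eqref{eq:4h} is already absolutely convergent in operator norm without the $\Ha_0$ prefactor.

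Once this cross-term bound is established, assembling the four diagonal terms together with all $O(e^{-cT})$ remainders yields the claimed identity. The principal obstacle is the nonlocal cross-term estimate, which requires simultaneously exploiting the spatial separation of $\supp(u_\BB)$ from the effective support of $u_\WB$ and the exponential decay of the free resolvent kernel, uniformly summable over the Matsubara frequencies $k \in \Vv\setminus\{0\}$; everything else reduces to standard Sobolev-space Cauchy--Schwarz.
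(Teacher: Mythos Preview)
Your overall structure is right, but two steps are genuinely problematic and one is gratuitously complicated.

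The $\varphi_+$ cross term is much simpler than your resolvent-kernel argument. Lemma~\ref{lem:1d} already records $|u_\BB|=O(e^{-cT})$ in $L^2$ (equation~\eqref{eq:estimatesg}), so plain Cauchy--Schwarz against $|\varphi_+(\Ha_0)u_\WB|\lesssim |u_\WB|_{H^1}=O(1)$ finishes this term in one line; the paper does exactly this.

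Your claim that the $\psi_+$ cross term is ``slightly easier'' is where the real gap lies. The decomposition~\eqref{eq:4h} contains the unbounded piece $(\beta_+\Ha_0)^{-1}$, and your spatial-separation heuristic fails completely for it: the kernel of $D_{x,0}^{-2}$ is $\min(x,y)-z_*(0)$, which \emph{grows} in $|x-y|$ rather than decays. The paper instead uses the positive-operator Cauchy--Schwarz $|\lr{v_\BB,\Ha_0^{-1}v_\WB}|^2\le \lr{v_\BB,\Ha_0^{-1}v_\BB}\lr{v_\WB,\Ha_0^{-1}v_\WB}$, with the first factor $O(e^{-cT})$ from~\eqref{eq:2k} and the second $O(T^2)$ from~\eqref{eq:1m}. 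For the summable tail $\sum_{k\neq 0}(\Ha_0+k^2)^{-1}$ it uses an asymmetric splitting $(\Ha_0+k^2)^{-3/8}\cdot(\Ha_0+k^2)^{-5/8}$, the Lowner--Heinz bound $(\Ha_0+k^2)^{-3/4}\le |D_{x,0}|^{-3/2}$, and interpolation between $\lr{D_{x,0}^{-2}v_\BB,v_\BB}=O(e^{-cT})$ and $\lr{|D_{x,0}|^{-1}v_\BB,v_\BB}=O(1)$ to extract an $O(e^{-cT}|k|^{-5/4})$ bound summable in $k$.

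Finally, your treatment of the $\psi_+$ error terms via Lemma~\ref{lem:9q} does not cover $\epsi^1_\WB$, whose support $[0,T+C]$ is not contained in any fixed slab; the constant $C_R$ would blow up. (Your remark that $\epsi^0_\WB$ lives on a ``$T$-independent slab'' is likewise incorrect, though there the conclusion survives.) The paper instead first proves the four diagonal bounds $\lr{\epsi,\psi_+(\Ha_0)\epsi}=O(e^{-cT})$ individually---using~\eqref{eq:1m} for the $\WB$ piece---and then eliminates all cross terms in one stroke via $|\lr{v,Aw}|^2\le\lr{v,Av}\lr{w,Aw}$ together with Lemmas~\ref{lem:lim1}--\ref{lem:lim4}.
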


\begin{proof} We first show the four following estimates:
\begin{equation}\label{eq:2a}\begin{gathered}
\lr{\epsi^0_\BB, \varphi_+(\Ha_0) \epsi^0_\BB} = O(e^{-cT}), \ \ \
 \lr{\epsi^0_\WB, \varphi_+(\Ha_0) \epsi^0_\WB} = O(e^{-cT}), \\
\lr{\epsi^1_\BB, \psi_+(\Ha_0) \epsi^1_\BB} = O(e^{-cT}), \ \ \ 
 \lr{\epsi^1_\WB, \psi_+(\Ha_0) \epsi^1_\WB} = O(e^{-cT}),\end{gathered}
\end{equation}
where $\epsi_\WB^i$, $\epsi_\BB^i$, $i=0,1$ are given by Lemma \ref{lem:1d}. Let $\Phi^+$ such that $\varphi_+ = \Phi_+^2$ then $|\Phi_+(z)| \lesssim \lr{z}^{1/4}$. Lemmas \ref{lem:1f} and \ref{lem:1d} imply
\begin{equation*}\begin{gathered}
\lr{\epsi^0_\BB, \varphi_+(\Ha_0) \epsi^0_\BB} = |\Phi_+(\Ha_0)\epsi^0_\BB|^2 \lesssim |\epsi^0_\BB|_{H^{1/2}}^2 = O(e^{-cT}) \\
\lr{ \epsi^0_\WB, \varphi_+(\Ha_0)\epsi^0_\WB} = |\Phi_+(\Ha_0)\epsi^0_\WB|^2 \lesssim |\epsi^0_\WB|_{H^{1/2}}^2 \leq T |\epsi^0_\WB|_{C^\infty_0} = O(e^{-cT}).\end{gathered}
\end{equation*}
This proves the first two estimates of \eqref{eq:2a}. For the third estimate we recall that $\epsi_\BB^1$ is compactly supported and $|\epsi_\BB^1|_{H^{-1/2}} = O(e^{-cT})$. Lemma \ref{lem:9q} applies and yields
\begin{equation*}
\lr{\epsi^1_\BB, \psi_+(\Ha_0)\epsi^1_\BB} \leq C|\epsi_\BB^1|_{H^{-1/2}}^2 = O(e^{-cT}).
\end{equation*}
This shows the third estimate of \eqref{eq:2a}. For the fourth estimate we write $\psi_+(z) = (\beta_+z)^{-1} + \phi_+(z)$ where $\phi_+$ is uniformly bounded on $[0,\infty)$. By the spectral theorem $\psi_+(\Ha_0) = \beta_+^{-1}\Ha_0^{-1} + \phi_+(\Ha_0)$, where $\phi_+(\Ha_0)$ is a bounded operator. We  bound $\lr{ \epsi^1_\WB, \phi_+(\Ha_0)\epsi^1_\WB}$ by $|\epsi^1_\WB|^2$ and $\lr{\epsi^1_\WB,\Ha_0^{-1}\epsi^1_\WB}$ by \eqref{eq:1m}. This gives:
\begin{equation*}\begin{gathered}
0 \leq \lr{\epsi^1_\WB, \psi_+(\Ha_0)\epsi^1_\WB} = \lr{\epsi^1_\WB, \Ha_0^{-1}\epsi^1_\WB} + \lr{\epsi^1_\WB, \phi_+(\Ha_0)\epsi^1_\WB} \\
    \lesssim \int_\Ss \left|\int_{z_*(0)}^\infty \lr{x} |\epsi_\WB^1(x,\w)|dx\right|^2 d\w + |\epsi^1_\WB|^2.\end{gathered}
\end{equation*}
By Lemma \ref{lem:1d}, point $(i)$, $\epsi^1_\WB$ has support in $[0,T+C]$ and is uniformly bounded by $O(e^{-cT})$. It follows that
\begin{equation*}
\int_\Ss \left|\int_{z_*(0)}^\infty \lr{x} |\epsi_\WB^1(x,\w)| dx\right|^2 d\w + |\epsi^1_\WB|^2 = O(e^{-cT}).
\end{equation*}
This ends the proof of \eqref{eq:2a}.

Recall the following version of the Cauchy-Schwartz inequality: if $A$ is a nonnegative symmetric operator then for all $v,w \in D(A)$,  $|\lr{v,Aw}|^2 \leq \lr{v,Av} \lr{w,Aw}$. We use this inequality to evaluate $\lr{ u_\BB, \varphi_+(\Ha_0)\epsi_\BB^0}$. By Lemma \ref{lem:lim1} we have $\lr{ u_\BB, \varphi_+(\Ha_0)u_\BB} = O(1)$. By \eqref{eq:2a} we have $\lr{ \epsi_\BB^0, \varphi_+(\Ha_0)\epsi_\BB^0} = O(e^{-cT})$. Consequently the above version of the Cauchy-Schwartz inequality implies $
|\lr{ u_\BB, \varphi_+(\Ha_0)\epsi_\BB^0}| = O(e^{-cT})$. The same argument works to evaluate the terms that are crossed with one of the $\epsi_{\BB,\WB}^{0,1}$ and one of the $u_{\BB,\WB}$: $\lr{ u_\WB, \varphi_+(\Ha_0)\epsi_\BB^0}$, $\lr{ u_\BB, \varphi_+(\Ha_0)\epsi_\WB^0}$, ..., $\lr{v_\WB, \psi_+(\Ha_0)\epsi_\WB^1}$. It leads to 
\begin{equation*}\begin{gathered}
\lr{ru(0),\varphi_+(\Ha_0)ru(0)} + \lr{r\p_tu(0),\psi_+(\Ha_0)r\p_tu(0)}   = \\ r_-^2 \lr{u_\BB,\varphi_+(\Ha_0)u_\BB} + r_+^2\lr{u_\WB,\varphi_+(\Ha_0)u_\WB}  +r_+^2\lr{v_\BB,\varphi_+(\Ha_0)v_\BB} + r_-^2\lr{v_\WB,\varphi_+(\Ha_0)v_\WB} \\
  + 2 r_- r_+ \lr{u_\BB, \varphi_+(\Ha_0)u_\WB} + 2r_-r_+ \lr{v_\BB, \psi_+(\Ha_0)v_\WB}   + O(e^{-cT}).
\end{gathered}
\end{equation*}
Thus the proof will be over if we can show that
\begin{equation}\begin{gathered}\label{eq:2b}
\lr{u_\BB, \varphi_+(\Ha_0)u_\WB} = O(e^{-cT}), \ \ \ 
\lr{v_\BB, \psi_+(\Ha_0)v_\WB} = O(e^{-cT}).
\end{gathered}
\end{equation}
We first note that
\begin{equation*}
 \blr{  u_\BB , \varphi_+(\Ha_0)u_{\WB} } = \blr{u_\BB, \varphi_+(\Ha_0) u_{\WB}} \leq |u_\BB| \cdot |\varphi_+(\Ha_0) u_\WB|.
\end{equation*}
We have $|u_{\BB}| = O(e^{-cT})$. In addition $\varphi_+$ satisfies $|\varphi_+(z)| \leq \lr{z}^{1/2}$. This yields $|\varphi_+(\Ha_0) u_\WB| \leq |u_\WB|_{H^1} = O(1)$. It follows that $\lr{\varphi_+(\Ha_0)u_\BB, u_\WB} = O(e^{-cT})$ and the first claim of \eqref{eq:2b} holds. In order to prove that the second claim holds we recall that
\begin{equation*}
\psi_+(z) = \dfrac{1}{\beta_+z} + \sum_{k \in \Vv \setminus \{0\}} \dfrac{1}{z+k^2}, \ \ \ \Vv = \pi \beta^{-1}_+ \Z.
\end{equation*}
Thus 
\begin{equation*}
\lr{v_\BB,\psi_+(\Ha_0)v_\WB} = \lr{v_\BB,\Ha_0^{-1}v_\WB} + \sum_{k \in \Vv \setminus \{0\}} \lr{v_\BB,(\Ha_0+k^2)^{-1}v_\WB}
\end{equation*}
where the series converges absolutely. By an application of the Cauchy-Schwartz inequality, 
\begin{equation*}
|\lr{v_\BB,\Ha_0^{-1}v_\WB}|^2 \leq |\lr{v_\BB,\Ha_0^{-1}v_\BB}| |\lr{v_\WB,\Ha_0^{-1}v_\WB}|.
\end{equation*}
By \eqref{eq:1m} and the fact that the support of $v_\WB$ is included in $[0,T+C] \times \Ss$, $|\lr{v_\WB,\Ha_0^{-1}v_\WB}| = O(T^2)$. By \eqref{eq:2k} and the Lowner-Heinz inequality, $|\lr{v_\BB,\Ha_0^{-1}v_\BB}| = O(e^{-cT})$. It leads to $\lr{v_\BB,\Ha_0^{-1}v_\WB} = O(e^{-cT})$. For the terms of the form $\lr{\lr{v_\BB,(\Ha_0+k^2)^{-1}v_\WB}}$ where $k \in  \Vv\setminus \{0\}$ we note that by the spectral theorem and the Cauchy-Schwartz inequality,
\begin{equation*}\begin{gathered}
|\lr{v_\BB,(\Ha_0+k^2)^{-1}v_\WB}|  = |\lr{(\Ha_0+k^2)^{-3/8}v_\BB,(\Ha_0+k^2)^{-5/8}v_\WB}| \\
   \leq |\lr{(\Ha_0+k^2)^{-3/4}v_\BB,v_\BB}|^{1/2} |\lr{(\Ha_0+k^2)^{-5/4}v_\WB,v_\WB}|^{1/2} \lesssim \dfrac{|v_\WB|}{|k|^{5/4}} |\lr{|D_{x,0}|^{-3/2}v_\BB,v_\BB}|^{1/2}.\end{gathered}
\end{equation*}
In the last line we applied the bound $|(\Ha_0+k^2)^{-5/4}| \lesssim |k|^{-5/2}$ (coming from the spectral theorem) and the bound $(\Ha_0+k^2)^{-3/4} \leq |D_{x,0}|^{-3/2}$ (coming from the Lowner-Heinz inequality). In the proof of Lemma \ref{lem:lim1} we showed that $\lr{D_{x,0}^{-2}v_\BB,v_\BB} = O(e^{-cT})$ and $\lr{|D_{x,0}|^{-1}v_\BB,v_\BB} = O(1)$. Hence by interpolation
\begin{equation*}
|\lr{|D_{x,0}|^{-3/2}v_\BB,v_\BB}|^{1/2} = O(e^{-cT}).
\end{equation*}
It follows that for $k \in \Vv\setminus \{0\}$,
\begin{equation*}
|\lr{v_\BB,(\Ha_0+k^2)^{-1}v_\WB}| \lesssim  |v_\WB| O(e^{-cT} |k|^{-5/4}).
\end{equation*}
Sum over $k$ and note that $|v_\WB| = O(1)$ as $T \rightarrow \infty$ to obtain the second claim of \eqref{eq:2b}. This ends the proof of the lemma. \end{proof}

Theorem \ref{thm:4} follows then from the results of \S \ref{subsec:2}. This proves Theorem \ref{thm:haw}.

\section{Appendix}

Here we prove two approximation lemma.

\begin{lem}\label{lem:1e} Let $\chi \in C_0^\infty(\R^+,[0,1])$, $\zeta \in C^\infty(\R^+,\R^+)$ and $f \in S^{-\delta}(\R \times \Ss)$ for some $\delta \in (0,1/2)$. Assume that $\eta \leq 1+y \zeta(y)$ for some $\eta \in (0,1)$ and define
\begin{equation*}
F_h(y,\w) = \chi(y) \left(f\left(\dfrac{y+y^2\zeta(y)}{h},\w\right) - f\left(\dfrac{y}{h},\w\right)\right), \ \ G_h(y,\w) = \chi(y)f\left(\dfrac{y+y^2\zeta(y)}{h},\w\right).
\end{equation*}
Then $|F_h|_{H^{1/2}} = O(h^\delta)$ and $|G_h|_{H^{-1/2}} = O(h^{\delta})$.
\end{lem}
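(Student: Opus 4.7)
My plan is to handle both estimates by a single mechanism: after the rescaling $y = hu$, the symbol decay of $f$ translates directly into negative powers of $h$, and the range $\delta < 1/2$ ensures the relevant moment integrals give the correct power. For $G_h$, I would first prove the stronger bound $|G_h|_{L^2} = O(h^\delta)$, from which $|G_h|_{H^{-1/2}} = O(h^\delta)$ follows by the continuous inclusion $L^2 \hookrightarrow H^{-1/2}$ (valid since $\supp_y G_h \subset \supp\chi$ is compact). The map $y \mapsto \tilde y := y + y^2\zeta(y)$ is a diffeomorphism on $\supp\chi$ by the hypothesis $1+y\zeta(y) \geq \eta$, and its Jacobian is bounded above and below on $\supp\chi$. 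Substituting $z = \tilde y/h$ and using $|f(z,\omega)|^2 \lesssim \langle z\rangle^{-2\delta}$ gives
\begin{equation*}
|G_h|_{L^2}^2 \lesssim h \int_\Ss \int_0^{C/h} \langle z\rangle^{-2\delta}\,dz\,d\omega \lesssim h \cdot h^{-(1-2\delta)} = h^{2\delta},
\end{equation*}
using $2\delta < 1$.

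For $F_h$ I would bound $|F_h|_{L^2}$ and $|F_h|_{H^1}$ separately by $O(h^\delta)$, then invoke real interpolation between $L^2 = H^0$ and $H^1$ to recover $|F_h|_{H^{1/2}} = O(h^\delta)$. The $L^2$ bound comes from the fundamental theorem of calculus, which yields
\begin{equation*}
|F_h(y,\omega)| \lesssim \chi(y)\,\frac{y^2}{h}\,\langle y/h\rangle^{-\delta-1},
\end{equation*}
after using that $(y+sy^2\zeta(y))/h \gtrsim y/h$ uniformly for $s \in [0,1]$. Squaring and substituting $y = hu$ gives
\begin{equation*}
|F_h|_{L^2}^2 \lesssim h^3 \int_0^{C/h} u^4(1+u)^{-2\delta-2}\,du \lesssim h^3 \cdot h^{-(3-2\delta)} = h^{2\delta},
\end{equation*}
valid because $3-2\delta > 0$.

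For $|F_h|_{H^1}$, differentiating in $y$ via the product rule splits $\partial_y F_h$ into three pieces: one containing $\chi'$, one of the form $(\chi/h)(\tilde y'(y)-1)\,\partial_x f(\tilde y/h,\omega)$, and one of the form $(\chi/h)[\partial_x f(\tilde y/h,\omega) - \partial_x f(y/h,\omega)]$. The first is structurally identical to $F_h$ itself and gives $O(h^\delta)$ directly; in the second, the factor $\tilde y'(y)-1 = 2y\zeta(y)+y^2\zeta'(y) = O(y)$ exactly compensates the $1/h$; in the third, one more application of the FTC produces an extra $y^2/h$ together with $\partial_x^2 f \in S^{-\delta-2}$. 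Each piece reduces after $y = hu$ to a moment integral of the form $h\int_0^{C/h} u^k(1+u)^{-2\delta-k}\,du$ with $k \in \{2,4\}$, all bounded by $h^{2\delta}$ when $\delta < 1/2$. Angular derivatives land on $f$ and leave it in $S^{-\delta}$, so they are handled by the same computation as the $L^2$ estimate. The main task—really bookkeeping rather than a genuine obstacle—is tracking that in every piece the positive power of $y$ produced (from Jacobian factors, from $\chi'$, or from the FTC remainder) cancels the negative power of $h$ coming out of $\partial_y$ or rescaling, so that each computation collapses to one of the universal moment integrals above.
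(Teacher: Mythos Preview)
Your proposal is correct and follows essentially the same route as the paper: for $F_h$ you bound $L^2$ and $H^1$ via the fundamental theorem of calculus and symbol decay, then interpolate, with the same three-term decomposition of $\partial_y F_h$. The one minor difference is that for $G_h$ you use the inclusion $L^2 \hookrightarrow H^{-1/2}$ (which already gives $O(h^\delta)$), whereas the paper passes through $L^{3/2} \hookrightarrow H^{-1/2}$; your route is slightly simpler and needs no compact-support justification since $H^0 \hookrightarrow H^{-1/2}$ holds unconditionally.
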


\begin{proof} We first note that derivatives with respect to $\w$ of $F_h$ are uniformly bounded in $h$ and thus not relevant. Without loss of generality we assume that $F_h$ does not depend on $\w$. We first prove a bound on $|F_h|_2$. Fix $A > 0$ such that $\supp(\chi)\subset [0,A]$. The substitution $y \mapsto y/h$ yields
\begin{equation*}\begin{gathered}
|F_h|_2^2 = \int_0^A \chi(y)^2 \left|f\left(\dfrac{y+y^2\zeta(y)}{h}\right) - f\left(\dfrac{y}{h}\right)\right|^2 dy \\
= h \int_0^{A/h} \chi(hy)^2 \left|f(y+hy^2\zeta(yh)) - f(y)\right|^2 dy \\ = h \int_0^{A/h} h^2 \chi(hy)^2 \zeta(hy)^2 y^4 \left|\int_0^1 f'(y+s h y^2\zeta(hy)) ds \right|^2 dy.
\end{gathered}
\end{equation*}
Note that for all $s \in [0,1]$ and $h,y > 0$,
\begin{equation*}
h(y+sh y^2 \zeta(hy)) \geq \min(hy,hy+(hy)^2\zeta(hy)) \geq \eta hy
\end{equation*}
and thus $y+ sh y^2 \zeta(hy) \geq \eta y$. In addition both $\chi$ and $\zeta$ are uniformly bounded and $f' \in S^{-1-\delta}$. Thus using the Cauchy-Schwartz inequality, the decay of $f'$ and the fact that $0 < \delta < 1/2 < 3/2$, 
\begin{equation*}
\begin{gathered}
|F_h|_2^2 \leq C h^3   \int_0^{A/h} \lr{y}^4 \lr{y}^{-2-2\delta} dy \\
\leq C h^3 \left|\left[lr{y}^{3-2\delta}\right]_0^{A/h}\right| \leq Ch^3 h^{-3+2\delta} = O(h^{2\delta}).
\end{gathered}
\end{equation*} 
This shows $|F_h|_2 = O(h^\delta)$. We now provide a bound on $|F'_h|_2$. Note that
\begin{equation}\label{eq:3q}\begin{gathered}
F_h'(y) = \chi'(y) \left(f\left(\dfrac{y+y^2\zeta(y)}{h}\right) - f\left(\dfrac{y+y^2\zeta(y)}{h}\right)\right) \\
+ \dfrac{\chi(y)}{h} \left(f'\left(\dfrac{y+y^2\zeta(y)}{h}\right) - f'\left(\dfrac{y}{h}\right)\right)
+ \dfrac{y\psi(y)}{h} f'\left(\dfrac{y+y^2\zeta(y)}{h}\right).
\end{gathered}
\end{equation}
where $\psi(y) = \chi(y)(2 \zeta(y) + y \zeta'(y))$. The first term in \eqref{eq:3q} is $O_{L^2}(h^\delta)$ by just replacing $\chi$ by $\chi'$ in the estimation of $|F_h|_2$. The term
\begin{equation}\label{eq:3p}
\chi(y) \left(f'\left(\dfrac{y+y^2\zeta(y)}{h}\right) - f'\left(\dfrac{y}{h}\right)\right)
\end{equation}
is exactly of the same form as $F_h$ with $f$ replaced by $f'$. As $f' \in S^{-1-\delta}$ and $1+\delta < 3/2$ the above calculation shows that the $L^2$-norm of \eqref{eq:3p} is $O(h^{1+\delta})$. Multiplying by the extra factor $h^{-1}$ yields that the $L^2$-norm of the second term in \eqref{eq:3q} is $O(h^\delta)$. For the third term in \eqref{eq:3q} we note that
\begin{equation*}\begin{gathered}
\int_0^A \left|\dfrac{y\psi(y)}{h} f'\left(\dfrac{y+y^2\zeta(y)}{h}\right)\right|^2 dy = h \int_0^{A/h} |y\psi(hy)|^2 |f'(y+hy^2\zeta(hy)|^2 dy \\
\leq C h \int_0^{A/h}   \lr{y}^{-2\delta} dy \leq Ch h^{-1+2\delta} = O(h^{2\delta})\end{gathered}
\end{equation*}
where here again we used $\delta < 1/2$. It follows that $|F_h'|_2 = O(h^\delta)$ and thus by interpolation $|F_h|_{H^{1/2}} = O(h^\delta)$.

We now estimate the $H^{-1/2}$ norm of $G_h$. As $L^{3/2} \hookrightarrow H^{-1/2}$ it suffices to show that $G_h = O_{3/2}(h^\delta)$. We have
\begin{equation*}
\begin{gathered}
|G_h|_{3/2}^{3/2} = h\int_0^{A/h} |f(y+hy^2\zeta(y),\w)|^{3/2} dy d\w \leq Ch \int_0^{A/h} \lr{y}^{-3\delta/2} dy  \\
\leq C h  \left|\left[lr{y}^{1-3\delta/2}\right]_0^{A/h}\right| \leq Ch h^{-1+3\delta/2} = O(h^{3\delta/2}).
\end{gathered}
\end{equation*}
Here again we used $\delta < 1/2$. This proves $|G_h|_{3/2} = O(h^\delta)$.\end{proof}

\begin{lem}\label{lem:1g} Let $\chi_0 \in C_0^\infty(\R)$ with $\chi_0 = 1$ near $0$; let $\rho \in C^\infty(\R)$ vanishing near $0$ and equal to $1$ near infinity. For $f \in S^{-\delta}, \delta < 1/2$ and $h \leq 1, \ell \geq 1$ we define 
\begin{equation*}
F_{\ell,h}(x,\w) = \chi_0(x) \rho \left(\dfrac{x}{\ell h}\right) f\left(\dfrac{
x}{h},\w \right).
\end{equation*}
Then $|F_{\ell,h}|_{H^{1/2}} = O(h^{\delta/2}) + O(\ell^{-\delta/2})$ uniformly in $h \leq 1, \ell \geq 1$.
\end{lem}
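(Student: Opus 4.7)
The function $F_{\ell,h}$ is compactly supported: since $\chi_0$ has support in some $[-A,A]$ and $\rho$ vanishes on $\{|y|\le c\}$ for some $c>0$, the support of $F_{\ell,h}$ lies in $\{c h\ell \le |x| \le A\}$, and on this set $|x/h|\ge c\ell$, which gives the pointwise bound $|f(x/h,\w)|\lesssim \lr{x/h}^{-\delta}$. The first step is the $L^2$ estimate
\[
|F_{\ell,h}|_{L^2}^2 \lesssim \min(h^{2\delta},\ell^{-2\delta}),
\]
obtained by integrating the pointwise bound either as $(x/h)^{-2\delta}$ over $[ch\ell,A]$ (which is integrable at $0$ since $\delta<1/2$, producing the $h^{2\delta}$ bound) or as $\ell^{-2\delta}$ uniformly on the compact support (producing the $\ell^{-2\delta}$ bound). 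In particular $|F_{\ell,h}|_{L^2}\lesssim h^\delta+\ell^{-\delta}$.

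The $\dot H^{1/2}$ estimate is the actual difficulty and will be the main obstacle. A naive interpolation $|\cdot|_{H^{1/2}}^2 \le |\cdot|_{L^2}\,|\cdot|_{H^1}$ in the spirit of Lemma~\ref{lem:1e} fails, because the derivative $\rho'(x/(h\ell))/(h\ell)$ has amplitude $1/(h\ell)$ on a set of width $\sim h\ell$, producing a factor of $(h\ell)^{-1/2}$ in $|F_{\ell,h}|_{H^1}$ that is uncontrolled as $h\ell\to 0$. The plan is to bypass this by a dyadic decomposition in $y=x/h$: pick $\eta\in C_c^\infty((1/2,2))$ with $\sum_{k\in\Z}\eta(\cdot/2^k)\equiv 1$ on $(0,\infty)$, and set $F_k(x,\w)=F_{\ell,h}(x,\w)\,\eta(x/(2^kh))$, so only $k$ with $2^k\in[c\ell/2,\,2A/h]$ contribute. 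On $\supp F_k$ one has $|F_k|_\infty\lesssim 2^{-k\delta}$, and the three contributions to $\partial_x F_k$, namely $f'(x/h)/h$, $\rho'(x/(h\ell))/(h\ell)$ (which is confined to $2^k\sim\ell$, where it has size $\sim 2^{-k(\delta+1)}/h$), and the partition-of-unity term $\eta'/(2^kh)$, all satisfy $|\partial_x F_k|_\infty\lesssim 2^{-k\delta}+2^{-k(\delta+1)}/h$. Since $|\supp F_k|\sim 2^kh$, this yields
\[
|F_k|_{L^2}^2 \lesssim 2^{k(1-2\delta)}\, h, \qquad |F_k|_{\dot H^1}^2 \lesssim 2^{k(1-2\delta)}\, h+2^{-k(1+2\delta)}/h.
\]
The one-dimensional interpolation $|u|_{\dot H^{1/2}}^2\le |u|_{L^2}\,|u|_{\dot H^1}$ then produces $|F_k|_{\dot H^{1/2}}\lesssim 2^{-k\delta}+h^{1/2}2^{k(1-2\delta)/2}$, with the dangerous factor $1/(h\ell)$ fully absorbed into the narrow dyadic support.

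Summing over $k$ by the triangle inequality $|F_{\ell,h}|_{\dot H^{1/2}}\le \sum_k |F_k|_{\dot H^{1/2}}$: the series $\sum_k 2^{-k\delta}$ is geometric and dominated by its smallest index $k_{\min}\sim\log_2(c\ell)$, yielding $\lesssim\ell^{-\delta}$; the series $\sum_k h^{1/2}2^{k(1-2\delta)/2}$ has positive ratio (since $\delta<1/2$) and is dominated by its largest index $k_{\max}\sim\log_2(A/h)$, yielding $\lesssim (A/h)^{(1-2\delta)/2}\,h^{1/2}=Ch^\delta$. Combined with the $L^2$ estimate this gives $|F_{\ell,h}|_{H^{1/2}}\lesssim h^\delta+\ell^{-\delta}$, which is stronger than and hence implies the claim, because $h\le 1$ and $\ell\ge 1$ force $h^\delta\le h^{\delta/2}$ and $\ell^{-\delta}\le\ell^{-\delta/2}$. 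Finally, the angular variable needs no separate argument: $f\in S^{-\delta}$ means $\Delta_\Ss^\beta f$ obeys the same $\lr{y}^{-\delta}$ decay, so applying $\Delta_\Ss^{1/4}$ to $F_{\ell,h}$ preserves all the estimates above and controls the $L^2_x H^{1/2}_\w$ component of the full $H^{1/2}(\R\times\Ss)$ norm.
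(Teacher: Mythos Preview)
Your proof is correct and takes a genuinely different route from the paper's. The paper works on the Fourier side: after rescaling $x\mapsto hx$ it writes $\||D_x|^{1/2}F_{\ell,h}\|_2^2=\int|\xi|\,|\hat G(\xi)|^2\,d\xi$ with $G(y)=\chi_0(hy)\rho(y/\ell)f(y)$, dismisses small $\xi$ as harmless, and for $|\xi|\ge 1$ integrates by parts twice to reduce to the estimate $\|G''\|_{L^1}=O(h^\delta+\ell^{-\delta})$, which follows because each derivative landing on $\chi_0(h\cdot)$, $\rho(\cdot/\ell)$, or $f$ produces either localization to $|y|\sim h^{-1}$, localization to $|y|\sim\ell$, or extra symbol decay. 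You instead stay in physical space, perform a dyadic decomposition in $y=x/h$, interpolate $\dot H^{1/2}$ between $L^2$ and $\dot H^1$ on each piece, and sum the resulting geometric series.

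Your argument is somewhat longer but more robust: it never needs the low-frequency part to be treated separately (the paper's one-line dismissal of $|\xi|\le 1$ is actually the least obvious step, since $\|G\|_{L^2}$ blows up like $h^{-(1-2\delta)/2}$), and it delivers the sharper bound $O(h^\delta+\ell^{-\delta})$ rather than the stated $O(h^{\delta/2}+\ell^{-\delta/2})$. The paper's approach is shorter once the small-$\xi$ issue is granted, and makes transparent that the whole estimate boils down to an $L^1$ bound on two derivatives of a symbol. One minor omission on your side: $\partial_x F_k$ has a fourth contribution from $\chi_0'(x)$, but it is bounded by $2^{-k\delta}$ and already absorbed into your stated estimate.
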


\begin{proof} Here again the angular variable plays no role and we assume that $f$ does not depend on $\w$. We start by getting an estimate on $|F_{\ell,h}|_2$. The function $F_{\ell,h}$ has compact support independent of $\ell,h$ and thus by Holder's inequality $|F_{\ell,h}|_2 \leq C|F_{\ell,h}|_{2/\delta}$. It follows that
\begin{equation*}\begin{gathered}
|F_{\ell,h}|_2 \leq C \left(\int_{\R} \left|\chi_0(x) \rho\left(\dfrac{x}{\ell h}\right) f\left(\dfrac{x}{h}\right)\right|^{2/\delta} dx\right)^{\delta/2} \\
\leq C h^{\delta/2} \int_\R f(x)^{2/\delta} dx = O(h^{2/\delta}).\end{gathered}
\end{equation*}
We now prove an estimate on $||D_x|^{1/2}F_{\ell,h}|_2$. We have
\begin{equation*}\begin{gathered}
2\pi||D_x|^{1/2}F_{\ell,h}|_2^2 = \int_\R |\xi| \left| \int_\R e^{-ix\xi}\chi_0(x) \rho\left(\dfrac{x}{\ell h}\right) f\left(\dfrac{x}{h}\right) dx \right|^2 d\xi \\
= \int_\R |\xi| \left| \int_\R e^{-ix\xi}\chi_0(h x) \rho\left(x/\ell\right) f(x) dx \right|^2 d\xi.\end{gathered}
\end{equation*}
Small values of $\xi$ do not cause any trouble and we focus on $\xi$ away from $0$. Integration by parts yields
\begin{equation*}\begin{gathered}
\int_{|\xi| \geq 1} |\xi| \left| \int_\R e^{-ix\xi}\chi_0(h x) \rho\left(x/\ell\right) f(x) dx \right|^2 d\xi \\
 = \int_{|\xi| \geq 1} \dfrac{d\xi}{|\xi|^3}\left| \int_\R e^{-ix\xi} \p_x^2\left(\chi_0(h x) \rho\left(x/\ell\right) f(x)\right) dx \right|^2  \leq \left(\int_\R \left| \p_x^2\left(\chi_0(h x) \rho(x/\ell) f(x) \right)\right| dx\right)^2.\end{gathered}
\end{equation*}
We now estimate this last term as follows. Derivatives carried on $\chi_0(h\cdot)$ (resp. $\rho(\cdot/\ell)$) create a factor $h$ (resp. a factor $\ell^{-1}$). The functions $\chi_0'(h\cdot), \chi_0''(h\cdot)$ (resp. $\rho(\cdot/\ell), \rho'(\cdot/\ell), \rho''(\cdot/\ell)$) are supported in the annulus $x \sim h^{-1}$ (resp. $x \sim \ell$). Derivatives carried on $f$ create additional decay: $f' \in S^{-1-\delta} \subset L^1$. All these observations put together show that
\begin{equation*}
\left| \p_x^2 \chi_0(h \cdot) \rho(\cdot/\ell) f \right|_1 = O(\ell^{-\delta}) + O(h^\delta).
\end{equation*} 
It finally follows that $|F_{\ell,h}|_{H^{1/2}} = O(h^{\delta/2} + \ell^{-\delta})$ and the lemma follows. \end{proof}

\end{document}